\documentclass[12 pt]{amsart}
\usepackage[pdftex]{graphicx}
\usepackage{amsmath}
\usepackage{mathtools}
\usepackage{amsthm}
\usepackage{amsfonts}
\usepackage{amssymb}
\usepackage[margin=1.0in]{geometry}
\usepackage{tikz}
\usepackage{enumitem}
\usepackage{url}

\newtheorem{theorem}{Theorem}[section]
\newtheorem*{theorem*}{Theorem}

\newtheorem{lemma}[theorem]{Lemma}
\newtheorem{prop}[theorem]{Proposition}
\newtheorem{que}[theorem]{Question}

\newtheorem{conj}[theorem]{Conjecture}

\theoremstyle{definition}
\newtheorem{defin}[theorem]{Definition}
\newtheorem{fact}[theorem]{Fact}

\theoremstyle{remark}
\newtheorem*{rem}{Remark}

\newcommand{\flim}[1]{\mathrm{Flim}(#1)}
\newcommand{\age}[1]{\mathrm{Age}(#1)}
\newcommand{\fin}[1]{\mathrm{Fin}(#1)}
\newcommand{\fr}{Fra\"iss\'e }
\renewcommand{\phi}{\varphi}

\newcommand{\emb}[1]{\mathrm{Emb}(#1)}
\newcommand{\aut}[1]{\mathrm{Aut}(#1)}
\newcommand{\homeo}[1]{\mathrm{Homeo}(#1)}

\newcommand{\ran}[1]{\mathrm{ran}(#1)}

\newcommand{\cupdots}{\cup\cdots\cup}
\newcommand{\hatf}{\,\hat{\rule{-0.5ex}{1.5ex}\smash{f}}}

\newcommand{\tildei}[2]{\tilde{\imath}_{#1}^{#2}}

\begin{document}
\title[Fra\"iss\'e structures and a conjecture of Furstenberg]{Fra\"iss\'e structures and a conjecture of Furstenberg}
\author[D. Barto\v{s}ov\'a and A. Zucker]{Dana Barto\v{s}ov\'a and Andy Zucker}
\maketitle

\begin{abstract}
	We study problems concerning the Samuel compactification of the automorphism group of a countable first-order structure. A key motivating question is a problem of Furstenberg and a counter-conjecture by Pestov regarding the difference between $S(G)$, the Samuel compactification, and $E(M(G))$, the enveloping semigroup of the universal minimal flow. We resolve Furstenberg's problem for several automorphism groups and give a detailed study in the case of $G= S_\infty$, leading us to define and investigate several new types of ultrafilter on a countable set.
\end{abstract}

\section{Introduction}

\let\thefootnote\relax\footnote{A.\ Zucker was supported by NSF Grants no.\ DGE 1252522 and DMS 1803489.}
In this paper, we are interested in the automorphism groups of countable first-order structures and the Samuel compactifications of these groups. We will address a variety of questions about the algebraic structure of the Samuel compactification and exhibit connections between this algebraic structure and the combinatorics of the first-order structures at hand.

Let $G$ be a topological group; all topological groups and spaces will be assumed Hausdorff. The group $G$ comes with a natural uniform structure, the \emph{left uniformity}, whose entourages are of the form $\{(g,h)\in G\times G: g^{-1}h\in V\}$ where $V$ ranges over open symmetric neighborhoods of the identity. Every uniform space $U$ admits a \emph{Samuel compactification}, the Gelfand space of the algebra of bounded uniformly continuous functions on $U$ (see [Sa] or [U]). We denote by $S(G)$ the Samuel compactification of the group $G$ with its left uniform structure.

In addition to being a compact Hausdorff space, the space $S(G)$ can also be endowed with a $G$-flow structure. A \emph{$G$-flow} is a compact Hausdorff space $X$ equipped with a continuous right $G$-action $a: X\times G\rightarrow X$. Typically the action $a$ is understood, and we write $x\cdot g$ or $xg$ for $a(x,g)$. We can give $S(G)$ the structure of a $G$-flow; indeed, for each $g\in G$, the right-multiplication map $h\rightarrow hg$ is left-uniformly continuous, so can be continuously extended to $S(G)$. With some extra work, it can be shown that the evaluation $S(G)\times G\rightarrow S(G)$ is continuous. 

If $X$ and $Y$ are $G$-flows, a \emph{$G$-map} is a continuous map $\phi: X\rightarrow Y$ which respects the $G$-action. A \emph{$G$-ambit} is a pair $(X, x_0)$, where $X$ is a $G$-flow and $x_0\in X$ has a dense orbit. If $(X, x_0)$ and $(Y, y_0)$ are ambits, then a \emph{map of ambits} is a $G$-map $\phi: X\rightarrow Y$ with $\phi(x_0) = y_0$. Notice that there is at most one map of ambits from $(X, x_0)$ to $(Y, y_0)$. By identifying $G$ as embedded into $S(G)$ and by considering the orbit of $1_G$, we turn $(S(G), 1_G)$ into an ambit. It turns out that this is the \emph{greatest ambit}; for any $G$-ambit $(X, x_0)$, there is a map of ambits $\phi: (S(G), 1_G)\rightarrow (X,x_0)$.

We can use this universal property to endow $S(G)$ with yet more structure. A \emph{compact left-topological semigroup} is a semigroup $S$ with a compact Hausdorff topology in which the left multiplication maps $t\rightarrow st$ are continuous for each $s\in S$. Now let $x\in S(G)$; then the pair $(\overline{x\cdot G}, x)$ is a $G$-ambit, so there is a unique $G$-map $\lambda_x: S(G)\rightarrow \overline{x\cdot G}$ with $\lambda_x(1_G) = x$. We can endow $S(G)$ with the structure of a compact left-topological semigroup by setting $xy:= \lambda_x(y)$. It is not hard to show that this operation is associative.

Another consequence of the universal property is the existence of universal minimal flows. Let $X$ be a $G$-flow. A \emph{subflow} is any closed non-empty $Y\subseteq X$ which is invariant under the $G$-action. The $G$-flow $X$ is \emph{minimal} if $X\neq\emptyset$ and every orbit is dense; equivalently, $X$ is minimal if $X$ contains no proper subflows. An easy Zorn's Lemma argument shows that every flow contains a minimal subflow.  The $G$-flow $X$ is \emph{universal} if there is a $G$-map from $X$ onto any minimal flow. Let $M\subseteq S(G)$ be any minimal subflow, and let $Y$ be a minimal flow. Pick $y_0\in Y$ arbitrarily, making $(Y, y_0)$ an ambit. Then there is a map of ambits $\phi: (S(G), 1_G)\rightarrow (Y, y_0)$, and $\phi|_M: M\rightarrow Y$ is a $G$-map. We have just shown the existence of a \emph{universal minimal flow}. By using some techniques from the theory of compact left-topological semigroups, it can be shown that there is a unique universal minimal flow up to $G$-flow isomorphism, denoted $M(G)$.

The existence and uniqueness of the universal minimal flow suggests another ``canonical'' $G$-ambit we can construct. If $X$ is a $G$-flow, we can view each $g\in G$ as the function $\rho_g: X\rightarrow X$. Form the product space $X^X$, and set $E(X) = \overline{\{\rho_g: g\in G\}}$. It will be useful in this instance to write functions on the right, so if $f\in X^X$, we write $x\cdot f$ or $xf$ instead of $f(x)$. The group $G$ acts on $E(X)$ via $x\cdot(f\cdot g) = (x\cdot f)\cdot g$. Notice that $\rho_g\cdot h = \rho_{gh}$, so $(E(X), \rho_{1_G})$ is an ambit. We can also give $E(X)$ a compact left-topological semigroup structure; rather than a universal property, it is the fact that members of $E(X)$ are functions that allows us to do this. Indeed, the product is given by composition, which with our notation means that for $f_1, f_2\in E(X)$, we define $x\cdot(f_1\cdot f_2) = (x\cdot f_1)\cdot f_2$. The ambit $E(X)$ (the distinguished point being understood) is called the \emph{enveloping semigroup} of $X$; we will be particularly interested in $E(M(G))$, the enveloping semigroup of the universal minimal flow.

It is worth pointing out that we could have avoided some of this notational awkwardness by switching the roles of left and right throughout, i.e.\ working with \emph{left} $G$-actions and compact \emph{right}-topological semigroups. The reason we work with our left-right conventions is due to the specific groups that we will be working with, i.e.\ automorphism groups of countable first-order structures. We will point out later how a left-right switch could be made. Also note that several of the references use the opposite left-right conventions, in particular [HS] and [Ba]. 

Robert Ellis (see [E]) first proved the existence and uniqueness of $M(G)$, and was the first to construct the enveloping semigroup of a flow. Upon considering the two canonical ambits $S(G)$ and $E(M(G))$, we see that there is a map of ambits $\phi: S(G)\rightarrow E(M(G))$ (when referring to $S(G)$ and enveloping semigroups, we will suppress the distinguished point unless there is possible confusion). Historically, the following very natural question had been attributed to Ellis: is $\phi: S(G)\rightarrow E(M(G))$ an isomorphism? Vladimir Pestov (see [P]) observed that the existence of non-trivial \emph{extremely amenable} groups, groups where $M(G)$ is a singleton, provides a negative answer to Ellis's question. Pestov also constructed many other examples of groups $G$ where $S(G)$ and $E(M(G))$ were not isomorphic. The diversity of counterexamples to Ellis's question led Pestov to make the following conjecture.

\begin{conj}[\cite{P}, p.\ 4163]
\label{PestovCon}
Let $G$ be a topological group. Then the canonical map $\phi: S(G)\rightarrow E(M(G))$ is an isomorphism iff $G$ is precompact. 
\end{conj}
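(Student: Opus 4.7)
The plan is to split the biconditional in Conjecture~\ref{PestovCon} into its two directions and handle them separately.

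\emph{The easy direction: $G$ precompact $\Rightarrow$ $\phi$ is an isomorphism.} If $G$ is precompact then its left uniformity is totally bounded and the left completion $\bar G$ is a compact topological group containing $G$ densely. Every bounded left-uniformly continuous function on $G$ extends uniquely to $\bar G$, so $S(G)=\bar G$. Right translation makes $\bar G$ into a minimal equicontinuous $G$-flow, and any minimal subflow of $S(G)=\bar G$ must equal $\bar G$ itself, giving $M(G)=\bar G$. For an equicontinuous minimal flow, the closure of $\{\rho_g:g\in G\}$ in $\bar G^{\bar G}$ is just $\bar G$ again acting by right translation, so $E(M(G))=\bar G$ and the canonical map $\phi$ is the identity on $\bar G$.

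\emph{The hard direction: $G$ not precompact $\Rightarrow$ $\phi$ is not an isomorphism.} I would try to produce distinct points of $S(G)$ inducing the same self-map of $M(G)$. Fix a minimal left ideal $L\subseteq S(G)$ and a minimal idempotent $u\in L$; then $L\cong M(G)$ as $G$-flows, and $\phi$ sends $p\in S(G)$ to the function $x\mapsto xp$ on $L$. By non-precompactness I can fix a symmetric neighborhood $V$ of $1_G$ admitting no finite left cover of $G$ and pick a left-$V$-separated sequence $\{g_n:n\in\omega\}\subseteq G$. Distinct ultrafilter limits of the $g_n$ yield distinct points $p\neq q$ in $S(G)$ (the separation furnishes bounded left-uniformly continuous functions distinguishing them); the goal is to choose the $g_n$ carefully enough that $p$ and $q$ act identically on the comparatively ``small'' flow $M(G)$, forcing $\phi(p)=\phi(q)$.

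\emph{Main obstacle.} The delicate step is producing a discrete sequence whose ultrafilter limits collapse when restricted to $M(G)$. When $M(G)$ is trivial, i.e., when $G$ is extremely amenable, there is nothing to check, recovering Pestov's original observation. For general non-precompact $G$ this requires detailed information about $M(G)$, which is only accessible in special situations. For automorphism groups of Fra\"iss\'e structures the Kechris--Pestov--Todor\v{c}evi\'c correspondence presents $M(G)$ combinatorially via a Ramsey expansion, and one can hope to use Ramsey-type pigeonholing to select a sequence $\{g_n\}$ whose images in $M(G)$ converge along every ultrafilter to the same point. I expect this strategy to settle the conjecture for large families of automorphism groups but to fall short of the full statement, since nothing like KPT duality is available for arbitrary topological groups and the conjecture in its stated generality appears to be out of reach by this approach.
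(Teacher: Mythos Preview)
The statement you are addressing is a \emph{conjecture}, not a theorem; the paper does not prove it, and it remains open in the stated generality. Your easy direction is correct and matches the paper's one-line remark that for precompact $G$ all of $S(G)$, $M(G)$, and $E(M(G))$ coincide with the left completion. Your own concluding paragraph correctly acknowledges that the hard direction is out of reach by your method, so there is no genuine gap to flag: you are not claiming a full proof.

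Where a comparison is meaningful is in the special cases the paper \emph{does} resolve (Theorem~\ref{PestovSolution}), and there your proposed mechanism differs substantially from the paper's. You suggest selecting a left-$V$-separated sequence $(g_n)$ and using Ramsey pigeonholing so that distinct ultrafilter limits act identically on $M(G)$. The paper never constructs such a sequence. Instead it invokes Pestov's reformulation that $\phi$ is an isomorphism iff retractions onto a minimal $M\subseteq S(G)$ separate points, converts this into the question of whether a certain Boolean algebra $B_m'\subseteq\mathcal P(H_m)$ generated by ``minimal'' subsets exhausts $\mathcal P(H_m)$ (Lemmas~\ref{MinimalSeparate} and~\ref{MinimalBA}), identifies the minimal subsets concretely via the KPT expansion (Proposition~\ref{Minimal}), and then runs a \emph{counting} argument: a Boolean combination of $k$ minimal sets can realize only about $(N!)^k$ many $N$-patterns, whereas a set $T\subseteq H_m$ with dense orbit realizes all $2^{\Theta(N^m)}$ of them. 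This indirect counting route sidesteps the need to exhibit any particular pair $p\neq q$ with $\phi(p)=\phi(q)$; your approach would require producing such a pair explicitly, which is harder to carry out even for $S_\infty$.

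One minor technical slip: in the paper's conventions (right $G$-flows, compact \emph{left}-topological semigroup $S(G)$), the copies of $M(G)$ inside $S(G)$ are the minimal \emph{right} ideals, not the minimal left ideals as you wrote.
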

\vspace{2 mm}

Here, $G$ is said to be \emph{precompact} if the completion of its left uniformity is compact. If this is the case, then all of $S(G)$,  $M(G)$, and $E(M(G))$ are isomorphic to the left completion. Aside from the initial work of Pestov, most work done on Conjecture \ref{PestovCon} has been directed towards discrete groups. Glasner and Weiss in \cite{GW1} show that $S(\mathbb{Z})$ and $E(M(\mathbb{Z}))$ are not isomorphic. Much more recently, Glasner and Weiss in \cite{GW2} isolate a class of countable discrete groups they call \emph{DJ groups}. They verify Conjecture \ref{PestovCon} for every DJ group and show that many groups are DJ groups, including amenable groups and residually finite groups. They also assert that the attribution of the original question to Ellis is mistaken, and that instead Furstenberg conjectured a statement equivalent to Conjecture \ref{PestovCon} for $\mathbb{Z}$ in \cite{F} (see part III of \cite{F}). Hence our reference to Furstenberg in the title. It is unknown whether any countable discrete group is not DJ. 

In this paper, we address Conjecture \ref{PestovCon} for groups of the form $G = \aut{\mathbf{K}}$ where $\mathbf{K}$ is a countable first-order structure. We endow $G$ with the topology of pointwise convergence, turning $G$ into a Polish group. In a mild abuse of terminology, we will call groups of this form \emph{automorphism groups}. When $\mathbf{K}$ is a countable set with no additional structure, we have $\aut{\mathbf{K}} = S_\infty$, the group of all permutations of a countable set. More generally, automorphism groups are exactly the closed subgroups of $S_\infty$. The work of Kechris, Pestov, and Todorcevic [KPT] provides explicit computations of $M(G)$ for many automorphism groups. Having an explicit representation of $M(G)$ aids in analyzing the properties of $E(M(G))$. Along with an explicit representation of $S(G)$ for automorphism groups (see [Z]), this allows us to address Conjecture \ref{PestovCon} for some of these groups. Our first main theorem is the following.

\begin{theorem}
\label{PestovSolution}
Let $\mathbf{K}$ be any of the following:
\begin{itemize}
\item
a countable set without structure,
\item
the random $K_n$-free graph,
\item
the random $r$-uniform hypergraph.
\end{itemize}
Then for $G = \aut{\mathbf{K}}$, we have $S(G)\not\cong E(M(G))$.
\end{theorem}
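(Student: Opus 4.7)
The goal is to prove that $\phi\colon S(G)\to E(M(G))$ is not injective; surjectivity of $\phi$ is automatic (its image is a closed $G$-invariant subset of $E(M(G))$ containing $\{\rho_g : g\in G\}$), so this suffices. Two points $p,q\in S(G)$ satisfy $\phi(p)=\phi(q)$ precisely when $x\cdot p = x\cdot q$ in $M(G)$ for every $x\in M(G)$. I combine Zucker's description of $S(G)$ as the inverse limit $\varprojlim_A \beta(\mathbf{K}^A)$---where $A$ ranges over finite substructures of $\mathbf{K}$ and $\mathbf{K}^A$ denotes the set of embeddings $A\hookrightarrow\mathbf{K}$---with the Kechris--Pestov--Todorcevic computation of $M(G)$: in each of the three cases, $M(G)$ is the space of all linear orders on the vertex set of $\mathbf{K}$. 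Under this identification, a point $p\in S(G)$ is a compatible family $(p_A)_A$ of ultrafilters on $\mathbf{K}^A$, and the action on an order $<_x\in M(G)$ depends only on its pair-projections: $i\,(<_x\cdot p)\,j$ iff $\{f\in\mathbf{K}^{\{i,j\}} : f(i) <_x f(j)\}\in p_{\{i,j\}}$.

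Consequently, $\phi(p)=\phi(q)$ reduces to the condition that for every pair $\{i,j\}$ the ultrafilters $p_{\{i,j\}}$ and $q_{\{i,j\}}$ agree on every ``order set'' $\{f : f(i) <_x f(j)\}$ as $<_x$ ranges over $M(G)$. The key observation is that the order sets on $\mathbf{K}^{\{i,j\}}$ do \emph{not} generate the full power set as a Boolean algebra: any finite Boolean combination of $k$ order sets sorts the pairs of distinct vertices into only $2^k$ classes, so no singleton (and no finite set) lies in the generated algebra, while ultrafilters over a proper sub-algebra admit many lifts. Hence distinct ``order-equivalent'' ultrafilters on $\mathbf{K}^{\{i,j\}}$ exist in abundance. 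I would construct such a pair explicitly for $G=S_\infty$ (where $\mathbf{K}^{\{i,j\}}$ identifies with $\omega^2\setminus\Delta$), and then, by induction on $|A|$ using the Fra\"iss\'e extension property of $\mathbf{K}$, lift the pair to a pair of distinct compatible families $(p_A)$ and $(q_A)$ that remain order-equivalent at every pair appearing in any $A$. This yields $p\neq q$ with $\phi(p)=\phi(q)$, hence $S(G)\not\cong E(M(G))$.

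The main obstacle is the explicit construction, for $S_\infty$, of a concrete pair of order-equivalent distinct ultrafilters with enough combinatorial structure to admit coherent extensions through the inverse system. This is where I expect to need a tailored class of ultrafilters on $\omega$---the ``new types of ultrafilter'' alluded to in the abstract---engineered so that their Fubini-type composites on $\omega^2\setminus\Delta$ remain distinct but become indistinguishable after projection onto antisymmetric (order) sets. Once this is accomplished for $S_\infty$, the argument for $\aut{\mathbf{K}}$ with $\mathbf{K}$ the random $K_n$-free graph or the random $r$-uniform hypergraph runs along identical lines, since $M(G)$ is still a space of linear orders on the vertex set; the only additional work is verifying that the ultrafilters can be chosen to concentrate on genuine embeddings (tuples avoiding the forbidden configurations), which the Fra\"iss\'e extension property makes routine.
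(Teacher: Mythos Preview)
Your high-level strategy is correct and is essentially equivalent to the paper's: the paper uses Pestov's characterization that $\phi$ is an isomorphism iff retractions onto a minimal $M\subseteq S(G)$ separate points, and then shows via Lemmas~4.1--4.2 that this fails iff some set $T\subseteq H_m$ lies outside the Boolean algebra $B_m'$ generated (after lifting to higher levels) by minimal sets. Since minimal sets here are exactly your ``order sets'' (Proposition~4.3), the two frameworks coincide.

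However, there is a genuine gap in your counting step. Your observation that $k$ order sets on $\mathbf{K}^{\{i,j\}}$ cut $H_2$ into at most $2^k$ classes shows only that the Boolean algebra $B_2$ generated by order sets \emph{at level~2} is proper. What you actually need is that some $T\subseteq H_2$ has the property that for \emph{every} $n\geq 2$, the lift $i_2^n(T)\subseteq H_n$ is not a Boolean combination of order sets \emph{at level $n$}. This is the algebra the paper calls $B_2'$, and it is strictly larger than $B_2$: a set that is not order-definable on pairs may become order-definable after passing to embeddings of a larger structure. Your inductive lifting ``using the Fra\"iss\'e extension property'' does not address this, because coherence through the inverse system forces agreement on $B_n$ at every level $n$, not merely on $B_2$. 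The paper closes this gap with an asymptotic count of \emph{$N$-patterns}: a generic $T\subseteq H_m$ realises roughly $2^{N^m}$ patterns on $N$ points, while any Boolean combination of $k$ minimal sets at level $n$ realises at most about $(N!)^k$ patterns, and the former dominates for $N\gg n$.

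Two smaller corrections. First, the ``new types of ultrafilter'' in the abstract are used for Theorems~1.3 and~1.4 (separating points inside $K(S(S_\infty))$), not for Theorem~1.2; the present theorem needs no special ultrafilters, only the counting argument. Second, the hypergraph and $K_n$-free cases are not quite identical to $S_\infty$: the pattern count on the ``generic $T$'' side requires embedding a large structure $\mathbf{A}_N$ into a controlled $\mathbf{A}_{N'}$ that extends every copy of an edge to a copy of $\mathbf{A}_n$, with $N'$ polynomial in $N$. The paper obtains these efficient extensions via Baranyai's theorem (Lemma~4.6) and the Ajtai--Koml\'os--Szemer\'edi bound on off-diagonal Ramsey numbers (Lemma~4.8); the bare extension property gives no size control and would not suffice.
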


It is interesting to note that the methods here and the methods from \cite{GW2} are orthogonal in some sense; our methods only work if $M(G)$ is metrizable, and topological groups with $M(G)$ metrizable never have property DJ (see Remark 1.2 from \cite{GW2}).

We then turn to finding the extent to which $S(G)$ and $E(M(G))$ differ. Any minimal subflow $M\subseteq S(G)$ is isomorphic to $M(G)$, and it turns out that $S(G)$ admits a \emph{retraction} onto $M$, i.e.\ a $G$-map $\phi: S(G)\rightarrow M$ with $\phi|_M$ the identity. Pestov has shown (see [P1]) that $S(G)\cong E(M(G))$ iff the retractions of $S(G)$ onto a minimal subflow $M\subseteq S(G)$ separate the points of $S(G)$. So if $S(G)\not\cong E(M(G))$, it makes sense to ask which pairs of points cannot be separated; this will not depend on the choice of minimal subflow $M\subseteq S(G)$. Given $x,y\in S(G)$, we say they can be \emph{separated by retractions} if there is a retraction $\phi: S(G)\rightarrow M$ with $\phi(x)\neq\phi(y)$. 

Every compact left-topological semigroup $S$ admits a smallest two-sided ideal, denoted $K(S)$. Our second main theorem is the following.
\vspace{3 mm}

\begin{theorem}
\label{PointsSmallestIdeal}
There are $x\neq y\in K(S(S_\infty))$ which cannot be separated by retractions.
\end{theorem}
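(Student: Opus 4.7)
The plan is to reduce the theorem to a concrete combinatorial construction via the characterization of separation by retractions. Any retraction $\phi : S(G) \to M$ onto a minimal subflow $M$ has the form $\phi(z) = uz$ for a unique idempotent $u \in M$: continuity and $G$-equivariance (using that the left multiplications $\lambda_s$ are precisely the $G$-maps in our conventions) force $\phi(z) = \phi(1_G) \cdot z$, and the retraction condition $\phi|_M = \mathrm{id}_M$ forces $\phi(1_G)$ to be a left identity on $M$, hence idempotent. Conversely, every minimal idempotent $u$ gives a retraction $\phi_u$ onto the minimal right ideal $uS(G)$. Thus $x,y \in S(G)$ cannot be separated by retractions if and only if $ux = uy$ for every idempotent $u \in K(S(G))$.

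With this in hand, the task becomes the construction of distinct $x, y \in K(S(S_\infty))$ with $ux = uy$ for every idempotent $u \in K(S(S_\infty))$. A first observation rules out one case: if $x, y$ lie in the same minimal right ideal $M_0$ and $u \in M_0$ is any idempotent, then $u$ is a left identity on $M_0$, giving $ux = x$ and $uy = y$, which forces $x = y$. Consequently, any witnessing pair must have $x \in M_1$, $y \in M_2$ with $M_1 \neq M_2$, and must satisfy three constraints: $uy = x$ for every idempotent $u \in M_1$; $ux = y$ for every idempotent $u \in M_2$; and $ux = uy$ for every idempotent $u$ lying in any third minimal right ideal.

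To produce $x, y$ I would rely on the concrete description of $S(S_\infty)$ from [Z] as an inverse limit of Stone--\v{C}ech compactifications $\beta \,\mathrm{Inj}(A, \mathbb{N})$ over finite $A \subseteq \mathbb{N}$, together with the identification $M(S_\infty) \cong \mathrm{LO}(\mathbb{N})$ from [KPT]. Under this description, $K(S(S_\infty))$ consists of coherent ultrafilter systems concentrating on injections compatible with fixed linear orderings of $\mathbb{N}$. The idea is to use the new ultrafilter notions on a countable set introduced in the body of the paper to assemble ``twin'' elements $x \in M_1, y \in M_2$ whose coherent data is distinct but whose images under left multiplication by any minimal idempotent collapse to the same coherent system, because that left multiplication sees only certain equivalence-class data on the underlying ultrafilters that has been arranged to agree.

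The main obstacle is the uniform verification of the three conditions over all minimal idempotents. This requires a detailed analysis of how the semigroup product in $S(S_\infty)$ transports coherent ultrafilter systems across minimal right ideals, and it is here that the new ultrafilter notions enter in an essential way: they isolate exactly the invariance that forces $ux = uy$ for idempotents lying outside $M_1 \cup M_2$ while still allowing $M_1 \neq M_2$ and $x \neq y$. Phrased via the canonical map $p : S(S_\infty) \to E(\mathrm{LO}(\mathbb{N}))$, the goal is to show that $p$ fails to be injective even when restricted to $K(S(S_\infty))$, so that the extra ultrafilter structure recorded by $S(S_\infty)$ is genuinely invisible to the enveloping semigroup inside the minimal ideal.
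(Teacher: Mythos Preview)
Your reduction to the condition ``$ux = uy$ for every minimal idempotent $u$'' is correct, but you then set yourself a harder problem than necessary. The key observation you are missing is that the relation of being separated by retractions does \emph{not} depend on the choice of minimal subflow. Concretely: if $x$ and $y$ can be separated by a retraction onto some minimal $N$, pick any $p$ in your favorite minimal $M$; then $\lambda_p$ restricted to $N$ is an isomorphism, and composing with a further left multiplication in $M$ turns $\lambda_p \circ \psi$ into a retraction onto $M$ that still separates $x$ and $y$. Hence it suffices to check $wx = wy$ for idempotents $w$ in a \emph{single} minimal right ideal $M$. Your third condition---agreement under idempotents from every other minimal right ideal---simply evaporates.

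With that reduction in hand, the paper's construction is short and explicit, and quite different from the vague plan you outline. One first proves (Theorem~\ref{TwoIdeals}) that there exist distinct minimal right ideals $M \neq N$ with $J(M)\cup J(N)$ a subsemigroup; equivalently (Theorem~\ref{ProductIdemp}), the equivalence relations $A^M$ and $A^N$ on $\mathrm{LO}(\omega)$ coincide. This is where the real combinatorics lies: one builds a thick ultrafilter on $[\omega]^2$ that is not weakly $(2,3)$-selective, via an intertwining argument over a set $S\subseteq [\omega]^3$ that is not $(<\!\aleph_0)$-near-closed (the random $3$-uniform hypergraph works). Given such $M$ and $N$, fix any minimal left ideal $L$ and take the unique idempotents $u\in M\cap L$ and $v\in N\cap L$. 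For any idempotent $w\in M$ one has $wu = u$ since $w$ is a left identity on $M$, while $wv$ is idempotent by the semigroup hypothesis and lies in $M\cap L$, forcing $wv = u$ as well. So $u\neq v$ cannot be separated.

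Your proposal never isolates the specific pair $(x,y)$, and the passage about ``twin coherent ultrafilter systems whose images collapse under any minimal idempotent'' is not a construction---it is a restatement of what needs to be proved. Without the reduction to one minimal right ideal, controlling the action of idempotents from uncountably many other ideals is not feasible by the methods you sketch.
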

\vspace{3 mm}

On the way to proving Theorem \ref{PointsSmallestIdeal}, we prove some theorems of independent interest both for general topological groups $G$ and for $S_\infty$. By a well-known theorem of Ellis, every compact left-topological semigroup $S$ contains an \emph{idempotent}, an element $u\in S$ which satisfies $u\cdot u = u$ (see [E]). Given $Y\subseteq S$, write $J(Y)$ for the set of idempotents in $Y$. Our route to proving Theorem \ref{PointsSmallestIdeal} involves a careful understanding of when the product of two idempotents is or is not an idempotent. 

In the case $G = S_\infty$, we are able to find large semigroups of idempotents; this is what allows us to prove Theorem \ref{PointsSmallestIdeal}.
\vspace{3 mm}

\begin{theorem}
\label{TwoIdeals}
There are two minimal subflows $M\neq N\subseteq S(S_\infty)$ so that $J(M)\cup J(N)$ is a semigroup.
\end{theorem}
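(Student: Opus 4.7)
The plan is to reduce Theorem \ref{TwoIdeals} to a concrete condition on the Rees matrix presentation of $K(S(S_\infty))$, and then verify that condition using the explicit ultrafilter model of $S(S_\infty)$ developed in \cite{Z}.

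First, note that in the compact left-topological semigroup $S(S_\infty)$, subflows coincide with closed right ideals (closure under right-$G$-invariance plus continuity of left multiplications gives closure under right multiplication by all of $S$), so minimal subflows coincide with minimal right ideals, which are automatically closed. Represent $K(S(S_\infty))$ as a Rees matrix semigroup $\mathcal{M}(H; I, \Lambda; P)$, with $I$ indexing minimal right ideals, $\Lambda$ indexing minimal left ideals, $H$ the common structure group, and $P : \Lambda \times I \to H$ the sandwich matrix. The unique idempotent in $R_i \cap L_\lambda$ is $e_{i,\lambda} = (i,\, P(\lambda,i)^{-1},\, \lambda)$, and a direct calculation gives
\[
e_{i,\lambda}\cdot e_{j,\mu} \;=\; \bigl(i,\; P(\lambda,i)^{-1}P(\lambda,j)P(\mu,j)^{-1},\; \mu\bigr),
\]
which is idempotent iff $P(\lambda,j)P(\mu,j)^{-1} = P(\lambda,i)P(\mu,i)^{-1}$. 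In particular $J(R_i)$ is automatically a semigroup, since $e_{i,\lambda}\cdot e_{i,\mu} = e_{i,\mu}$ makes it a right-zero band. Taking $M = R_{i_0}$ and $N = R_{j_0}$, the requirement that cross-products land back in $J(M)\cup J(N)$ then forces the existence of a single constant $c \in H$ with $P(\lambda, j_0) = P(\lambda, i_0)\, c$ for every $\lambda \in \Lambda$; the cleanest instance is $c = 1_H$, when the two sandwich columns literally coincide.

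The second step is to construct two distinct minimal right ideals $M, N \subseteq S(S_\infty)$ whose sandwich columns coincide. Using the description of $S(S_\infty)$ from \cite{Z} as coherent systems of near-ultrafilters on the sets of finite injective tuples from $\omega$, I would look for a continuous $G$-endomorphism $\sigma: K(S(S_\infty)) \to K(S(S_\infty))$ (arising by functoriality of the Samuel compactification from an inner transformation of $\omega$, e.g.\ a bijection $\omega \to \omega \setminus F$ for a finite $F$, or an appropriately chosen shift) that fixes every minimal left ideal setwise but moves some minimal right ideal. Setting $N := \sigma(M)$ then produces matching sandwich columns by naturality of $P$.

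The main obstacle is to achieve, simultaneously, $M \neq N$ and the uniform preservation of \emph{every} minimal left ideal by $\sigma$. Since $|\Lambda|$ is very large (at least $2^{2^{\aleph_0}}$), this uniformity cannot be arranged pointwise and must come from a structural symmetry of $S(S_\infty)$. Here the ``large semigroups of idempotents'' in $S(S_\infty)$ advertised in the introduction supply precisely the rigidity needed: concrete Ramsey-type families of idempotent ultrafilters on finite injections whose orbits under the chosen symmetry produce two genuinely distinct minimal right ideals whose sandwich columns nevertheless agree. Verifying the required Ramsey-theoretic identities at the level of ultrafilters is the technical heart of the argument.
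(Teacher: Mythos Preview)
Your Rees-matrix reduction in the first step is correct and is an abstract counterpart of the paper's Theorem~\ref{ProductIdemp}, which gives the $S_\infty$-specific equivalent condition $A^M=A^N$ (an equivalence relation on $\mathrm{LO}(\omega)$ determined by the level-$2$ ultrafilter $\phi(\mathcal{F}_2^M)$). So the two approaches agree on \emph{what} must be verified; the divergence is entirely in \emph{how} to produce the two ideals.

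The second step, however, is not a proof but a plan with a genuine gap. The proposed $\sigma$ is not constructed, and the candidates you list do not obviously work. A ``bijection $\omega\to\omega\setminus F$'' is not an element of $S_\infty$, and it is unclear what $G$-endomorphism of $K(S(S_\infty))$ it should induce; the natural $G$-maps on $S(G)$ are left multiplications $\lambda_p$, and while these send each minimal left ideal into itself, there is no reason they should act as the identity on sandwich columns while moving a minimal right ideal. You explicitly flag the obstacle (simultaneously $M\neq N$ and preservation of all $2^{2^{\aleph_0}}$ minimal left ideals) and then defer to unspecified ``Ramsey-type identities''. That is exactly the content of the theorem, so nothing has been proved.

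The paper's route is entirely different and concrete. Using Lemma~\ref{Idempotents} and Theorem~\ref{ProductIdemp}, it reduces Theorem~\ref{TwoIdeals} to finding two distinct thick ultrafilters on $[\omega]^3$ with the same projection to $\beta[\omega]^2$; equivalently, a thick $p\in\beta[\omega]^2$ that is not weakly $(2,3)$-selective. This is achieved by an ``intertwining'' construction (Lemma~\ref{IntertwineLemmas}, Theorem~\ref{IntertwineWorks}) applied to a set $S\subseteq[\omega]^3$ that is not $(<\!\aleph_0)$-near-closed for the closure operator $\psi=\lambda\partial$. The random $3$-uniform hypergraph is shown to be such an $S$ by a counting argument (Theorem~\ref{ExistsNotClosed}). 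None of this involves symmetries of $S(S_\infty)$ or endomorphisms; the work is purely finite combinatorics of hypergraphs and thick filters. If you want to salvage your outline, you would need to replace the speculative $\sigma$ with an explicit construction of $M\neq N$ satisfying your column condition, and the paper's ultrafilter construction is essentially the only known way to do this.
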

\vspace{3 mm}

It is worth noting that any minimal subflow $M\subseteq S(G)$ is a compact subsemigroup of $S(G)$, so $J(M)\neq\emptyset$.

There are some cases when it is clear that $K(S(G))$ contains sufficiently large semigroups of idempotents. Given a $G$-flow $X$, recall that a pair of points $x,y\in X$ is called \emph{proximal} if there is $p\in E(X)$ with $xp = yp$; the pair $(x,y)$ is called \emph{distal} if it is not proximal. A $G$-flow $X$ is \emph{proximal} if every pair from $X$ is proximal, and $X$ is called \emph{distal} if every pair $x\neq y\in X$ is distal. If $M(G)$ is proximal, then whenever $M\subseteq S(G)$ is a minimal subflow, we have $J(M) = M$. If $M(G)$ is distal and $M\subseteq S(G)$ is a minimal subflow, then $J(M) = \{u\}$, a single idempotent. So long as $S(G)$ contains at least two minimal right ideals, which is always the case when $G$ is Polish (see [Ba]), then $E(M(G))\not\cong S(G)$ in these cases. 

The paper is organized as follows. Section 2 provides background on \fr structures, their automorphism groups, and the Samuel compactifications of these groups. Section 3 gives a review of KPT correspondence. Section 4 gives the proof of Theorem \ref{PestovSolution}, and section 5 gives the proofs of Theorems \ref{PointsSmallestIdeal} and \ref{TwoIdeals}. Section 6 gives a brief discussion of the case where $M(G)$ is proximal or distal. The last section, section 7, investigates some of the combinatorial content of section 5 and introduces some new types of ultrafilters on $[\omega]^2$. 
\vspace{5 mm}

\section{Countable first-order structures and the Samuel compactification}
\vspace{5 mm}

In this section, we provide the necessary background on countable structures and provide an explicit construction of the Samuel compactification of an automorphism group. The presentation here is largely taken from [Z1].

Recall that $S_\infty$ is the group of all permutations of $\omega := \{0,1,2,...\}$. We can endow $S_\infty$ with the topology of pointwise convergence; a typical basic open neighborhood of the identity is $\{g\in S_\infty: g(k) = k \text{ for every } k< n\}$ for some $n< \omega$. Notice that each of these basic open neighborhoods is in fact a clopen subgroup.

Fix now $G$ a closed subgroup of $S_\infty$. A convenient way to describe the $G$-orbits of finite tuples from $\omega$ is given by the notions of a \fr class and structure. A \emph{relational language} $L = \{R_i: i\in I\}$ is a collection of relation symbols. Each relation symbol $R_i$ has an \emph{arity} $n_i\in \mathbb{N}$. An \emph{$L$-structure} $\mathbf{A} = \langle A, R_i^\mathbf{A}\rangle$ consists of a set $A$ and relations $R_i^\mathbf{A}\subseteq A^{n_i}$; we say that $\mathbf{A}$ is an $L$-structure on $A$. If $\mathbf{A},\mathbf{B}$ are $L$-structures, then $g: \mathbf{A}\rightarrow \mathbf{B}$ is an \emph{embedding} if $g$ is a map from $A$ to $B$ such that $R_i^\mathbf{A}(x_1,...,x_{n_i})\Leftrightarrow R_i^\mathbf{B}(g(x_1),...,g(x_{n_i}))$ for all relations. We write $\emb{\mathbf{A}, \mathbf{B}}$ for the set of embeddings from $\mathbf{A}$ to $\mathbf{B}$. We say that $\mathbf{B}$ \emph{embeds} $\mathbf{A}$ and write $\mathbf{A}\leq \mathbf{B}$ if $\emb{\mathbf{A}, \mathbf{B}}\neq\emptyset$. An \emph{isomorphism} is a bijective embedding, and an \emph{automorphism} is an isomorphism between a structure and itself. If $A\subseteq B$, then we say that $\mathbf{A}$ is a \emph{substructure} of $\mathbf{B}$, written $\mathbf{A}\subseteq \mathbf{B}$, if the inclusion map is an embedding. $\mathbf{A}$ is finite, countable, etc.\ if $A$ is.
\vspace{2 mm}

\begin{defin}
Let $L$ be a relational language. A \emph{\fr class} $\mathcal{K}$ is a class of $L$-structures with the following four properties.
\vspace{2 mm}

\begin{enumerate}
\item
$\mathcal{K}$ contains only finite structures, contains structures of arbitrarily large finite cardinality, and is closed under isomorphism.
\vspace{2 mm}

\item
$\mathcal{K}$ has the \emph{Hereditary Property} (HP): if $\mathbf{B}\in \mathcal{K}$ and $\mathbf{A}\subseteq \mathbf{B}$, then $\mathbf{A}\in \mathcal{K}$.
\vspace{2 mm}

\item
$\mathcal{K}$ has the \emph{Joint Embedding Property} (JEP): if $\mathbf{A}, \mathbf{B}\in \mathcal{K}$, then there is $\mathbf{C}$ which embeds both $\mathbf{A}$ and $\mathbf{B}$.
\vspace{2 mm}

\item
$\mathcal{K}$ has the \emph{Amalgamation Property} (AP): if $\mathbf{A}, \mathbf{B}, \mathbf{C}\in \mathcal{K}$ and $f: \mathbf{A}\rightarrow \mathbf{B}$ and $g: \mathbf{A}\rightarrow \mathbf{C}$ are embeddings, there is $\mathbf{D}\in \mathcal{K}$ and embeddings $r: \mathbf{B}\rightarrow \mathbf{D}$ and $s:\mathbf{C}\rightarrow\mathbf{D}$ with $r\circ f = s\circ g$.
\end{enumerate}
\end{defin}
\vspace{3 mm}

If $\mathbf{K}$ is a countably infinite $L$-structure (which we will typically assume has underlying set $\omega$), we write $\age{\mathbf{K}}$ for the class of finite $L$-structures which embed into $\mathbf{K}$. The following is the major fact about \fr classes.
\vspace{2 mm}

\begin{fact}
If $\mathcal{K}$ is a \fr class, there is up to isomorphism a unique countably infinite $L$-structure $\mathbf{K}$ with $\age{\mathbf{K}} = \mathcal{K}$ satisfying one of the following two equivalent conditions.
\vspace{2 mm}

\begin{enumerate}
\item
$\mathbf{K}$ is \emph{ultrahomogeneous}: if $f: \mathbf{A}\rightarrow \mathbf{B}$ is an isomorphism between finite substructures of $\mathbf{K}$, then there is an automorphism of $\mathbf{K}$ extending $f$.
\vspace{2 mm}

\item
$\mathbf{K}$ satisfies the \emph{Extension Property}: if $\mathbf{B}\in \mathcal{K}$, $\mathbf{A}\subseteq \mathbf{B}$, and $f: \mathbf{A}\rightarrow \mathbf{K}$ is an embedding, there is an embedding $h: \mathbf{B}\rightarrow \mathbf{K}$ extending $f$.
\end{enumerate}
\vspace{2 mm}

Conversely, if $\mathbf{K}$ is a countably infinite $L$-structure satisfying 1 or 2, then $\age{\mathbf{K}}$ is a \fr class.
\end{fact}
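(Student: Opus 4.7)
My plan is to treat the four tasks packaged in the statement separately: the equivalence of ultrahomogeneity and the extension property, the existence of a countable $\mathbf{K}$ with $\age{\mathbf{K}}=\mathcal{K}$ satisfying them, the uniqueness of such a $\mathbf{K}$, and the converse direction.

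For (1)$\Leftrightarrow$(2), I would handle (1)$\Rightarrow$(2) directly: given $\mathbf{A}\subseteq \mathbf{B}\in\mathcal{K}$ and $f\colon \mathbf{A}\to\mathbf{K}$, since $\mathcal{K}=\age{\mathbf{K}}$ I can fix any embedding $g\colon \mathbf{B}\to\mathbf{K}$; then $f\circ(g|_\mathbf{A})^{-1}$ is an isomorphism between two finite substructures of $\mathbf{K}$, so ultrahomogeneity supplies an automorphism $\sigma$ extending it, and $\sigma\circ g$ is the required extension of $f$. For (2)$\Rightarrow$(1), I would run a standard back-and-forth: enumerate $\mathbf{K}$ as $\{k_n:n<\omega\}$ and build a chain of finite partial isomorphisms of $\mathbf{K}$ extending the initial $f\colon\mathbf{A}\to\mathbf{B}$, at each odd step invoking the extension property to bring the next $k_n$ into the domain, and at each even step doing the dual to bring it into the range, so the union is an automorphism.

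For existence, I would construct $\mathbf{K}$ with underlying set $\omega$ as a union of a chain $\mathbf{K}_0\subseteq \mathbf{K}_1\subseteq\cdots$ of structures from $\mathcal{K}$. Since $\mathcal{K}$ has only countably many isomorphism types (the language being at most countable is implicit), I can set up bookkeeping that enumerates all triples $(\mathbf{A},\mathbf{B},f)$ with $\mathbf{A}\subseteq \mathbf{B}\in\mathcal{K}$ and $f\colon \mathbf{A}\to\mathbf{K}_n$ an embedding, and at each stage fulfill one pending task by using AP to amalgamate $\mathbf{K}_n$ and $\mathbf{B}$ over $\mathbf{A}$, then relabeling the fresh points onto the next free natural numbers. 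JEP seeds the chain and ensures every isomorphism type of $\mathcal{K}$ is hit. The union $\mathbf{K}$ then satisfies $\age{\mathbf{K}}=\mathcal{K}$ ($\subseteq$ is HP, $\supseteq$ is the bookkeeping) and satisfies the extension property by construction. Uniqueness is another back-and-forth: given two candidates $\mathbf{K},\mathbf{K}'$, enumerate both and use (2) on alternate sides to grow a finite partial isomorphism whose union realizes $\mathbf{K}\cong \mathbf{K}'$.

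The converse is comparatively cheap. Given ultrahomogeneous $\mathbf{K}$, the class $\age{\mathbf{K}}$ is closed under isomorphism and hereditary by definition, and contains arbitrarily large members since $\mathbf{K}$ is infinite. JEP follows because two finite substructures of $\mathbf{K}$ both embed into the finite substructure on their union. For AP, given $f\colon \mathbf{A}\to\mathbf{B}$ and $g\colon \mathbf{A}\to\mathbf{C}$ in $\age{\mathbf{K}}$, fix an embedding $\beta\colon \mathbf{B}\to\mathbf{K}$ and an embedding $\gamma\colon \mathbf{C}\to\mathbf{K}$; the maps $\beta\circ f$ and $\gamma\circ g$ are both embeddings of $\mathbf{A}$ into $\mathbf{K}$ with isomorphic images, so an automorphism of $\mathbf{K}$ supplied by ultrahomogeneity can be used to adjust $\gamma$ so that $\beta\circ f=\gamma\circ g$, and the finite substructure of $\mathbf{K}$ on $\beta(\mathbf{B})\cup\gamma(\mathbf{C})$ is the required amalgam. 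The main technical obstacle across the whole argument is the bookkeeping in the existence step --- making sure every possible extension task is eventually addressed and that the finite-stage amalgamations can be consistently performed on $\omega$ --- but this is standard and relies only on HP, JEP, AP, and the countability of $\mathcal{K}$.
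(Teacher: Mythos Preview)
The paper does not prove this statement at all: it is recorded as a \emph{Fact}, i.e.\ classical background (Fra\"iss\'e's theorem), and no argument is supplied. So there is no ``paper's own proof'' to compare against.

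That said, your proposal is the standard textbook proof and is correct. The reduction $(1)\Rightarrow(2)$ via composing with an automorphism, the back-and-forth for $(2)\Rightarrow(1)$ and for uniqueness, the bookkept chain construction using AP for existence, and the verification of AP for $\age{\mathbf{K}}$ by realigning embeddings inside $\mathbf{K}$ via ultrahomogeneity are all exactly the expected moves. Your caveat that countability of the language (or at least of the isomorphism types in $\mathcal{K}$) is being used implicitly in the existence step is apt; the paper's setup does not state this explicitly, but it is the usual standing hypothesis for Fra\"iss\'e theory and is harmless here.
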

\vspace{3 mm}

Given a \fr class $\mathcal{K}$, we write $\flim{\mathcal{K}}$, the \emph{\fr limit} of $\mathcal{K}$, for the unique structure $\mathbf{K}$ as above. We say that $\mathbf{K}$ is a \emph{\fr structure} if $\mathbf{K}\cong \flim{\mathcal{K}}$ for some \fr class. Our interest in \fr structures stems from the following result.
\vspace{3 mm}

\begin{fact}
For any \fr structure $\mathbf{K}$, $\aut{\mathbf{K}}$ is isomorphic to a closed subgroup of $S_\infty$. Conversely, any closed subgroup of $S_\infty$ is isomorphic to $\aut{\mathbf{K}}$ for some \fr structure $\mathbf{K}$.
\end{fact}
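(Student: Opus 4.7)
The statement splits into two directions. For the forward direction, I would fix a \fr structure $\mathbf{K}$ with underlying set $\omega$, so that $\aut{\mathbf{K}}$ sits as a subgroup of $S_\infty$ with matching topology (basic neighborhoods of the identity in both are pointwise stabilizers of finite sets). To check closedness in $S_\infty$, suppose $g_n \in \aut{\mathbf{K}}$ converges to $g$ in $S_\infty$, meaning $g_n \to g$ and $g_n^{-1} \to g^{-1}$ pointwise. For any relation symbol $R$ of arity $k$ and tuple $\bar{x} \in \omega^k$, discreteness of $\omega$ forces $g_n(\bar{x}) = g(\bar{x})$ eventually, so $R^{\mathbf{K}}(\bar{x}) \Leftrightarrow R^{\mathbf{K}}(g(\bar{x}))$; the analogous argument applied to $g^{-1}$ then shows $g \in \aut{\mathbf{K}}$.

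For the converse direction, given a closed subgroup $G \leq S_\infty$, the plan is to manufacture a canonical structure on $\omega$ from the orbit structure of $G$. Let $L$ be the relational language with one $n$-ary symbol $R_O$ for each $n \geq 1$ and each orbit $O$ of the coordinatewise $G$-action on $\omega^n$ (countably many per arity, since $\omega^n$ is countable). Define the $L$-structure $\mathbf{K}$ on $\omega$ by declaring $R_O^{\mathbf{K}}$ to hold exactly on the tuples in $O$. By construction $G \subseteq \aut{\mathbf{K}}$. For the reverse inclusion, given $h \in \aut{\mathbf{K}}$ and $n < \omega$, $h$ preserves the orbit relation containing $(0,\dots, n-1)$, so $(h(0),\dots, h(n-1))$ lies in the same $G$-orbit, which means some $g \in G$ agrees with $h$ on $\{0,\dots,n-1\}$; since $G$ is closed in $S_\infty$, letting $n$ vary forces $h \in G$.

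To conclude, I would verify that $\mathbf{K}$ is a \fr structure by checking ultrahomogeneity directly: any isomorphism $f : \mathbf{A} \to \mathbf{B}$ between finite substructures of $\mathbf{K}$ sends an enumeration of $\mathbf{A}$ to a tuple realizing the same orbit relation, hence lying in the same $G$-orbit, so $f$ is extended by some $g \in G \subseteq \aut{\mathbf{K}}$. The converse portion of Fact 2.2 then delivers that $\age{\mathbf{K}}$ is a \fr class with $\mathbf{K}$ as its Fraïssé limit (note that $\mathbf{K}$ contains finite substructures of every finite cardinality, since it is infinite). The main subtlety is the closedness argument of the forward direction: one must work in the group topology of $S_\infty$ rather than the subspace topology of $\omega^\omega$, since pointwise limits of permutations need not be bijections. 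The orbit-labeling construction in the reverse direction is the key conceptual step, and everything else is bookkeeping.
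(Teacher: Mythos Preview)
The paper states this result as a background \emph{Fact} without proof, so there is no paper argument to compare against. Your proof is the standard one and is correct in both directions: closedness of $\aut{\mathbf{K}}$ in $S_\infty$ follows directly from pointwise convergence preserving relations, and the orbit-labeling construction together with closedness of $G$ is exactly the canonical way to realize a closed subgroup as an automorphism group of an ultrahomogeneous structure.

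One small comment: your closing remark about the ``main subtlety'' is slightly misplaced. The forward direction is the routine one; once you take a limit $g\in S_\infty$, $g$ is already a bijection by hypothesis, and the relation-preservation check is immediate (you do not even need the separate $g^{-1}$ argument, since each $g_n$ gives the full biconditional). Moreover, the group topology on $S_\infty$ and the subspace topology from $\omega^\omega$ coincide, so there is no real distinction to draw. The genuinely substantive step is the one you correctly identify later: the orbit-relation construction in the converse, which is what makes ultrahomogeneity automatic and forces $\aut{\mathbf{K}} \subseteq G$ via the closedness hypothesis.
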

\vspace{3 mm}

Fix a \fr class $\mathcal{K}$ with \fr limit $\mathbf{K}$. Set $G = \aut{\mathbf{K}}$. We also fix an \emph{exhaustion} $\mathbf{K} = \bigcup_n \mathbf{A}_n$, $n\geq 1$, with each $\mathbf{A}_n\in \mathcal{K}$, $|\mathbf{A}_n| = n$, and $\mathbf{A}_m\subseteq \mathbf{A}_n$ for $m\leq n$. Whenever we write $\mathbf{K} = \bigcup_n \mathbf{A}_n$, it will be assumed that the right side is an exhaustion of $\mathbf{K}$. Write $H_n = \{gG_n: g\in G\}$, where $G_n = G\cap N_{\mathbf{A}_n}$ is the pointwise stabilizer of $\mathbf{A}_n$. We can identify $H_n$ with $\emb{\mathbf{A}_n, \mathbf{K}}$, the set of embeddings of $\mathbf{A}_n$ into $\mathbf{K}$. Note that under this identification, we have $H_n = \bigcup_{N\geq n} \emb{\mathbf{A}_n, \mathbf{A}_N}$. For $g\in G$, we often write $g|_n$ for $gG_n$, and we write $i_n$ for $G_n$. The group $G$ acts on $H_n$ on the left; if $x\in H_n$ and $g\in G$, we have $g\cdot x = g\circ x$. For $m\leq n$, we let $i^n_m\in \emb{\mathbf{A}_m, \mathbf{A}_n}$ be the inclusion embedding.
\vspace{3 mm}

Each $f\in \emb{\mathbf{A}_m, \mathbf{A}_n}$ gives rise to a dual map $\hatf: H_n\rightarrow H_m$ given by $\hatf(x) = x\circ f$. Note that we must specify the range of $f$ for the dual map to make sense, but this will usually be clear from context. 
\vspace{3 mm}

\begin{prop}\mbox{}
\label{Amalgamation}
\begin{enumerate}
\item
For $f\in \emb{\mathbf{A}_m, \mathbf{A}_n}$, the dual map $\hatf: H_n\rightarrow H_m$ is surjective.
\vspace{2 mm}
\item
For every $f\in \emb{\mathbf{A}_m, \mathbf{A}_n}$, there is $N\geq n$ and $h\in \emb{\mathbf{A}_n, \mathbf{A}_N}$ with $h\circ f = i^N_m$.
\end{enumerate}
\end{prop}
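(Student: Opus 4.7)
The plan is to deduce both parts directly from the Extension Property of the \fr limit $\mathbf{K}$, using that $\mathbf{K} = \bigcup_n \mathbf{A}_n$ is an exhaustion by finite structures from $\mathcal{K}$.

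For part (1), the goal is: given $f\in\emb{\mathbf{A}_m,\mathbf{A}_n}$ and $y\in H_m = \emb{\mathbf{A}_m,\mathbf{K}}$, produce $x\in H_n$ with $x\circ f = y$. First I would observe that $f$ is an isomorphism onto its image $f(\mathbf{A}_m)\subseteq \mathbf{A}_n$, so $f^{-1}\colon f(\mathbf{A}_m)\to \mathbf{A}_m$ makes sense and $y\circ f^{-1}\colon f(\mathbf{A}_m)\to \mathbf{K}$ is an embedding. Since $f(\mathbf{A}_m)\subseteq \mathbf{A}_n\in\mathcal{K}$, the Extension Property supplies an embedding $x\colon \mathbf{A}_n\to \mathbf{K}$ extending $y\circ f^{-1}$. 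For $a\in \mathbf{A}_m$ we then have $x(f(a)) = (y\circ f^{-1})(f(a)) = y(a)$, so $\hatf(x) = x\circ f = y$, which is exactly the statement that $\hatf$ is surjective.

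For part (2), the goal is to find $N\geq n$ and $h\in\emb{\mathbf{A}_n,\mathbf{A}_N}$ with $h\circ f = i^N_m$. The key move is to view the partial inverse of $f$ as an embedding into $\mathbf{K}$: namely, the map $f^{-1}\colon f(\mathbf{A}_m)\to \mathbf{A}_m\subseteq \mathbf{K}$ is an embedding of a substructure of $\mathbf{A}_n$ into $\mathbf{K}$. Applying the Extension Property to this embedding (with ambient structure $\mathbf{A}_n$) yields an embedding $\tilde h\colon \mathbf{A}_n\to \mathbf{K}$ extending $f^{-1}$, so that $\tilde h(f(a)) = a$ for every $a\in \mathbf{A}_m$. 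Now $\tilde h(\mathbf{A}_n)$ is a finite subset of $\mathbf{K}$, so by choosing $N\geq n$ large enough we can assume $\tilde h(\mathbf{A}_n)\subseteq \mathbf{A}_N$; this is possible because the $\mathbf{A}_k$ form an exhaustion. Letting $h$ be $\tilde h$ regarded as a map into $\mathbf{A}_N$ gives $h\in\emb{\mathbf{A}_n,\mathbf{A}_N}$, and the identity $h\circ f = i^N_m$ is exactly what we recorded above.

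Neither step presents a genuine obstacle; both are direct invocations of the Extension Property. The only mildly delicate point is bookkeeping in part (2): one must be careful that $\mathbf{A}_m$ sits inside $\mathbf{A}_N$ via the chain of inclusions $\mathbf{A}_m\subseteq \mathbf{A}_n\subseteq \mathbf{A}_N$ (automatic from the exhaustion) so that the equality $h\circ f = i^N_m$ literally holds in $\emb{\mathbf{A}_m,\mathbf{A}_N}$, rather than merely up to identifying $\mathbf{A}_m$ with its image. Together, parts (1) and (2) are the two ``amalgamation-style'' consequences of EP that will let subsequent sections manipulate the inverse system $(H_n,\hatf)$ freely, which is presumably the reason both are bundled under the label \emph{Amalgamation}.
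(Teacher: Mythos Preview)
Your proof is correct and essentially follows the paper's approach. The only cosmetic difference is in part (2): the paper invokes ultrahomogeneity to find an automorphism $g\in G$ with $g\circ f = i_m$ and then sets $h = g|_n$, whereas you invoke the Extension Property directly to extend $f^{-1}$ to an embedding $\tilde h:\mathbf{A}_n\to\mathbf{K}$; since ultrahomogeneity and the Extension Property are equivalent (Fact~2.2), these are the same argument in different clothing.
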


\begin{proof}
Item $(1)$ is an immediate consequence of the extension property. For item $(2)$, use ultrahomogeneity to find $g\in G$ with $g\circ f = i_m$. Let $N\geq n$ be large enough so that $\ran{g|_n}\subseteq \mathbf{A}_N$, and set $h = g|_n$.
\end{proof}

We now proceed with an explicit construction of $S(G)$. First, if $X$ is a discrete space, we let $\beta X$ be the space of ultrafilters on $X$. We topologize $\beta X$ by declaring a typical basic open neighborhood to be of the form $\{p\in \beta X: A\in p\}$, where $A\subseteq X$. We view $X$ as a subset of $\beta X$ by identifying $x\in X$ with the ultrafilter $\{A\subseteq X: x\in A\}$. If $Y$ is a compact Hausdorff space and $\phi: X\rightarrow Y$ is any map, there is a unique continuous extension $\tilde{\phi}: \beta X\rightarrow Y$.

Now let $f\in \emb{\mathbf{A}_m, \mathbf{A}_n}$. The dual map $\hatf$ extends to a continuous map $\tilde{f}: \beta H_n\rightarrow \beta H_m$. If $p\in \beta H_n$ and $f\in \emb{\mathbf{A}_m, \mathbf{A}_n}$, we will sometimes write $p\cdot f$ for $\tilde{f}(p)$. Form the inverse limit $\varprojlim \beta H_n$ along the maps $\tildei{m}{n}$. We can identify $G$ with a dense subspace of $\varprojlim \beta H_n$ by associating to each $g\in G$ the sequence of ultrafilters principal on $g|_n$. The space $\varprojlim \beta H_n$ turns out to be the Samuel compactification $S(G)$ (see Corollary 3.3 in [P]).

To see that $S(G)$ is the greatest ambit, we need to exhibit a right $G$-action on $S(G)$. This might seem unnatural at first; after all, the left $G$-action on each $H_n$ extends to a left $G$-action on $\beta H_n$, giving us a left $G$-action on $S(G)$. The problem is that the left action is not continuous when $G$ is given its Polish topology. The right action we describe doesn't ``live'' on any one level of the inverse limit $\varprojlim \beta H_n$; we need to understand how the various levels interact. 

Let $\pi_n: \varprojlim \beta H_n\rightarrow \beta H_n$ be the projection map. We often write $\alpha(n) := \pi_n(\alpha)$. For $\alpha\in \varprojlim \beta H_n$, $g\in G$, $m\in \mathbb{N}$, and $S\subseteq H_m$, we have
\begin{align*}
\\[-5 mm]
S\in \alpha g(m) \Leftrightarrow \{x\in H_n: x\circ g|_m\in S\}\in \alpha(n)\\[-5 mm]
\end{align*}
where $n\geq m$ is large enough so that $\ran{g|_m}\subseteq \mathbf{A}_n$. Notice that if $g|_m = h|_m = f$, then $\alpha g(m) = \alpha h(m) := \alpha\cdot f := \lambda_m^\alpha(f)$. By distinguishing the point $1\in \varprojlim \beta H_n$ with $1(m)$ principal on $i_m$, we endow $S(G)$ with the structure of a $G$-ambit, and $(S(G), 1)$ is the greatest ambit (see Theorem 6.3 in [Z]). 

Using the universal property of the greatest ambit, we can define a left-topological semigroup structure on $S(G)$: Given $\alpha$ and $\gamma$ in $\varprojlim \beta H_n$, $m\in \mathbb{N}$, and $S\subseteq H_m$, we have
\begin{align*}
\\[-5 mm]
S\in \alpha\gamma(m) \Leftrightarrow \{f\in H_m: S\in \alpha\cdot f\}\in \gamma(m).\\[-5mm]
\end{align*}
If $\alpha\in S(G)$ and $S\subseteq H_m$, a useful shorthand is to put 
\begin{align*}
\\[-5 mm]
\alpha^{-1}(S) = \{f\in H_m: S\in \alpha\cdot f\}.\\[-5 mm]
\end{align*}
Then the semigroup multiplication can be written as 
\begin{align*}
\\[-5 mm]
S\in \alpha\gamma(m) \Leftrightarrow \alpha^{-1}(S)\in \gamma(m).\\[-5 mm]
\end{align*}
Notice that for fixed $\alpha$, $\alpha\gamma(m)$ depends only on $\gamma(m)$; indeed, if $\alpha\in \varprojlim \beta H_n$, $p\in \beta H_m$, and $S\subseteq H_m$, we have $S\in \alpha\cdot p$ iff $\alpha^{-1}(S)\in p$. In fact, $\alpha\cdot p = \tilde\lambda_m^\alpha(p)$, where the map $\tilde\lambda_m^\alpha$ is the continuous extention of $\lambda_m^\alpha$ to $\beta H_m$. 

As promised in the introduction, we now explain the reason behind our left-right conventions. The primary reason behind considering \emph{right} $G$-flows is because for $G = \aut{\mathbf{K}}$, the \emph{left} uniformity is very natural to describe. Namely, every entourage contains an entourage of the form $\{(g,h)\in G\times G: g|_m = h|_m\}$. This leads naturally to considering the embeddings \emph{from} $\mathbf{A}_m$ to $\mathbf{K}$. If we wanted to consider the \emph{right} uniformity, we would instead be considering partial isomorphisms of $\mathbf{K}$ with \emph{range} $\mathbf{A}_m$, which are less easily described.
\vspace{5 mm}

\section{KPT correspondence}
\vspace{5 mm}

In this section, we provide a brief review of KPT correspondence. For proofs of the results in this section, see [KPT], [NVT], or [Z].

Let $L$ be a relational language and $L^* = L\cup \mathcal{S}$, where $\mathcal{S} = \{S_i: i\in\mathbb{N}\}$ and the $S_i$ are new relational symbols of arity $n_i$. If $\mathbf{A}$ is an $L^*$-structure, write $\mathbf{A}|_L$ for the structure obtained by throwing away the interpretations of the relational symbols in $L^*\setminus L$. If $\mathcal{K}^*$ is a class of $L^*$-structures, set $\mathcal{K}^*|_L = \{\mathbf{A}^*|_L: \mathbf{A}^*\in \mathcal{K}^*\}$. If $\mathcal{K} = \mathcal{K}^*|_L$ and $\mathcal{K}^*$ is closed under isomorphism, we say that $\mathcal{K}^*$ is an \emph{expansion} of $\mathcal{K}$. If $\mathbf{A}^*\in\mathcal{K}^*$ and $\mathbf{A}^*|_L = \mathbf{A}$, then we say that $\mathbf{A}^*$ is an expansion of $\mathbf{A}$, and we write $\mathcal{K}^*(\mathbf{A})$ for the set of expansions of $\mathbf{A}$ in $\mathcal{K}^*$. If $f \in \emb{\mathbf{A}, \mathbf{B}}$ and $\mathbf{B}^*\in \mathcal{K}^*(\mathbf{B})$, we let $\mathbf{B}^*\!\cdot\! f$ be the unique expansion of $\mathbf{A}$ so that $f\in \emb{\mathbf{B}^*\!\cdot\! f, \mathbf{B}^*}$. The expansion $\mathcal{K}^*$ is \emph{precompact} if for each $\mathbf{A}\in \mathcal{K}$, the set $\{\mathbf{A}^*\in\mathcal{K}^*: \mathbf{A}^*|_L = \mathbf{A}\}$ is finite. 

If $\mathcal{K}^*$ is an expansion of the Fra\"iss\'e class $\mathcal{K}$, we say that the pair $(\mathcal{K}^*, \mathcal{K})$ is \emph{reasonable} if for any $\mathbf{A},\mathbf{B}\in\mathcal{K}$, embedding $f: \mathbf{A}\rightarrow \mathbf{B}$, and expansion $\mathbf{A}^*$ of $\mathbf{A}$, then there is an expansion $\mathbf{B}^*$ of $\mathbf{B}$ with $f: \mathbf{A}^*\rightarrow \mathbf{B}^*$ an embedding. When $\mathcal{K}^*$ is also a Fra\"iss\'e class, we have the following equivalent definition.
\vspace{3 mm}

\begin{prop}
\label{Reasonable}
Let $\mathcal{K}^*$ be a Fra\"iss\'e expansion class of the Fra\"iss\'e class $\mathcal{K}$ with \fr limits $\mathbf{K}^*,\mathbf{K}$ respectively. Then the pair $(\mathcal{K}^*,\mathcal{K})$ is reasonable iff $\mathbf{K}^*|_L \cong \mathbf{K}$.
\end{prop}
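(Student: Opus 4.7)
The plan is to prove both directions by leveraging the characterization of a Fra\"iss\'e limit via its age together with the extension property. For the forward direction I will show that $\mathbf{K}^*|_L$ is itself a Fra\"iss\'e limit of $\mathcal{K}$, using reasonability to manufacture the expansions needed for the extension property. For the backward direction I will use ultrahomogeneity of $\mathbf{K}$ to move $\beta(\mathbf{B})$ into a convenient position inside $\mathbf{K}^*$ and then define the required expansion as the pullback of $L^*$-structure along $\beta$.

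For $(\Rightarrow)$, assume $(\mathcal{K}^*,\mathcal{K})$ is reasonable. Since $\mathbf{K}^*|_L$ is countable, it suffices to check that $\age{\mathbf{K}^*|_L} = \mathcal{K}$ and that $\mathbf{K}^*|_L$ satisfies the extension property. The age is contained in $\mathcal{K}^*|_L = \mathcal{K}$, and conversely any $\mathbf{A}\in\mathcal{K}$ admits some expansion $\mathbf{A}^*\in\mathcal{K}^*(\mathbf{A})$ which embeds into $\mathbf{K}^*$; reducting gives an embedding $\mathbf{A}\hookrightarrow\mathbf{K}^*|_L$. For the extension property, given $\mathbf{A}\subseteq\mathbf{B}\in\mathcal{K}$ and an $L$-embedding $h\colon\mathbf{A}\to\mathbf{K}^*|_L$, pull back the $L^*$-structure on $h(\mathbf{A})\subseteq\mathbf{K}^*$ along $h$ to obtain an expansion $\mathbf{A}^*\in\mathcal{K}^*(\mathbf{A})$ for which $h\colon\mathbf{A}^*\to\mathbf{K}^*$ is an $L^*$-embedding. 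Reasonability supplies $\mathbf{B}^*\in\mathcal{K}^*(\mathbf{B})$ making the inclusion $\mathbf{A}^*\subseteq\mathbf{B}^*$ an $L^*$-embedding, and the extension property of $\mathbf{K}^*$ then produces an $L^*$-extension $\tilde h\colon\mathbf{B}^*\to\mathbf{K}^*$ of $h$; its $L$-reduct is the desired extension inside $\mathbf{K}^*|_L$.

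For $(\Leftarrow)$, assume $\mathbf{K}^*|_L\cong\mathbf{K}$, fix an embedding $f\colon\mathbf{A}\to\mathbf{B}$ in $\mathcal{K}$, and fix $\mathbf{A}^*\in\mathcal{K}^*(\mathbf{A})$. Pick an $L^*$-embedding $\alpha\colon\mathbf{A}^*\to\mathbf{K}^*$ (using $\age{\mathbf{K}^*}=\mathcal{K}^*$) and an $L$-embedding $\beta\colon\mathbf{B}\to\mathbf{K}^*|_L$. Both $\alpha|_L$ and $\beta\circ f$ are $L$-embeddings of $\mathbf{A}$ into $\mathbf{K}^*|_L\cong\mathbf{K}$, so by ultrahomogeneity of $\mathbf{K}$ there is $\sigma\in\aut{\mathbf{K}^*|_L}$ with $\sigma\circ\beta\circ f = \alpha|_L$. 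Replacing $\beta$ by $\sigma\circ\beta$, we may assume $\beta\circ f$ and $\alpha$ agree as maps of underlying sets. Now define $\mathbf{B}^*$ by pulling back the $L^*$-structure on $\beta(\mathbf{B})\subseteq\mathbf{K}^*$ along $\beta$: then $\mathbf{B}^*\in\mathcal{K}^*(\mathbf{B})$ as a substructure of $\mathbf{K}^*$, $\beta\colon\mathbf{B}^*\to\mathbf{K}^*$ is by construction an $L^*$-embedding, and combining this with the fact that $\beta\circ f=\alpha$ is an $L^*$-embedding $\mathbf{A}^*\to\mathbf{K}^*$ forces $f\colon\mathbf{A}^*\to\mathbf{B}^*$ to be an $L^*$-embedding as well.

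The conceptual engine in both directions is the same: ultrahomogeneity allows one to transport a given embedding into whatever position is convenient inside $\mathbf{K}^*$, after which the sought-for expansion of any finite piece is just the induced $L^*$-substructure. I expect no real obstacle beyond careful bookkeeping of whether each map is being regarded as an $L$- or $L^*$-embedding after passing through reducts and pullbacks.
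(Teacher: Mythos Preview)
The paper does not supply its own proof of this proposition; Section~3 opens by directing the reader to [KPT], [NVT], or [Z] for all proofs therein. Your argument is correct and is essentially the standard one found in those references: in the forward direction you verify that $\mathbf{K}^*|_L$ has age $\mathcal{K}$ and the extension property (using reasonability to expand $\mathbf{B}$ before invoking the extension property of $\mathbf{K}^*$), and in the backward direction you use ultrahomogeneity of $\mathbf{K}^*|_L\cong\mathbf{K}$ to align $\beta\circ f$ with a fixed $L^*$-embedding $\alpha$ of $\mathbf{A}^*$, then read off $\mathbf{B}^*$ as the induced $L^*$-substructure on $\beta(\mathbf{B})$. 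The bookkeeping is fine; in particular your final check that $f\colon\mathbf{A}^*\to\mathbf{B}^*$ is an $L^*$-embedding follows immediately from $\beta\circ f=\alpha$ together with $\beta$ being an $L^*$-embedding by construction.
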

\vspace{3 mm}

Set $\fin{\mathbf{K}} = \{\mathbf{A}\in \mathcal{K}: \mathbf{A}\subseteq \mathbf{K}\}$. Suppose $(\mathcal{K}^*,\mathcal{K})$ is reasonable and precompact. Set
\begin{align*}
\\[-5 mm]
X_{\mathcal{K}^*} := \{\langle \mathbf{K},\vec{S}\rangle: \langle \mathbf{A}, \vec{S}|_A\rangle \in \mathcal{K}^* \text{ whenever } \mathbf{A}\in \fin{\mathbf{K}}\}.\\[-5 mm]
\end{align*}
We topologize this space by declaring the basic open neighborhoods to be of the form $N(\mathbf{A}^*) := \{\mathbf{K}'\in X_\mathcal{K}^*: \mathbf{A}^*\subseteq \mathbf{K}'\}$, where $\mathbf{A}^*$ is an expansion of some $\mathbf{A}\in\fin{\mathbf{K}}$. We can view $X_{\mathcal{K}^*}$ as a closed subspace of 
\begin{align*}
\\[-5 mm]
\prod_{\mathbf{A}\in \fin{\mathbf{K}}} \{\mathbf{A}^*: \mathbf{A}^*\in \mathcal{K}^*(\mathbf{A})\}.\\[-5 mm]
\end{align*}
Notice that since $(\mathcal{K}^*,\mathcal{K})$ is precompact, $X_{\mathcal{K}^*}$ is compact. If $\bigcup_n \mathbf{A}_n = \mathbf{K}$ is an exhaustion, a compatible metric is given by 
\begin{align*}
\\[-5 mm]
d(\langle \mathbf{K}, \vec{S}\rangle, \langle \mathbf{K}, \vec{T}\rangle = 1/k(\vec{S}, \vec{T}),\\[-5 mm]
\end{align*}
where $k(\vec{S}, \vec{T})$ is the largest $k$ for which $\langle \mathbf{A}_k, \vec{S}|_{A_k}\rangle\cong \langle \mathbf{A}_k, \vec{T}|_{A_k}\rangle$.

We can now form the (right) logic action of $G = \aut{\mathbf{K}}$ on $X_{\mathcal{K}^*}$ by setting $\mathbf{K}'\cdot g$ to be the structure where for each relation symbol $S\in \mathcal{S}$, we have
\begin{align*}
\\[-5 mm]
S^{(\mathbf{K}'\cdot g)}(x_1,...,x_n) \Leftrightarrow S^{\mathbf{K}'}(g(x_1),...,g(x_n)).\\[-5 mm]
\end{align*}
This action is jointly continuous, turning $X_{\mathcal{K}^*}$ into a $G$-flow. For readers used to left logic actions, acting on the right by $g$ is the same as acting on the left by $g^{-1}$.

First let us consider when $X_{\mathcal{K}^*}$ is a minimal $G$-flow. 
\vspace{3 mm}

\begin{defin}
\label{ExpP}
We say that the pair $(\mathcal{K}^*,\mathcal{K})$ has the \emph{Expansion Property} (ExpP) when for any $\mathbf{A}^*\in\mathcal{K}^*$, there is $\mathbf{B}\in\mathcal{K}$ such that for any expansion $\mathbf{B}^*$ of $\mathbf{B}$, there is an embedding $f: \mathbf{A}^*\rightarrow \mathbf{B}^*$.
\end{defin}
\vspace{0 mm}

\begin{prop}
\label{MinimalExpP}
Let $\mathcal{K}^*$ be a reasonable, precompact Fra\"iss\'e expansion class of the Fra\"iss\'e class $\mathcal{K}$ with \fr limits $\mathbf{K}^*, \mathbf{K}$ respectively. Let $G = \aut{\mathbf{K}}$. Then the $G$-flow $X_{\mathcal{K}^*}$ is minimal iff the pair $(\mathcal{K}^*,\mathcal{K})$ has the ExpP.
\end{prop}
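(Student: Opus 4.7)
The plan is to prove the two implications separately. The backward direction (Expansion Property implies minimality) proceeds by a direct density argument on orbits, while the forward direction goes by contrapositive, producing a point of $X_{\mathcal{K}^*}$ whose orbit closure misses a non-empty basic open set.

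Before either direction, I would isolate a short lemma: for any $\mathbf{A} \in \fin{\mathbf{K}}$ and any expansion $\mathbf{A}^* \in \mathcal{K}^*(\mathbf{A})$, the basic open set $N(\mathbf{A}^*)$ is non-empty. This is where reasonableness does its real work: fix an enumeration of $\mathbf{K}$ and inductively apply reasonableness (with $\mathcal{K}^*$ playing the role of the expansion, and successive one-element extensions $\mathbf{C} \subseteq \mathbf{C}\cup\{a\}$ in $\fin{\mathbf{K}}$) to extend $\mathbf{A}^*$ to an expansion of all of $\mathbf{K}$ whose every finite substructure lies in $\mathcal{K}^*$.

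For the backward direction, fix $\mathbf{K}' \in X_{\mathcal{K}^*}$ and a basic open set $N(\mathbf{A}^*)$, where $\mathbf{A}^*$ is an expansion of some $\mathbf{A} \in \fin{\mathbf{K}}$. Apply ExpP to get $\mathbf{B} \in \mathcal{K}$ whose every $\mathcal{K}^*$-expansion embeds $\mathbf{A}^*$. Realize $\mathbf{B}$ as a substructure of $\mathbf{K}$; then $\mathbf{K}'|_B$ is one such expansion, so there is an embedding $h : \mathbf{A}^* \to \mathbf{K}'|_B$. Its $L$-reduct is an embedding $\mathbf{A} \to \mathbf{K}$, which by ultrahomogeneity extends to some $g \in G = \aut{\mathbf{K}}$. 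A direct unwinding of the right logic action shows $(\mathbf{K}' \cdot g)|_A = \mathbf{A}^*$, so $\mathbf{K}' \cdot g \in N(\mathbf{A}^*)$, verifying density of the orbit.

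For the forward direction, suppose ExpP fails, witnessed by some $\mathbf{A}^* \in \mathcal{K}^*$: for every $\mathbf{B} \in \mathcal{K}$ there is an expansion $\mathbf{B}^\diamond \in \mathcal{K}^*(\mathbf{B})$ into which $\mathbf{A}^*$ does not embed. Along the exhaustion $\mathbf{K} = \bigcup_n \mathbf{A}_n$ choose such a bad expansion $\mathbf{A}_n^\diamond$ of each $\mathbf{A}_n$, and use the lemma above to extend it to a point $\mathbf{K}_n^\diamond \in X_{\mathcal{K}^*}$ with $\mathbf{K}_n^\diamond|_{A_n} = \mathbf{A}_n^\diamond$. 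Pass to a convergent subsequence $\mathbf{K}_n^\diamond \to \mathbf{K}^\diamond$ by compactness of $X_{\mathcal{K}^*}$. For each fixed $m$, once $n \geq m$ the restriction $\mathbf{K}_n^\diamond|_{A_m}$ is a substructure of $\mathbf{A}_n^\diamond$, so $\mathbf{A}^*$ does not embed into $\mathbf{K}_n^\diamond|_{A_m}$; taking $n \to \infty$ gives the same for $\mathbf{K}^\diamond|_{A_m}$. Since any embedding of $\mathbf{A}^*$ into $\mathbf{K}^\diamond$ has finite image, $\mathbf{A}^*$ embeds into neither $\mathbf{K}^\diamond$ nor any translate $\mathbf{K}^\diamond \cdot g$. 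But the lemma, applied after transporting $\mathbf{A}^*$ onto an isomorphic copy of its $L$-reduct inside $\mathbf{K}$, shows $N(\mathbf{A}^*) \neq \emptyset$; so the orbit of $\mathbf{K}^\diamond$ avoids a non-empty open set, contradicting minimality.

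I expect two points of subtlety. The main one is the extension lemma, where reasonableness must be invoked inductively and is the only place where one uses that $\mathcal{K}^*$-expansions of finite structures can be glued along inclusions. The secondary subtlety is the bookkeeping in the backward direction: because the right logic action of $g$ corresponds to a left action of $g^{-1}$, one must apply ultrahomogeneity to the $L$-reduct of $h$ (not of $h^{-1}$) to produce the automorphism that carries $\mathbf{K}'$ into $N(\mathbf{A}^*)$.
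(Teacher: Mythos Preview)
The paper does not prove this proposition; Section~3 opens with ``For proofs of the results in this section, see [KPT], [NVT], or [Z],'' and Proposition~\ref{MinimalExpP} is stated without argument. Your proposal is correct and is essentially the standard proof one finds in those references: the extension lemma from reasonableness (plus HP for $\mathcal{K}^*$) to show each $N(\mathbf{A}^*)$ is non-empty, the density argument for ExpP $\Rightarrow$ minimal, and a construction of a point with non-dense orbit for the converse.

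One minor remark on the forward direction: your compactness-plus-subsequence construction of $\mathbf{K}^\diamond$ works, but the more common route (e.g.\ in [KPT]) avoids the limit by using minimality and compactness of $X_{\mathcal{K}^*}$ directly. Given a transported copy $\mathbf{A}^{**}$ of $\mathbf{A}^*$ with underlying set in $\mathbf{K}$, minimality gives finitely many $g_1,\dots,g_k\in G$ with $X_{\mathcal{K}^*}=\bigcup_i N(\mathbf{A}^{**})\cdot g_i^{-1}$; taking $\mathbf{B}\in\fin{\mathbf{K}}$ large enough to contain each $g_i(\mathbf{A})$, any expansion $\mathbf{B}^*\in\mathcal{K}^*(\mathbf{B})$ extends (via your lemma) to some $\mathbf{K}'\in X_{\mathcal{K}^*}$, and then $\mathbf{K}'\cdot g_i\in N(\mathbf{A}^{**})$ for some $i$ yields the required embedding $\mathbf{A}^*\hookrightarrow\mathbf{B}^*$. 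This produces the witnessing $\mathbf{B}$ explicitly rather than by contradiction, but your argument is equally valid.
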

\vspace{3 mm}

Expansion classes are particularly inetesting when $\mathcal{K}^*$ has the following combinatorial property.
\vspace{3 mm}

\begin{defin}
\label{RP}
Let $\mathcal{C}$ be a class of finite structures. 
\vspace{2 mm}

\begin{enumerate}
\item
We say that $\mathbf{A}\in \mathcal{C}$ is a \emph{Ramsey object} if for any $r\geq 2$ and any $\mathbf{B}\in \mathcal{C}$ with $\mathbf{A} \leq \mathbf{B}$, there is $\mathbf{C}\in \mathcal{C}$ with $\mathbf{B}\leq \mathbf{C}$ so that for any coloring $c: \emb{\mathbf{A}, \mathbf{C}}\rightarrow r$, there is $h\in \emb{\mathbf{B}, \mathbf{C}}$ with $|c(h\circ \emb{\mathbf{A}, \mathbf{B}})| = 1$. 
\vspace{2 mm}

\item
We say that $\mathcal{C}$ has the Ramsey Property (RP) if every $\mathbf{A}\in \mathcal{C}$ is a Ramsey object. 
\end{enumerate}
\end{defin}
\vspace{3 mm}

The following is one of the major theorems in [KPT]. This theorem in its full generality is proven in [NVT].
\vspace{3 mm}

\begin{theorem}
\label{KPTCorresp}
Let $\mathcal{K}^*$ be a reasonable, precompact Fra\"iss\'e expansion class of the Fra\"iss\'e class $\mathcal{K}$ with \fr limits $\mathbf{K}^*,\mathbf{K}$, respectively. Let $G = \aut{\mathbf{K}}$. Then $X_{\mathcal{K}^*}\cong M(G)$ iff the pair $(\mathcal{K}^*,\mathcal{K})$ has the ExpP and $\mathcal{K}^*$ has the RP.
\end{theorem}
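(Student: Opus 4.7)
The strategy is to factor the biconditional through two known characterizations. Proposition \ref{MinimalExpP} already gives that the ExpP is equivalent to $X_{\mathcal{K}^*}$ being a minimal $G$-flow, so the theorem reduces to showing that, assuming minimality, $X_{\mathcal{K}^*} \cong M(G)$ iff $\mathcal{K}^*$ has the RP. The bridge is the stabilizer subgroup $H := \aut{\mathbf{K}^*} \leq G$: it is closed, it fixes the canonical point $\mathbf{K}^* \in X_{\mathcal{K}^*}$, and (using reasonableness together with the Extension Property for $\mathbf{K}^*$) its orbit $\mathbf{K}^* \cdot G$ is dense in $X_{\mathcal{K}^*}$.

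The first step is the classical KPT equivalence: $\mathcal{K}^*$ has RP iff $H$ is extremely amenable. For RP $\Rightarrow$ EA, given an $H$-flow $(Y, y_0)$ and a finite open cover $\{U_1, \dots, U_r\}$ of $Y$, define an $r$-coloring of $\emb{\mathbf{A}^*, \mathbf{K}^*}$ by sending $f$ to the least $i$ with $y_0 \cdot \tilde{f} \in U_i$ for any extension $\tilde{f} \in H$ of $f$; applying RP inside a sufficiently large $\mathbf{C}^* \in \mathcal{K}^*$ and running a diagonal argument along the exhaustion $\mathbf{K}^* = \bigcup_n \mathbf{A}_n^*$ yields, by compactness of $Y$, an $H$-fixed point. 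For EA $\Rightarrow$ RP, a putative coloring witnessing failure of RP at $(\mathbf{A}^*, \mathbf{B}^*, r)$ packages into an $H$-flow of ``bad'' $r$-colorings of $\emb{\mathbf{A}^*, \mathbf{K}^*}$ with no $H$-fixed point. The second step is to show that, under minimality of $X_{\mathcal{K}^*}$, the property $X_{\mathcal{K}^*} \cong M(G)$ is equivalent to extreme amenability of $H$. For the $\Leftarrow$ direction, given any minimal $G$-flow $Y$, EA of $H$ applied to the restricted $H$-action yields an $H$-fixed $y_0 \in Y$; the map $\phi: \mathbf{K}^* \cdot G \to Y$ defined by $\mathbf{K}^* \cdot g \mapsto y_0 \cdot g$ is well-defined since both $\mathbf{K}^*$ and $y_0$ are $H$-fixed, and once shown to be left-uniformly continuous it extends to a $G$-map $X_{\mathcal{K}^*} \to Y$, surjective by minimality, so $X_{\mathcal{K}^*}$ is universal. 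For $\Rightarrow$, any minimal $G$-flow $Y$ inherits an $H$-fixed point by pulling back $\mathbf{K}^* \in M(G) \cong X_{\mathcal{K}^*}$ along a $G$-map $M(G) \to Y$, and one uses the inverse-limit description of $S(G)$ as $\varprojlim \beta H_n$ to upgrade this to extreme amenability of $H$.

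The main obstacle I expect is the left-uniform continuity of $\phi$ in the $\Leftarrow$ direction of step two, which is precisely where precompactness of the expansion class is used. One must match the basic neighborhoods $N(\mathbf{A}^*)$ of $\mathbf{K}^*$ (indexed by the finitely many expansions of the $L$-reduct $\mathbf{A}$, by precompactness) with a finite open cover of $Y$ refining the continuity modulus of the $G$-action at $y_0$; without precompactness the relevant index set becomes infinite and the matching breaks down. The remaining ingredients are the standard KPT-style translations between combinatorial Ramsey statements and fixed-point properties of automorphism groups, which are carried out via the exhaustion $\mathbf{K}^* = \bigcup_n \mathbf{A}_n^*$ and compactness of $Y$.
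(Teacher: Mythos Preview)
The paper does not supply its own proof of this theorem; it is quoted as one of the main results of \cite{KPT}, with the full generality established in \cite{NVT}. So there is no in-paper argument to compare against, only the cited literature.

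Your outline is essentially the standard KPT/NVT route: factor through Proposition~\ref{MinimalExpP} to isolate the ExpP, then pass through the closed subgroup $H=\aut{\mathbf{K}^*}$ and the equivalence ``$\mathcal{K}^*$ has RP $\Leftrightarrow$ $H$ is extremely amenable.'' Your sketch of RP $\Leftrightarrow$ EA is the usual one, and in the $\Leftarrow$ direction of your second step you correctly identify precompactness as exactly what makes the map $\mathbf{K}^*\!\cdot g\mapsto y_0\cdot g$ left-uniformly continuous (finitely many expansions of each $\mathbf{A}_n$ means finitely many cosets of the relevant clopen subgroup).

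The genuine soft spot is the $\Rightarrow$ direction of your second step. Showing that every minimal $G$-flow has an $H$-fixed point is immediate from $X_{\mathcal{K}^*}\cong M(G)$, but extreme amenability of $H$ demands a fixed point in every $H$-flow, and an arbitrary $H$-flow need not arise by restricting a $G$-flow. Your appeal to the description $S(G)=\varprojlim\beta H_n$ does not by itself bridge this gap: that inverse limit concerns $G$, not $H$, and there is no obvious co-induction taking compact $H$-flows to compact $G$-flows. In \cite{NVT} this direction is handled by a direct combinatorial argument (a failure of RP at some $\mathbf{A}^*$ is packaged into a minimal $G$-flow factoring $X_{\mathcal{K}^*}$ nontrivially, contradicting universality), and later work (\cite{BYMT}, \cite{MNT}) gives the abstract version: if $M(G)=\widehat{G/H}$ with $H$ co-precompact, then $H$ is extremely amenable. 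Either of these would close the gap; the hand-wave to $\varprojlim\beta H_n$ does not.
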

\vspace{3 mm}

Pairs $(\mathcal{K}^*,\mathcal{K})$ of Fra\"iss\'e classes which are reasonable, precompact, satisfy the ExpP, and where $\mathcal{K}^*$ has the RP are called \emph{excellent}. In particular, if $\mathbf{K} = \flim{\mathcal{K}}$, $G = \aut{\mathbf{K}}$, and there is an expansion class $\mathcal{K}^*$ so that $(\mathcal{K}^*, \mathcal{K})$ is excellent, then $M(G)$ is metrizable. The following converse is one of the major theorems of [Z].
\vspace{3 mm}

\begin{theorem}
\label{KPTConverse}
Let $\mathcal{K}$ be a \fr class with $\mathbf{K} = \flim{\mathcal{K}}$ and $G = \aut{\mathbf{K}}$. If $M(G)$ is metrizable, then there is an expansion class $\mathcal{K}^*$ so that $(\mathcal{K}^*, \mathcal{K})$ is excellent.
\end{theorem}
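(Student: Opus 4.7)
The plan is to use the Melleray--Nguyen Van Th\'e--Tsankov dichotomy: when $M(G)$ is a metrizable minimal flow of a Polish group $G$, a Baire-category argument on the Polish space $M(G)$ (combined with minimality and continuity of the action) produces a point $x_0 \in M(G)$ whose $G$-orbit is comeager in $M(G)$, and a further universal-property argument shows that the closed stabilizer $H := G_{x_0}$ is extremely amenable. I would either cite this result or reprove it in this setting, as it is the essential input.

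Given such $H$, I would define $L^{*} := L \cup \{<\} \cup \{R_O : O \text{ an } H\text{-orbit on some } \omega^n\}$, where $<$ is a linear order on $\omega$ fixed by $H$ (which exists because extreme amenability of $H$ provides a fixed point in the compact space of linear orders on $\omega$). Interpreting each symbol in the obvious way yields an $L^{*}$-structure $\mathbf{K}^{*}$ on $\omega$; then $\mathbf{K}^{*}|_L = \mathbf{K}$ and, since closed subgroups of $S_\infty$ are determined by their orbits on finite tuples, $\aut{\mathbf{K}^{*}} = H$. Setting $\mathcal{K}^{*} := \age{\mathbf{K}^{*}}$, ultrahomogeneity of $\mathbf{K}^{*}$ is immediate because $H$ is transitive on each named orbit, so $\mathcal{K}^{*}$ is a Fra\"iss\'e class with limit $\mathbf{K}^{*}$; every finite substructure of $\mathbf{K}^{*}$ is rigid thanks to the linear order. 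Reasonableness of $(\mathcal{K}^{*}, \mathcal{K})$ follows from Proposition \ref{Reasonable}.

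For precompactness, expansions of a fixed $\mathbf{A} \in \fin{\mathbf{K}}$ in $\mathcal{K}^{*}$ are in natural bijection with $H$-orbits on $\emb{\mathbf{A}, \mathbf{K}}$, equivalently with $G_\mathbf{A}$-orbits on the coset space $G/H$. The $G$-equivariant injection $G/H \hookrightarrow M(G)$ defined by $gH \mapsto g\cdot x_0$ has dense image (comeager implies dense), so $G_\mathbf{A}$-orbits on $G/H$ inject into $G_\mathbf{A}$-orbits on $M(G)$; because $G_\mathbf{A}$ is open in $G$ and $M(G)$ is compact, only finitely many such orbits occur. For the Ramsey property, extreme amenability of $\aut{\mathbf{K}^{*}} = H$ together with rigidity of finite substructures yields RP for $\mathcal{K}^{*}$ via the extreme-amenability direction of the KPT correspondence. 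Finally, for ExpP, by Proposition \ref{MinimalExpP} it suffices to prove $X_{\mathcal{K}^{*}}$ is minimal; I would establish the stronger statement $X_{\mathcal{K}^{*}} \cong M(G)$ via mutually inverse $G$-maps, one coming from the universal property of $M(G)$ sending $x_0$ to $\mathbf{K}^{*}$, and the other from the fact that $\mathbf{K}^{*} \in X_{\mathcal{K}^{*}}$ has stabilizer exactly $H$, so its orbit closure in $X_{\mathcal{K}^{*}}$ is a $G$-flow quotient of $M(G)$; a density argument then forces this orbit closure to be all of $X_{\mathcal{K}^{*}}$.

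The main obstacle is the first step: extracting an extremely amenable stabilizer purely from metrizability of $M(G)$. The Baire-category portion genuinely uses Polishness of $M(G)$, and promoting ``fixed point on $M(G)$'' to ``fixed point on every compact $H$-flow'' requires a careful argument that any compact $H$-flow is a $G$-factor of a larger $G$-flow on which $H$ fixes a point corresponding to $x_0$. The secondary delicate point is showing that the map $M(G) \to X_{\mathcal{K}^{*}}$ is injective: this amounts to verifying that the relations named in $L^{*}$ are rich enough to separate orbits, which reduces to the density of $G x_0$ in $M(G)$ together with the continuity of the $G$-action.
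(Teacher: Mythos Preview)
The paper does not prove this theorem; it is quoted as one of the main results of [Z] and is simply cited. So there is no in-paper argument to compare against, and your proposal should be read as an independent proof sketch rather than a reconstruction.

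Your overall strategy is sound and is essentially the modern route to this result: invoke the comeager-orbit theorem for metrizable $M(G)$ (which in the paper's bibliography is [BYMT], building on [MNT]), take $H$ to be the extremely amenable stabilizer of a point in the comeager orbit, and let $\mathcal{K}^*$ be the age of the canonical $H$-structure on $\omega$. The verifications of reasonableness, precompactness (via openness of $G_\mathbf{A}$-orbits on the compact $M(G)$), and RP (via extreme amenability of $H$ and rigidity) are all correct as you outline them. This differs from the original argument in [Z], which predates [BYMT] and proves the existence of the comeager orbit directly for automorphism groups using the ultrafilter description of $S(G)$ developed in Section~2; your route trades that combinatorics for a black-box citation, which is shorter but less self-contained.

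The one place where your sketch is genuinely incomplete is the ExpP step. You say you will exhibit mutually inverse $G$-maps between $M(G)$ and $X_{\mathcal{K}^*}$, with one ``coming from the universal property of $M(G)$,'' but the universal property only gives maps from $M(G)$ onto \emph{minimal} flows, and minimality of $X_{\mathcal{K}^*}$ is exactly what you are trying to prove. What actually works is this: the orbit map $gH\mapsto \mathbf{K}^*\!\cdot g$ gives a uniformly continuous $G$-equivariant bijection $G/H\to G\!\cdot\!\mathbf{K}^*\subseteq X_{\mathcal{K}^*}$, and because the relations you named separate $H$-cosets, the induced map $\widehat{G/H}\to \overline{G\!\cdot\!\mathbf{K}^*}$ is an isomorphism; since $\widehat{G/H}\cong M(G)$ by [BYMT]/[MNT], the orbit closure is minimal. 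To finish you must still show $\overline{G\!\cdot\!\mathbf{K}^*}=X_{\mathcal{K}^*}$, i.e.\ that every $\mathcal{K}^*$-expansion of $\mathbf{K}$ is a limit of $G$-translates of $\mathbf{K}^*$; this follows from ultrahomogeneity of $\mathbf{K}$ together with the fact that $\mathcal{K}^*=\age{\mathbf{K}^*}$, but it is a separate argument and not the ``density argument'' you gesture at.
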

\vspace{5 mm}

\section{Ellis's problem for random relational structures}
\vspace{5 mm}

In this section, we prove Theorem \ref{PestovSolution}. Let $G = \aut{\mathbf{K}}$ for some \fr structure $\mathbf{K} = \bigcup_n \mathbf{A}_n$. If $T\subseteq H_m$, $n\geq m$, and $f\in \emb{\mathbf{A}_m, \mathbf{A}_n}$, set 
\begin{align*}
\\[-5 mm]
f(T) = \{s\in H_n: s\circ f\in T\}.\\[-5 mm]
\end{align*}
This is a minor abuse of notation for two reasons. First, $f$ already denotes a map from $\mathbf{A}_m$ to $\mathbf{A}_n$. Second, for any $N\geq n$, we have $f\in \emb{\mathbf{A}_m, \mathbf{A}_n}\subseteq \emb{\mathbf{A}_m, \mathbf{A}_N}$, so to understand what is meant by $f(T)$ for $T\subseteq H_m$, the intended range $\mathbf{A}_n$ must be understood from context. 

We will freely identify $\mathcal{P}(H_m)$ with $2^{H_m}$; in particular, $G$ acts on $2^{H_m}$ by right shift, where for $\phi\in 2^{H_m}$, $f\in H_m$, and $g\in G$, we have $\phi\cdot g(f) = \phi(g\cdot f)$.
\vspace{3 mm} 

\begin{defin}
\label{MinimalSet}
We call a subset $S\subseteq H_m$ \emph{minimal} if the flow $\overline{\chi_S\cdot G}\subseteq 2^{H_m}$ is minimal.
\end{defin}
\vspace{3 mm}

The formulation of Ellis's problem we will work with is the one concerning retractions given by Pestov. We will be interested in whether every pair $x\neq y\in S(G)$ can be separated by retractions. A characterization of when this occurs for discrete groups can be found in [Ba] (see Proposition 11). We first prove a similar characterization for automorphism groups in the next two lemmas.

Before proceeding, a quick remark on notation is in order. If $X$ is a $G$-flow, then there is a unique map of ambits $\phi: S(G)\rightarrow E(X)$. If $x\in X$ and $p\in S(G)$, we write $x\cdot p$ for $x\cdot \phi(p)$.
\vspace{3 mm}

\lemma
\label{MinimalSeparate}
Suppose $\alpha, \gamma\in S(G)$ cannot be separated by retractions, and let $S\subseteq H_m$ be minimal. Then $S\in \alpha(m)\Leftrightarrow S\in \gamma(m)$.
\vspace{1 mm}

\proof
Let $M\subseteq S(G)$ be a minimal subflow, and consider the non-empty closed subsemigroup $\{p\in M: \chi_S\cdot p = \chi_S\}$. By Ellis's theorem, let $u\in M$ be an idempotent with $\chi_S\cdot u = \chi_S$. As the left multiplication $\lambda_u: S(G)\rightarrow M$ is a retraction, we must have $u\cdot \alpha = u\cdot \gamma$. Therefore $\chi_S\cdot \alpha = \chi_S\cdot \gamma$, and $\alpha^{-1}(S) = \gamma^{-1}(S) := T$. It follows that $i_m\in T$ iff $S\in \alpha(m)$ iff $S\in \gamma(m)$.
\qedhere
\vspace{5 mm}

For each $m< \omega$, let $B_m\subseteq \mathcal{P}(H_m)$ be the Boolean algebra generated by the minimal subsets of $H_m$. Let $B_m'$ be the Boolean algebra $\{T\subseteq H_m: \exists n\geq m (i^n_m(T)\in B_n)\}$. 
\vspace{3 mm}

\lemma
\label{MinimalBA}
Fix $M\subseteq S(G)$ a minimal subflow. The following are equivalent.
\vspace{1 mm}
\begin{enumerate}
\item
Retractions of $S(G)$ onto $M$ separate points of $S(G)$,
\vspace{2 mm}

\item
For every $m< \omega$, we have $B_m' = \mathcal{P}(H_m)$.
\end{enumerate}
\vspace{1 mm}

\proof
$\neg(1)\Rightarrow \neg(2)$ Suppose that there are $\alpha\neq \gamma \in S(G)$ which cannot be separated by retractions. Find $m< \omega$ with $\alpha(m)\neq \gamma(m)$, and find $T\subseteq H_m$ with $T\in \alpha(m)$, $T\not\in \gamma(m)$. Note that for every $n\geq m$, we have $i^n_m(T)\in \alpha(n)$ and $i^n_m(T)\not\in \gamma(n)$. Towards a contradiction, suppose for some $n\geq m$ that $i^n_m(T)$ was a Boolean combination of minimal sets $A_1,...,A_k\subseteq H_n$. By Lemma \ref{MinimalSeparate}, $\alpha(n)$ and $\gamma(n)$ agree on the membership of each $A_i$, hence also on the membership of $T$, a contradiction.
\vspace{3 mm}

$\neg(2)\Rightarrow \neg(1)$ Suppose that $T\subseteq H_m$ is not in the Boolean algebra $B_m'$. Let $\mathrm{St}(B_m')$ denote the Stone space of $B_m'$. Since $T\not\in B_m'$, we can find $q\in \mathrm{St}(B_m')$ so that every $S\in q$ has $S\cap T\neq \emptyset$ and $S\setminus T = \emptyset$. Form the inverse limit $\varprojlim \mathrm{St}(B_n')$, and find $p\in \varprojlim \mathrm{St}(B_n')$ with $p(m) = q$. Then find $\alpha, \gamma\in \varprojlim \beta H_n$ with $T\in \alpha(m)$, $T\not\in \gamma(m)$ which both extend $p$. Then $\alpha$ and $\gamma$ cannot be separated by retractions.
\qedhere
\vspace{5 mm}

Notice that item (2) of Lemma \ref{MinimalBA} does not depend on $M$. In general, the relation of whether $x\neq y\in S(G)$ can be separated by retractions does not depend on the minimal subflow of $S(G)$ chosen, but we postpone this discussion until the end of section 6 (see the discussion after Theorem \ref{ExistsNotClosed}).

Now suppose that $(\mathcal{K}^*, \mathcal{K})$ is an excellent pair of \fr classes. Given a set of expansions $E\subseteq \mathcal{K}^*(\mathbf{A}_m)$ and $\mathbf{K}'\in X_{\mathcal{K}^*}$, let $H_m(E, \mathbf{K}') = \{f\in H_m: \mathbf{K}'\cdot f \in E\}$. If $n\geq m$ and $\mathbf{A}_n'\in \mathcal{K}^*(\mathbf{A}_n)$, we set $\emb{E, \mathbf{A}_n'} = \{f\in \emb{\mathbf{A}_m, \mathbf{A}_n}: \mathbf{A}_n'\cdot f\in E\}$.
\vspace{3 mm}

\begin{prop}
\label{Minimal}
$S\subseteq H_m$ is minimal iff there is $\mathbf{K}'\in X_{\mathcal{K}^*}$ and $E\subseteq \mathcal{K}^*(\mathbf{A}_m)$ so that $S = H_m(E, \mathbf{K}')$
\end{prop}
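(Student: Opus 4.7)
My approach is to identify all $G$-maps $X_{\mathcal{K}^*} \to 2^{H_m}$ and show they are precisely the maps $\Phi_E(\mathbf{K}'') := \chi_{H_m(E, \mathbf{K}'')}$ for $E \subseteq \mathcal{K}^*(\mathbf{A}_m)$. The proposition then follows since $X_{\mathcal{K}^*} \cong M(G)$ by Theorem \ref{KPTCorresp}, and $M(G)$ is universal among minimal flows.

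For the $(\Leftarrow)$ direction, fix $E \subseteq \mathcal{K}^*(\mathbf{A}_m)$ and verify that $\Phi_E$ is continuous (the coordinate $\Phi_E(\mathbf{K}'')(f)$ depends only on $\mathbf{K}'' \cdot f$, hence on the finitely much data of $\mathbf{K}''$ on $\ran{f}$) and $G$-equivariant (using associativity $(\mathbf{K}'' \cdot g) \cdot f = \mathbf{K}'' \cdot (g \circ f)$). Since $X_{\mathcal{K}^*}$ is minimal, the image $\Phi_E(X_{\mathcal{K}^*}) \subseteq 2^{H_m}$ is a minimal subflow. Thus for any $\mathbf{K}'$, $\overline{\chi_{H_m(E, \mathbf{K}')} \cdot G}$ is an orbit closure inside this minimal set, equal to it, so $H_m(E, \mathbf{K}')$ is minimal.

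For $(\Rightarrow)$, let $S \subseteq H_m$ be minimal and set $Y := \overline{\chi_S \cdot G}$. By universality there is a surjective $G$-map $\pi : X_{\mathcal{K}^*} \to Y \hookrightarrow 2^{H_m}$; the key step is to produce $E$ with $\pi = \Phi_E$, so that any $\mathbf{K}' \in \pi^{-1}(\chi_S)$ satisfies $S = H_m(E, \mathbf{K}')$. By Proposition \ref{Reasonable} (reasonableness), $\mathbf{K}^* = \flim{\mathcal{K}^*}$ may be realized as a point of $X_{\mathcal{K}^*}$, and its $G$-orbit is dense by minimality. Since $\aut{\mathbf{K}^*} \leq G$ fixes $\mathbf{K}^*$, equivariance gives $\pi(\mathbf{K}^*)(g \circ f) = \pi(\mathbf{K}^*)(f)$ for every $g \in \aut{\mathbf{K}^*}$ and $f \in H_m$, so $\pi(\mathbf{K}^*)$ is constant on the $\aut{\mathbf{K}^*}$-orbits in $H_m$. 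Ultrahomogeneity of $\mathbf{K}^*$ makes these orbits biject with $\mathcal{K}^*(\mathbf{A}_m)$ via $f \mapsto \mathbf{K}^* \cdot f$: if $\mathbf{K}^* \cdot f_1 = \mathbf{K}^* \cdot f_2 = \mathbf{A}_m^*$, the $L^*$-isomorphism $f_2 \circ f_1^{-1} : f_1(\mathbf{A}_m) \to f_2(\mathbf{A}_m)$ between finite $L^*$-substructures of $\mathbf{K}^*$ extends to some $g \in \aut{\mathbf{K}^*}$ with $g \circ f_1 = f_2$; conversely every $\mathbf{A}_m^* \in \mathcal{K}^*(\mathbf{A}_m)$ is realized via the extension property. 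Defining $E := \{\mathbf{K}^* \cdot f : f \in H_m, \pi(\mathbf{K}^*)(f) = 1\}$ gives $\pi(\mathbf{K}^*) = \Phi_E(\mathbf{K}^*)$, and since $\pi$ and $\Phi_E$ are continuous $G$-maps agreeing on the dense orbit $\mathbf{K}^* \cdot G$, they agree on all of $X_{\mathcal{K}^*}$.

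The main obstacle is the classification of $G$-maps $X_{\mathcal{K}^*} \to 2^{H_m}$; this rests on the parametrization of $\aut{\mathbf{K}^*}$-orbits on $H_m$ by $\mathcal{K}^*(\mathbf{A}_m)$ via ultrahomogeneity, together with density of $\mathbf{K}^* \cdot G$ in $X_{\mathcal{K}^*}$ to propagate the data at $\mathbf{K}^*$ everywhere. The remaining checks of continuity, equivariance, and minimality of images are routine.
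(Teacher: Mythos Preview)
Your proof is correct and follows essentially the same approach as the paper's: both directions hinge on recognizing $\Phi_E$ as a $G$-map, and for the forward direction both use universality of $X_{\mathcal{K}^*}$ to obtain a $G$-map $\pi$ onto $Y$, observe that $\pi(\mathbf{K}^*)$ is $\aut{\mathbf{K}^*}$-invariant, and invoke ultrahomogeneity of $\mathbf{K}^*$ to identify $\pi(\mathbf{K}^*)$ as $H_m(E,\mathbf{K}^*)$ for some $E$. Your write-up simply expands the details (continuity, equivariance, the explicit bijection between $\aut{\mathbf{K}^*}$-orbits on $H_m$ and $\mathcal{K}^*(\mathbf{A}_m)$) that the paper leaves implicit.
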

\vspace{1 mm}

\proof
One direction is easy once we note that given $E\subseteq \mathcal{K}^*(\mathbf{A}_m)$, the map $\phi_E: X_{\mathcal{K}^*}\rightarrow 2^{H_m}$ given by $\phi_E(\mathbf{K}') = H_m(E, \mathbf{K}')$ is a map of $G$-flows. In the other direction, let $S\subseteq H_m$ be minimal. Then $Y := \overline{\chi_S\cdot G}\subseteq 2^{H_m}$ is a minimal $G$-flow, so fix a $G$-map $\phi: X_{\mathcal{K}^*}\rightarrow Y$. Note that $\phi(\mathbf{K}^*)$ must be a $G^*$-fixed point, so by ultrahomogeneity of $\mathbf{K}^*$ must be of the form $H_m(E, \mathbf{K}^*)$ for some $E\subseteq \mathcal{K}^*(\mathbf{A}_m)$. It follows that $\phi = \phi_E$, so in particular $S = H_m(E, \mathbf{K}')$ for some $\mathbf{K}'\in X_{\mathcal{K}^*}$.
\qedhere
\vspace{4 mm}

The main tool allowing us to prove Theorem \ref{PestovSolution} is an explicit characterization of $M(G)$ for certain autormorphism groups $G$. The following facts can be found in [KPT].
\vspace{3 mm}

\begin{fact}[\cite{KPT}, Theorem 8.1]
Let $\mathcal{K} = \age{\mathbf{K}}$, where $\mathbf{K}$ is any of the structures in the statement of Theorem \ref{PestovSolution}. Let $\mathcal{K}^*$ be the class of linearly ordered members of $\mathcal{K}$. Then $(\mathcal{K}^*, \mathcal{K})$ is an excellent pair. Setting $G = \aut{\mathbf{K}}$, then $M(G) = X_{\mathcal{K}^*}$ is the space of linear orders of $\mathbf{K}$.
\end{fact}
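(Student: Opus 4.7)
The plan is to verify the four conditions of excellence for the pair $(\mathcal{K}^*, \mathcal{K})$---precompactness, reasonableness, ExpP, and RP for $\mathcal{K}^*$---and then apply Theorem \ref{KPTCorresp} to identify $M(G)$ with $X_{\mathcal{K}^*}$. The identification of $X_{\mathcal{K}^*}$ with the space of linear orders on $\mathbf{K}$ is essentially tautological: a point of $X_{\mathcal{K}^*}$ is an interpretation of the new binary relation $<$ on $\mathbf{K}$ whose restriction to every finite substructure is a linear order, which is equivalent to $<$ being a linear order on all of $\mathbf{K}$.

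Precompactness is immediate since each finite $\mathbf{A}$ carries only finitely many linear orderings, and reasonableness is trivial as any partial linear order on an embedded substructure extends to a linear order on the codomain. To see that $\mathcal{K}^*$ is itself a \fr class, HP and JEP are straightforward, and AP reduces to amalgamating the $L$-reducts in $\mathcal{K}$ via free amalgamation---this works directly for pure sets and for $r$-uniform hypergraphs, and for $K_n$-free graphs it works once one notes that any clique in the free amalgam must lie entirely in one of the two sides, so no forbidden $K_n$ is introduced---and then extending the compatible linear orders on the two sides to any linear order on the amalgam.

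The principal content of the Fact is the Ramsey Property for $\mathcal{K}^*$, which is where the genuine combinatorial work resides and which I would regard as the main obstacle. For pure sets with a linear order, $\mathcal{K}^*$ is just the class of finite linear orders, and RP reduces to the classical finite Ramsey theorem applied to $k$-element subsets of $[n]$. For linearly ordered $r$-uniform hypergraphs, RP is the Ne\v{s}et\v{r}il--R\"odl theorem. For linearly ordered $K_n$-free graphs, RP is Ne\v{s}et\v{r}il's Ramsey theorem for $K_n$-free graphs, proved via a partite construction that amalgamates Ramsey witnesses step by step while carefully avoiding the creation of forbidden cliques.

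Finally, ExpP is verified in each case. For pure sets it is immediate: any $\mathbf{B}$ with $|\mathbf{B}| \geq |\mathbf{A}|$ suffices, since any linear order on $\mathbf{B}$ contains every order type on $|\mathbf{A}|$ elements. For ordered $r$-uniform hypergraphs and ordered $K_n$-free graphs, ExpP is extracted from RP by a standard Ramsey-type argument combined with JEP: one finds $\mathbf{D}^* \in \mathcal{K}^*$ containing every ordered expansion of $\mathbf{A}$, applies RP to $\mathbf{A}^* \leq \mathbf{D}^*$ with $|\mathcal{K}^*(\mathbf{A})|$ colors to obtain $\mathbf{B}^* \in \mathcal{K}^*$, and sets $\mathbf{B} := \mathbf{B}^*|_L$; any expansion $\mathbf{B}^{**}$ induces a coloring of $\emb{\mathbf{A}^*, \mathbf{B}^*}$ by $\mathbf{B}^{**}$-order type, and the monochromatic copy of $\mathbf{D}^*$ is analyzed to produce the desired ordered embedding of $\mathbf{A}^*$. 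With all four conditions in hand, Theorem \ref{KPTCorresp} completes the identification $M(G) \cong X_{\mathcal{K}^*}$.
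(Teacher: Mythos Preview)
The paper does not prove this statement at all: it is stated as a \emph{Fact} with attribution to \cite{KPT}, Theorem 8.1, and no argument is given. So there is no ``paper's own proof'' to compare against; the authors simply import the result from Kechris--Pestov--Todor\v{c}evi\'c.

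Your outline is a correct sketch of how the cited result is actually established. The verifications of precompactness, reasonableness, and the \fr axioms for $\mathcal{K}^*$ are routine and you have them right; the identification of RP with the classical Ramsey theorem, the Ne\v{s}et\v{r}il--R\"odl theorem, and Ne\v{s}et\v{r}il's $K_n$-free theorem is exactly the content of \cite{KPT}, \S6. One small caution: your derivation of ExpP from RP is a known trick, but as written it is incomplete---you need the monochromatic copy of $\mathbf{D}^*$ to force the $\mathbf{B}^{**}$-order on that copy to agree with the $\mathbf{B}^*$-order, and this requires an extra observation (typically that the coloring is rigged so that the only way to be monochromatic is to realize the identity order type). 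In practice \cite{KPT} and Ne\v{s}et\v{r}il establish the ordering property for these classes by separate direct arguments rather than by this extraction, so if you were writing this up in full you would either cite those or tighten the RP-to-ExpP step.
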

\vspace{3 mm}

The next theorem is the simplest case of Theorem \ref{PestovSolution}. The following notion will be useful in the proof. Given $T\subseteq H_m$ and $N\geq m$, an \emph{$N$-pattern} of $T$ is a set $S\subseteq \emb{\mathbf{A}_m, \mathbf{A}_N}$ so that there is $y\in H_N$ with $S = \{f\in \emb{\mathbf{A}_m, \mathbf{A}_N}: y\circ f\in T\}$. 
\vspace{3 mm}

\begin{theorem}
\label{AdmissibleSets}
Let $\mathbf{K} = \{x_n: n<\omega\}$ be a countable set with no structure \rm(so $G\cong S_\infty$\rm), and set $\mathbf{A}_m = \{x_i: i< m\}$. Then for every $m\geq 2$, $B_m'\subseteq 2^{H_m}$ is meager. In particular, any $T\subseteq H_m$ belonging to the dense $G_\delta$ set $\{T\subseteq H_m: \overline{\chi_T\cdot G} = 2^{H_m}\}$ is not in $B_m'$.
\end{theorem}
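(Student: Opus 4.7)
The plan is to present $B_m'$ as a countable union of closed, $G$-invariant, nowhere-dense subsets of $2^{H_m}$, the last property via a cardinality gap that only opens up when $m\geq 2$.

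\medskip

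\emph{Parametrization.} By the cited KPT fact, $M(G)=X_{\mathcal{K}^*}$ is the space of linear orders on $\mathbf{K}$, so Proposition \ref{Minimal} identifies the minimal subsets of $H_n$ as the sets $\{f\in H_n: \mathbf{K}'\cdot f\in E\}$ for some linear order $\mathbf{K}'$ on $\omega$ and $E\subseteq \mathcal{K}^*(\mathbf{A}_n)$. Unfolding Boolean combinations of these, $T\in B_m'$ iff there exist $n\geq m$, $k\in\mathbb{N}$, linear orders $\prec_1,\dots,\prec_k$ on $\omega$, and a map $\phi:\mathcal{K}^*(\mathbf{A}_n)^k\to 2$ with
\[
i^n_m(T)(\bar b)=\phi(\prec_1\cdot\bar b,\dots,\prec_k\cdot\bar b)\qquad\text{for every }\bar b\in H_n.
\]
Let $B^{n,k,\phi}\subseteq 2^{H_m}$ denote the set of $T$'s realizable this way. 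Since the triples $(n,k,\phi)$ range over a countable set, $B_m'=\bigcup_{n,k,\phi}B^{n,k,\phi}$.

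\medskip

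\emph{Each $B^{n,k,\phi}$ is closed, $G$-invariant, nowhere dense.} The map $(\prec_j)_j\mapsto\{\bar b:\phi((\prec_j\cdot\bar b)_j)=1\}$ is a continuous map $X_{\mathcal{K}^*}^k\to 2^{H_n}$ with compact domain, hence has closed image; combined with continuity of $T\mapsto i^n_m(T)$ from $2^{H_m}$ to $2^{H_n}$, we obtain that $B^{n,k,\phi}$ is closed. $G$-invariance follows because $T\cdot g$ is realized by the shifted orders $(\prec_j\cdot g)_j$.

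For nowhere density, fix a basic open $U=\{T:T|_{F_0}=S_0\}$ in $2^{H_m}$. I would choose $M\geq n$ with $\ran\bar a\subseteq[M]$ for all $\bar a\in F_0$, and large enough that $2^{|F_1^m|-|F_0|}>(M!)^k$, where $F_1^m:=\emb{\mathbf{A}_m,[M]}$ has $M(M-1)\cdots(M-m+1)\sim M^m$ elements. This is achievable precisely when $m\geq 2$, since $\log_2((M!)^k)\sim Mk\log_2 M$ grows strictly slower than $M^m$. Then for any $T\in B^{n,k,\phi}$, each $\bar a\in F_1^m$ extends (using $M\geq n$) to some $\bar b\in\emb{\mathbf{A}_n,[M]}$, and $T(\bar a)=\phi((\prec_j\cdot\bar b)_j)$ is determined by the restrictions $(\prec_j|_{[M]})_j$; hence $T|_{F_1^m}$ takes at most $(M!)^k$ values as $T$ ranges over $B^{n,k,\phi}$. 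The counting estimate then guarantees an extension of $S_0$ to $F_1^m$ that is unattained, and any $T$ realizing it lies in $U\setminus B^{n,k,\phi}$.

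\medskip

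\emph{Conclusion.} The countable union $B_m'$ is meager. For the ``in particular'' clause, if some $T\in B^{n,k,\phi}$ had dense orbit then $G$-invariance and closedness would force $2^{H_m}=\overline{\chi_T\cdot G}\subseteq B^{n,k,\phi}$, contradicting nowhere density; hence no dense-orbit $T$ can lie in any $B^{n,k,\phi}$, and so none lies in $B_m'$. The main obstacle of the proof is the combinatorial size estimate $2^{|F_1^m|}>(M!)^k$, which is where $m\geq 2$ enters essentially: for $m=1$ one has $|F_1^m|=M$ and $2^M\ll(M!)^k$, so the inequality reverses and this approach collapses, consistent with the theorem's hypothesis.
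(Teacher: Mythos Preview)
Your proof is correct, and the combinatorial heart --- comparing roughly $(M!)^k$ admissible patterns against $2^{\Theta(M^m)}$ possible ones --- is exactly the paper's counting argument. The organization differs slightly: the paper argues by contradiction directly for a dense-orbit $T$, counting $N$-patterns of $i^n_m(T)$ and bounding them by $2^{2^k}(2^{n!}N!)^k$; meagerness of $B_m'$ then follows because $B_m'$ misses the dense $G_\delta$ of dense-orbit points. You instead give an explicit $F_\sigma$ decomposition $B_m'=\bigcup_{n,k,\phi}B^{n,k,\phi}$ into closed $G$-invariant pieces, prove each is nowhere dense by the same count (your absorbing of the $E_i$'s and the Boolean function into $\phi$ cleans up the paper's $2^{2^k}\cdot 2^{kn!}$ factors, leaving only $(M!)^k$), and then deduce the dense-orbit statement from $G$-invariance. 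Your packaging yields the meagerness assertion directly and makes the role of $m\geq 2$ transparent; the paper's version is shorter but leaves the first sentence of the theorem implicit.
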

\vspace{1 mm}

\proof
Let $T\subseteq H_m$ have dense orbit. So for every $N\geq m$, every $S\subseteq \emb{\mathbf{A}_m, \mathbf{A}_N}$ is an $N$-pattern of $T$. Towards a contradiction, suppose for some $n\geq m$ that $i^n_m(T)$ was a Boolean combination of minimal sets $S_1,...,S_k\subseteq H_n$. Let $N \gg n$; we will obtain a contradiction by counting the number of $N$-patterns in $i^n_m(T)$, which by assumption is $2^{m!\binom{N}{m}}\geq 2^{N^m/2}$. 

Since $|\mathcal{K}^*(\mathbf{A}_n)| = n!$ and since there are $N!$ linear orders on $\mathbf{A}_N$, this gives us $2^{n!}N!$ possible $N$-patterns for each $S_i$ by Proposition \ref{Minimal}. Therefore any $N$-pattern of $T$ must be a Boolean combination of some $k$ of these $N$-patterns. Each choice of $k$ patterns results in at most $2^{2^k}$ Boolean combinations, so the total number of possible patterns is at most $2^{2^k}(2^{n!}N!)^k$ patterns. Noting that $n$ and $k$ remain fixed as we let $N$ grow large, we have that asymptotically there are fewer than $N^{kN}$ possible $N$-patterns of $i^n_m(T)$, which is far less than $2^{N^m/2}$, a contradiction.
\qedhere
\vspace{4 mm}

We now consider the case where $\mathbf{K} = \flim{\mathcal{K}}$ is the random $r$-uniform hypergraph or the random for some $r\geq 2$. In order to generalize the arguments in the proof of Theorem \ref{AdmissibleSets}, we will need some control over the exhaustion $\mathbf{K} = \bigcup_n \mathbf{A}_n$. We will do this by not specifying an exhaustion in advance, but instead determining parts of it as we proceed.

We will need the following notion. With $\mathcal{K}$ as above, let $\mathbf{A}\subseteq \mathbf{B}\in \mathcal{K}$, and let $\mathbf{C}\subseteq \mathbf{D}\in \mathcal{K}$. We say that $\mathbf{D}$ \emph{extends $\mathbf{C}$ along $\mathbf{A}\subseteq \mathbf{B}$} if for any $f\in \emb{\mathbf{A}, \mathbf{C}}$, there is an $h\in \emb{\mathbf{B}, \mathbf{D}}$ with $h|_\mathbf{A} = f$. 

Given $\mathbf{C}\in \mathcal{K}$, write $|\mathbf{C}|$ for the number of vertices in $\mathbf{C}$.
\vspace{3 mm}

\begin{lemma}
\label{Extension}
Let $\mathcal{K}$ be the class of $r$-uniform hypergraphs for some $r\geq 2$. Let $e\subseteq \mathbf{B}\in \mathcal{K}$, where $e\in \mathcal{K}$ is the hypergraph on $r$ vertices consisting of an edge, and let $\mathbf{C}\in \mathcal{K}$ with $|\mathbf{C}| = N$. Then there is $\mathbf{D}\in \mathcal{K}$ extending $\mathbf{C}$ along $e\subseteq \mathbf{B}$ with $|\mathbf{D}| \leq cN^{r-1}$ for some constant $c$ depending only on $|\mathbf{B}|$.
\end{lemma}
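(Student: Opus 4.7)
The plan is to build $\mathbf{D}$ by gluing to $\mathbf{C}$ one private batch of $b-r$ fresh vertices for each ordered $(r-1)$-tuple of distinct vertices of $\mathbf{C}$, where $b = |\mathbf{B}|$. After fixing an enumeration $V(\mathbf{B}) = \{v_1, \ldots, v_b\}$ with $e = \{v_1, \ldots, v_r\}$, for each ordered tuple $\vec{u} = (u_1, \ldots, u_{r-1})$ of distinct elements of $V(\mathbf{C})$ I would introduce fresh vertices $w^{\vec{u}}_{r+1}, \ldots, w^{\vec{u}}_{b}$. Taking $V(\mathbf{D})$ to be $V(\mathbf{C})$ together with all these new vertices gives $|\mathbf{D}| \leq N + (b-r) N^{r-1} \leq b \cdot N^{r-1}$ immediately, so $c = b$ suffices.

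Next, for each $f \in \emb{e, \mathbf{C}}$, setting $u_i := f(v_i)$, I would define a candidate extension $h_f \colon V(\mathbf{B}) \to V(\mathbf{D})$ by $h_f(v_i) = u_i$ for $i \leq r$ and $h_f(v_i) = w^{(u_1, \ldots, u_{r-1})}_i$ for $i > r$. The edges of $\mathbf{D}$ would then be declared to be exactly the sets $h_f(E)$ as $f$ ranges over $\emb{e, \mathbf{C}}$ and $E$ ranges over hyperedges of $\mathbf{B}$. Choosing $E = e$ shows that every hyperedge of $\mathbf{C}$ is retained, so $\mathbf{C}$ remains a substructure of $\mathbf{D}$; and by construction each $h_f$ trivially sends hyperedges of $\mathbf{B}$ to hyperedges of $\mathbf{D}$.

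The substantive step, and the only real obstacle, will be verifying that each $h_f$ is a genuine embedding rather than merely a homomorphism, i.e.\ that no $r$-set $S \subseteq V(\mathbf{B})$ which is a non-edge of $\mathbf{B}$ has $h_f(S)$ an edge of $\mathbf{D}$. Suppose $h_f(S) = h_{f'}(E')$ for some $f' \in \emb{e, \mathbf{C}}$ and some hyperedge $E'$ of $\mathbf{B}$. Any new vertex appearing in the image carries its $(r-1)$-tuple superscript visibly; equating the superscripts forces $f$ and $f'$ to agree on $v_1, \ldots, v_{r-1}$. The $V(\mathbf{C})$-part of the common image then determines $S \cap \{v_1, \ldots, v_r\}$: in the subcase $v_r \in S$, the vertex $u_r = f(v_r)$ must also appear on the $f'$-side, forcing $f(v_r) = f'(v_r)$, hence $f = f'$ and $S = E'$ by injectivity of $h_f$; in the subcase $v_r \notin S$, the conclusion $S = E'$ falls out directly from matching the remaining $V(\mathbf{C})$-coordinates. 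The residual possibility $S \subseteq \{v_1, \ldots, v_r\}$ collapses to $S = e$, already an edge, so is harmless.

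The construction is essentially dictated by the target bound: $N^{r-1}$ is precisely the number of ordered $(r-1)$-tuples available, and these can record every coordinate of $f$ except the last, which is exactly the coordinate recoverable from the $V(\mathbf{C})$-part of the image. I expect the only real work will be carefully disposing of the case analysis above; the construction and the cardinality estimate are routine.
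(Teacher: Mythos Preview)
Your argument is correct and takes a genuinely different route from the paper's. The paper invokes Baranyai's theorem to decompose the edge set of $\mathbf{C}$ into roughly $N^{r-1}$ matchings, and then attaches one fresh copy of $\mathbf{B}\setminus e$ for each matching and each of the $r!$ orderings of the edge; the vertex-disjointness of a matching is what prevents two extensions from colliding on the shared new vertices. Your construction instead indexes the fresh copies by ordered $(r-1)$-tuples from $V(\mathbf{C})$, recording every coordinate of $f$ except $f(v_r)$, which remains visible because it lies in $V(\mathbf{C})$. This sidesteps Baranyai's theorem entirely, is more elementary, and gives the cleaner constant $c=|\mathbf{B}|$; the case analysis you outline goes through exactly as you describe.

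One small point worth making explicit: for $\mathbf{C}$ to be an \emph{induced} substructure of $\mathbf{D}$ you also need that no non-edge of $\mathbf{C}$ becomes an edge of $\mathbf{D}$. This falls out of the same analysis---any edge of $\mathbf{D}$ lying entirely inside $V(\mathbf{C})$ is of the form $h_{f'}(E')$ with $E'\subseteq\{v_1,\ldots,v_r\}$, forcing $E'=e$, so the edge is $f'(e)$ and hence already an edge of $\mathbf{C}$---but you should say so.
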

\vspace{1 mm}

\proof
Recall that given an $r$-uniform hypergraph $\mathbf{C}$, a \emph{matching} is a subset of the edges of $\mathbf{C}$ so that each vertex is included in at most one edge. By Baranyai's theorem [B], the edge set of $\mathbf{C}$ can be partitioned into $M_1,...,M_\ell$ with each $M_i$ a matching and with $\ell \leq \binom{r\lceil N/r\rceil}{r}/\lceil N/r\rceil) \approx c_0N^{r-1}$. For each $i\leq \ell$ and $j\leq \ell!$, let $D_i^j$ be a set of $|\mathbf{B}|-r$ new vertices. We will define the hypergraph $\mathbf{D}$ on vertex set $\mathbf{C}\cup \bigcup_{i\leq \ell}\bigcup_{j\leq r!} D_i^j$. First add edges to $\mathbf{D}$ so that $D_i^j \cong \mathbf{B}\setminus e$. For each $e'\in M_i$, enumerate the embeddings $f_j: e\rightarrow \mathbf{C}$ with range $e'$. Add edges to $\mathbf{D}$ so that each $f_j$ extends to an embedding $h_j: \mathbf{B}\rightarrow \mathbf{D}$ with range $e'\cup D_i^j$. This is possible since each $M_i$ is a matching. The hypergraph $\mathbf{D}$ has $|\mathbf{D}|\leq cN^{r-1}$ as desired.
\qedhere
\vspace{3 mm}

\begin{theorem}
\label{Hypergraphs}
Let $\mathcal{K}$ be the class of $r$-uniform hypergraphs for $r\geq 2$, with $\mathbf{K} = \flim{\mathcal{K}}$. Let $\mathbf{A}_r\subseteq \mathbf{K}$ be an edge on $r$ vertices. Then if $T\subseteq H_r$ has dense orbit, then $T\not\in B_r'$.
\end{theorem}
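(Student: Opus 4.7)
The plan is to follow the counting strategy from Theorem \ref{AdmissibleSets}, using Lemma \ref{Extension} to compensate for the fact that an $r$-uniform hypergraph on $M$ vertices can have far fewer than $\binom{M}{r}$ edges, so that embeddings of the edge $\mathbf{A}_r$ are potentially scarce. Suppose toward a contradiction that $T\in B_r'$: there exist $n\geq r$ and minimal sets $S_1,\ldots,S_k\subseteq H_n$ whose Boolean combination equals $i^n_r(T)$. Let $\mathcal{K}^*$ be the (excellent) expansion class of linearly ordered $r$-uniform hypergraphs, so that by Proposition \ref{Minimal} each $S_i$ has the form $H_n(E_i,\mathbf{K}'_i)$ for some $\mathbf{K}'_i\in X_{\mathcal{K}^*}$ and $E_i\subseteq \mathcal{K}^*(\mathbf{A}_n)$.

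I would then choose the tail of the exhaustion adaptively. For a large parameter $N$, let $\mathbf{C} = K^{(r)}_N$, the complete $r$-uniform hypergraph on $N$ vertices, and apply Lemma \ref{Extension} with $e = \mathbf{A}_r$ and $\mathbf{B} = \mathbf{A}_n$ to produce $\mathbf{D}\in \mathcal{K}$ extending $\mathbf{C}$ along $\mathbf{A}_r\subseteq \mathbf{A}_n$, with $M := |\mathbf{D}|\leq c\,N^{r-1}$. By ultrahomogeneity of $\mathbf{K}$, I would arrange the exhaustion so that $\mathbf{A}_M\cong \mathbf{D}$ with $\mathbf{A}_n$ sitting inside $\mathbf{A}_M$ as the substructure guaranteed by Lemma \ref{Extension} (via the fixed inclusion $\mathbf{A}_r\subseteq \mathbf{C}$).

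Next I would count $M$-patterns. Since $T$ has dense orbit, every subset of $\emb{\mathbf{A}_r,\mathbf{C}}\subseteq \emb{\mathbf{A}_r,\mathbf{A}_M}$ arises as the restriction of some $M$-pattern of $T$, giving $2^{r!\binom{N}{r}}$ distinct restrictions. Lemma \ref{Extension} supplies, for every $g\in \emb{\mathbf{A}_r,\mathbf{C}}$, an $h_g\in \emb{\mathbf{A}_n,\mathbf{A}_M}$ with $h_g\circ i^n_r = g$; so if $y,y'\in H_M$ yield $T$-patterns differing at such a $g$, the corresponding $M$-patterns of $i^n_r(T)$ differ at $h_g$. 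Hence $i^n_r(T)$ has at least $2^{r!\binom{N}{r}} = 2^{\Omega(N^r)}$ distinct $M$-patterns. On the other hand, the $M$-pattern of $S_i$ at $y$ is determined by $\mathbf{K}'_i\cdot y \in \mathcal{K}^*(\mathbf{A}_M)$, giving at most $|\mathcal{K}^*(\mathbf{A}_M)| = M!$ patterns per $S_i$, hence at most $2^{2^k}(M!)^k = 2^{O(N^{r-1}\log N)}$ patterns for any Boolean combination of $S_1,\ldots,S_k$. Since $N^r$ dominates $N^{r-1}\log N$, taking $N$ large yields the desired contradiction.

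The main obstacle compared to Theorem \ref{AdmissibleSets} is that the naive choice $\mathbf{A}_M = K^{(r)}_M$ would force either a large $M$, blowing up the upper bound on Boolean combinations, or a small $M$ with too few embeddings of $\mathbf{A}_r$ extending to $\mathbf{A}_n$, weakening the lower bound on patterns of $i^n_r(T)$. Lemma \ref{Extension} threads this needle precisely: it builds an $\mathbf{A}_M$ whose size is only $O(N^{r-1})$ while ensuring every edge of the dense sub-hypergraph $\mathbf{C}\subseteq \mathbf{A}_M$ extends to a copy of $\mathbf{A}_n$, which is exactly what the counting argument needs.
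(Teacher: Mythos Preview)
Your argument is correct and follows essentially the same route as the paper: assume $i^n_r(T)$ is a Boolean combination of minimal sets, choose a large $\mathbf{C}$ rich in edges, invoke Lemma \ref{Extension} to embed $\mathbf{C}$ in some $\mathbf{A}_M$ with $M = O(N^{r-1})$ in which every edge of $\mathbf{C}$ extends to a copy of $\mathbf{A}_n$, and then compare the $2^{\Omega(N^r)}$ lower bound on $M$-patterns of $i^n_r(T)$ coming from the dense orbit of $T$ against the $2^{O(N^{r-1}\log N)}$ upper bound coming from Proposition \ref{Minimal}. The only cosmetic differences are that the paper takes $\mathbf{C} = \mathbf{A}_N$ to be any hypergraph with at least $\binom{N}{r}/2$ edges (you take it complete), and the paper carries an extra harmless $2^{n!}$ factor in the upper bound which you correctly omit since the sets $E_i$ are fixed.
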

\vspace{-2 mm}

\begin{rem}
Though we are not specifying an exhaustion in advance, we will still use some of the associated notation. In particular, when we write $\mathbf{A}_m$ for some $m< \omega$, we mean a subgraph of $\mathbf{K}$ on $m$ vertices. 
\end{rem}
\vspace{-2 mm}

\proof
Suppose towards a contradiction that there were some graph $\mathbf{A}_n\supseteq \mathbf{A}_r$ so that $i^n_r(T)$ was a Boolean combination of minimal sets $B_1,...,B_k\subseteq H_n$. Let $N\gg n$, and fix a graph $\mathbf{A}_N\supseteq \mathbf{A}_n$ with at least $\binom{N}{r}/2$ edges. Let $\mathbf{A}_{N'}\supseteq \mathbf{A}_N$ extend $\mathbf{A}_N$ along $\mathbf{A}_r\subseteq \mathbf{A}_n$ with $N'\approx cN^{r-1}$ as guaranteed by Lemma \ref{Extension}. We will obtain a contradiction by counting the number of $N'$-patterns in $i^n_r(T)$. Exactly as in the proof of Theorem \ref{AdmissibleSets}, there are fewer than $2^{2^k}(2^{n!}N'!)^k < (N')^{kN'}\approx (cN^{r-1})^{kcN^{r-1}}$ many $N'$-patterns. But since $T$ has dense orbit, there must be at least $2^{\binom{N}{r}/2} > 2^{dN^r}$ $N'$-patterns in $i^n_r(T)$ for $d$ some constant, a contradiction.
\qedhere
\vspace{4 mm}

We next turn to the class $\mathcal{K}$ of $K_r$-free graphs for some $r\geq 3$. We will need a result similar to Lemma \ref{Extension}, but the given proof will not work as the construction doesn't preserve being $K_r$-free. 
\vspace{3 mm}

\begin{lemma}
\label{ForbExtension}
Let $\mathcal{K}$ be the class of $K_r$-free graphs for some $r\geq 3$. Let $e\subseteq \mathbf{B}\in \mathcal{K}$, where $e$ is an edge, and let $\mathbf{C}\in \mathcal{K}$ with $|\mathbf{C}| = N$. Then there is $\mathbf{D}\in \mathcal{K}$ extending $\mathbf{C}$ along $e\subseteq \mathbf{B}$ with $|\mathbf{D}|\leq cN^{2(r-1)/(r-2)}$.
\end{lemma}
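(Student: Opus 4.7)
My plan is to adapt the construction of Lemma \ref{Extension} but replace the Baranyai-shared extension sets with vertex-disjoint ones. For each embedding $f\in\emb{e,\mathbf{C}}$ (and there are at most $2|E(\mathbf{C})|\leq N^2$ such), I would introduce a fresh set $V_f$ of $|\mathbf{B}|-2$ vertices, together with a chosen bijection $V_f \leftrightarrow \mathbf{B}\setminus e$. I then form $\mathbf{D}$ on vertex set $\mathbf{C} \sqcup \bigsqcup_f V_f$, taking as edges the edges of $\mathbf{C}$ together with, for each $f$, edges inside $V_f$ and between $V_f$ and $f(e)$ chosen so that the induced subgraph on $f(e)\cup V_f$ is an isomorphic copy of $\mathbf{B}$. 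Crucially, no edges are added between distinct $V_f$'s, and each $V_f$ is connected to $\mathbf{C}$ only via $f(e)$.

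With this construction, the extension property is immediate: the map $h_f:\mathbf{B}\to\mathbf{D}$ agreeing with $f$ on $e$ and with the bijection on $\mathbf{B}\setminus e$ is a structure-preserving injection by design. For $K_r$-freeness, I argue that any $K_r$ in $\mathbf{D}$ meets at most one $V_f$ (since distinct $V_f$'s are pairwise non-adjacent), and if it meets some $V_f$, then all of its $\mathbf{C}$-vertices must lie in $f(e)$ (the only $\mathbf{C}$-neighbors of $V_f$). Hence the $K_r$ is contained in the induced copy of $\mathbf{B}$ on $f(e)\cup V_f$, contradicting $\mathbf{B}\in\mathcal{K}$; a $K_r$ entirely inside $\mathbf{C}$ is ruled out by $\mathbf{C}\in\mathcal{K}$. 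For the size bound, $|\mathbf{D}|\leq N + 2(|\mathbf{B}|-2)|E(\mathbf{C})| \leq cN^2$, and since $2\leq 2(r-1)/(r-2)$ for all $r\geq 3$, this is well within $cN^{2(r-1)/(r-2)}$.

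The main point to notice---more a subtlety than a true obstacle---is why the Baranyai decomposition from Lemma \ref{Extension} cannot be transplanted to the $K_r$-free setting. Sharing an extension set $D_i^j$ across all edges of a matching $M_i$ would force each vertex $v\in D_i^j$ to be adjacent in $\mathbf{D}$ to the corresponding endpoint of every edge of $M_i$. If those endpoints happen to contain a $K_{r-1}$ in $\mathbf{C}$---perfectly possible since $\mathbf{C}$ is only assumed $K_r$-free---then $\{v\}$ together with that $K_{r-1}$ becomes a new $K_r$ in $\mathbf{D}$. The vertex-disjoint construction sidesteps this at the cost of $O(N^2)$ rather than Baranyai's $O(N)$ total vertex count, but the target exponent $2(r-1)/(r-2)$ comfortably accommodates $N^2$ for all $r\geq 3$.
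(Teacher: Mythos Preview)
Your construction is correct and proves the lemma exactly as stated: the $K_r$-freeness argument is clean, and $|\mathbf{D}|\leq cN^2\leq cN^{2(r-1)/(r-2)}$ since $2(r-1)/(r-2)\geq 2$ for every $r\geq 3$.

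The paper, however, takes a genuinely different route. Instead of attaching a fresh copy of $\mathbf{B}\setminus e$ to each of the $\leq N^2$ embeddings of $e$ into $\mathbf{C}$, it first bounds the chromatic number of $\mathbf{C}$: by the Ajtai--Koml\'os--Szemer\'edi bound $R(r,n)=o(n^{r-1})$, any $K_r$-free graph on $N$ vertices has an independent set of size $\gtrsim N^{1/(r-1)}$, whence $\chi(\mathbf{C})\leq \ell\approx N^{(r-2)/(r-1)}$. Writing $\mathbf{C}=C_1\sqcup\cdots\sqcup C_\ell$ with each $C_i$ independent, the paper attaches one copy of $\mathbf{B}\setminus e$ to each \emph{ordered pair of colour classes} $(i,j)$, sharing that copy among all edges from $C_i$ to $C_j$. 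Independence of the colour classes is exactly what makes this sharing compatible with $K_r$-freeness, and it yields $|\mathbf{D}|\lesssim \ell^2\approx N^{2(r-2)/(r-1)}$.

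What each approach buys: yours is entirely elementary and avoids the AKS input. The paper's buys a strictly sub-quadratic exponent $2(r-2)/(r-1)<2$, and this is not cosmetic. In Theorem~\ref{ForbGraphs} the contradiction requires $(N')^{kN'}\ll 2^{\binom{N}{2}/r}$, i.e.\ $N'\log N'=o(N^2)$; this fails for $N'\sim N^2$ but holds for $N'\sim N^{2(r-2)/(r-1)}$. Indeed, the proof of Theorem~\ref{ForbGraphs} quotes the exponent $2(r-2)/(r-1)$, so the exponent $2(r-1)/(r-2)$ in the lemma statement is evidently a transposition. Your argument establishes the lemma as literally written but not the bound the paper actually needs downstream.
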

\vspace{1 mm}

\proof
Let $R(r, n)$ be the Ramsey number of $r$ and $n$. In [AKS], it is shown that $R(r, n) = o(n^{r-1})$. Since $\mathbf{C}$ is $K_r$-free, this implies that $\mathbf{C}$ has an independent set of size at least $N^{1/(r-1)}$. By repeatedly removing independent sets, we see that the chromatic number of $\mathbf{C}$ is at most $\ell \approx \frac{r-1}{r-2}N^{(r-2)/(r-1)}$; one can see this by solving the differential equation $dy/dt = -y^{1/(r-1)}$ and setting $y(0) = N$.

Write $\mathbf{C} = C_1\sqcup\cdots \sqcup C_\ell$ so that each $C_i$ is an independent set. For every ordered pair $(i, j)$ of distinct indices with $i, j\leq \ell$, let $D_{(i,j)}$ be a set of $|\mathbf{B}|-2$ new vertices. We will define the graph $\mathbf{D}$ on vertex set $\mathbf{C}\cup\bigcup_{\{i,j\}\in [\ell]^2} (D_{(i,j)}\cup D_{(j,i)})$. First add edges to $\mathbf{D}$ so that $D_{(i,j)}\cong \mathbf{B}\setminus e$; fix $h': \mathbf{B}\setminus e\rightarrow D_{(i,j)}$ an isomorphism. Write $e = \{a,b\}$; if $f: e\rightarrow \mathbf{C}$ with $f(a) = i$ and $f(b) = j$, then add edges to $\mathbf{D}$ so that $h'\cup f := h: \mathbf{B}\rightarrow \mathbf{D}$ is an embedding with range $f(e)\cup D_{(i,j)}$. The graph $\mathbf{D}$ is $K_r$-free and has $|\mathbf{D}|\leq cN^{2(r-2)/(r-1)}$ as desired.
\qedhere
\vspace{3 mm}

\begin{theorem}
\label{ForbGraphs}
Let $\mathcal{K}$ be the class of $K_r$-free graphs for some $r\geq 3$, with $\mathbf{K} = \flim{\mathcal{K}}$. Let $\mathbf{A}_2\subseteq \mathbf{K}$ be an edge. Then if $T\subseteq H_2$ has dense orbit, then $T\not\in B_2'$.
\end{theorem}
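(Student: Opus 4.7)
The plan is to follow the template established in Theorem~\ref{Hypergraphs}: suppose for contradiction that, for some finite subgraph $\mathbf{A}_n \subseteq \mathbf{K}$ containing $\mathbf{A}_2$, the set $i^n_2(T) \subseteq H_n$ is a Boolean combination of minimal sets $B_1,\ldots,B_k \subseteq H_n$. I will derive a contradiction by comparing upper and lower bounds on the number of $N'$-patterns of $i^n_2(T)$ for a suitably large $N'$.

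For the construction, first I would fix $N \gg n$ and pick a $K_r$-free graph $\mathbf{A}_N \supseteq \mathbf{A}_n$ with many edges: Tur\'an's theorem provides the graph $T(N-n, r-1)$ with $\sim \tfrac{r-2}{2(r-1)}N^2$ edges, whose disjoint union with $\mathbf{A}_n$ gives a $K_r$-free $\mathbf{A}_N$ with $|E(\mathbf{A}_N)| \geq c_r N^2$ for a constant $c_r > 0$ depending only on $r$; by ultrahomogeneity I may embed this into $\mathbf{K}$. Then I would apply Lemma~\ref{ForbExtension} with $e = \mathbf{A}_2$, $\mathbf{B} = \mathbf{A}_n$, and $\mathbf{C} = \mathbf{A}_N$ to obtain $\mathbf{A}_{N'} \supseteq \mathbf{A}_N$ extending $\mathbf{A}_N$ along $\mathbf{A}_2 \subseteq \mathbf{A}_n$, with $N' \leq cN^{2(r-2)/(r-1)}$ (the exponent actually produced in the proof of that lemma). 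The upper bound on $N'$-patterns follows exactly as in Theorem~\ref{Hypergraphs}: each minimal set yields at most $2^{n!}\cdot N'!$ such patterns by Proposition~\ref{Minimal}, so Boolean combinations of $k$ minimal sets yield at most $2^{2^k}(2^{n!} N'!)^k = 2^{O(N^{2(r-2)/(r-1)}\log N)}$.

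For the lower bound I would exploit density of the orbit of $T$: given $S \subseteq \emb{\mathbf{A}_2, \mathbf{A}_N}$, I choose $g_S \in G$ with $g_S \circ f \in T \Leftrightarrow f \in S$ for every $f \in \emb{\mathbf{A}_2, \mathbf{A}_N}$, and set $y_S = g_S|_{\mathbf{A}_{N'}} \in H_{N'}$. The resulting $N'$-pattern $P_S$ of $i^n_2(T)$ at $y_S$ then satisfies $h \in P_S \Leftrightarrow g_S \circ (h|_{\mathbf{A}_2}) \in T$ for every $h \in \emb{\mathbf{A}_n, \mathbf{A}_{N'}}$, which reduces to $h|_{\mathbf{A}_2} \in S$ whenever $h(\mathbf{A}_2) \subseteq \mathbf{A}_N$. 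The crucial use of Lemma~\ref{ForbExtension} is the separation step: for distinct $S, S'$ and any $g \in S \triangle S'$, the lemma supplies $h \in \emb{\mathbf{A}_n, \mathbf{A}_{N'}}$ with $h|_{\mathbf{A}_2} = g$, which witnesses $P_S \neq P_{S'}$. Thus $S \mapsto P_S$ is injective, producing at least $2^{|\emb{\mathbf{A}_2, \mathbf{A}_N}|} \geq 2^{c_r N^2}$ distinct $N'$-patterns.

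The contradiction then follows from the asymptotic comparison $c_r N^2 > O(N^{2(r-2)/(r-1)}\log N)$, which rearranges to $N^{2/(r-1)} \gg \log N$; since $2/(r-1) > 0$ for every $r \geq 3$, this holds once $N$ is large enough. The main obstacle relative to the hypergraph case is that Lemma~\ref{ForbExtension} yields a noticeably weaker extension bound (the exponent $2(r-2)/(r-1)$ approaches $2$ as $r \to \infty$), so the asymptotic margin between the two pattern counts narrows with $r$. Nevertheless, Tur\'an's theorem still supplies a $\Theta(N^2)$ lower bound on the edge count of $\mathbf{A}_N$, which stays strictly ahead of the extension-count polynomial of exponent $2(r-2)/(r-1) < 2$, keeping the argument alive for every $r \geq 3$.
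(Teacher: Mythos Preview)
Your proposal is correct and follows essentially the same approach as the paper: assume $i^n_2(T)$ is a Boolean combination of minimal sets, pick a dense $K_r$-free $\mathbf{A}_N$ (the paper just asserts $\geq\binom{N}{2}/r$ edges, you cite Tur\'an explicitly), apply Lemma~\ref{ForbExtension} to get $\mathbf{A}_{N'}$ with $N'\approx cN^{2(r-2)/(r-1)}$, and compare the $(N')^{kN'}$ upper bound against the $2^{\Theta(N^2)}$ lower bound coming from the dense orbit of $T$. Your write-up is more detailed---in particular you spell out the injection $S\mapsto P_S$ and the separation step using the extension property, which the paper leaves implicit---and you correctly use the exponent $2(r-2)/(r-1)$ produced in the \emph{proof} of Lemma~\ref{ForbExtension} rather than the exponent in its statement.
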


As in the proof of Theorem \ref{Hypergraphs}, we will not specify an exhaustion in advance, but we will still use some of the notational conventions.

\proof
Suppose towards a contradiction that there were some graph $\mathbf{A}_n\supseteq \mathbf{A}_2$ so that $i^n_2(T)$ was a Boolean combination of minimal sets $B_1,...,B_k\subseteq H_n$. Let $N\gg n$, and fix a graph $\mathbf{A}_N\supseteq \mathbf{A}_n$ with at least $\binom{N}{2}/r$ edges. Let $\mathbf{A}_{N'}\supseteq \mathbf{A}_N$ extend $\mathbf{A}_N$ along $\mathbf{A}_2\subseteq \mathbf{A}_n$ with $N'\approx cN^{2(r-2)/(r-1)}$ as guaranteed by Lemma \ref{ForbExtension}. We now obtain a contradiction by counting $N'$-patterns. Once again, there are fewer than $(N')^{kN'}$ many $N'$-patterns in $i^n_2(T)$, which contradicts the fact that there are at least $2^{\binom{N}{2}/r}$ many $N'$-patterns.
\qedhere
\vspace{4 mm}

We end this section with a conjecture. While it is a strict sub-conjecture of Conjecture \ref{PestovCon}, we think it might be more easily approached.
\vspace{3 mm}

\begin{conj}
\label{NewConj}
Let $G$ be a closed, non-compact subgroup of $S_\infty$ with metrizable universal minimal flow. Then $S(G)\not\cong E(M(G))$.
\end{conj}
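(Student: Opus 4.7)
The plan is to generalize the counting arguments of Theorems \ref{AdmissibleSets}, \ref{Hypergraphs}, and \ref{ForbGraphs} to an arbitrary excellent pair. Suppose for contradiction that $S(G) \cong E(M(G))$. By Pestov's retraction criterion together with Lemma \ref{MinimalBA}, this forces $B_m' = \mathcal{P}(H_m)$ for every $m < \omega$. Since $M(G)$ is metrizable, Theorem \ref{KPTConverse} yields an excellent pair $(\mathcal{K}^*, \mathcal{K})$ with $\mathbf{K} = \flim{\mathcal{K}}$ and $G = \aut{\mathbf{K}}$. Non-compactness of $G$, a closed subgroup of $S_\infty$, is equivalent to $G$ being non-profinite, which in turn means there is some $m$ with $H_m$ infinite. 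Fix such an $m$ and choose $T \subseteq H_m$ with dense orbit in $2^{H_m}$; this is a comeager condition, so such a $T$ exists.

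By our standing assumption, $T \in B_m'$, so there is $n \geq m$ and minimal sets $B_1, \ldots, B_k \subseteq H_n$ such that $i^n_m(T)$ is a Boolean combination of $B_1, \ldots, B_k$. The next step is to establish a general \emph{extension lemma} in the spirit of Lemmas \ref{Extension} and \ref{ForbExtension}: for sufficiently large $N$, choose $\mathbf{A}_N \supseteq \mathbf{A}_n$ so as to maximize $|\emb{\mathbf{A}_m, \mathbf{A}_N}|$, then produce $\mathbf{A}_{N'} \supseteq \mathbf{A}_N$ in $\mathcal{K}$ so that every subset $S \subseteq \emb{\mathbf{A}_m, \mathbf{A}_N}$ arises as an $N'$-pattern of $i^n_m(T)$. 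This should be approachable by iterating amalgamation in $\mathcal{K}$ once per embedding of $\mathbf{A}_n$ that we need to produce. Granted the extension lemma with polynomial-type control $|\mathbf{A}_{N'}| \leq p(N)$, Proposition \ref{Minimal} bounds the number of $N'$-patterns of each $B_j$ by $|\mathcal{K}^*(\mathbf{A}_{N'})|$, so $i^n_m(T)$ has at most $2^{2^k}(2^{|\mathcal{K}^*(\mathbf{A}_n)|}\,|\mathcal{K}^*(\mathbf{A}_{N'})|)^k$ many $N'$-patterns. On the other hand, dense orbit produces $2^{|\emb{\mathbf{A}_m, \mathbf{A}_N}|}$ many $N'$-patterns. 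The desired contradiction reduces to the inequality
\[
\lvert \emb{\mathbf{A}_m, \mathbf{A}_N} \rvert \;>\; k \log_2 \lvert \mathcal{K}^*(\mathbf{A}_{N'}) \rvert + O_{k,n}(1),
\]
holding for some $N$, with $n$ and $k$ fixed.

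The main obstacle is precisely this growth comparison in full generality. Non-compactness of $G$ guarantees that $|\emb{\mathbf{A}_m, \mathbf{A}_N}|$ grows unboundedly in $N$ (for a suitable choice of $\mathbf{A}_m$), but a priori $|\mathcal{K}^*(\mathbf{A}_{N'})|$ could grow very fast as a function of $|\mathbf{A}_{N'}|$, as fast as exponential in $|\mathbf{A}_{N'}|^r$ where $r$ is the maximal arity of the expansion. Two lines of attack seem plausible. First, one could try to exploit the fact that Ramsey classes consist of rigid structures to constrain the number of expansions, hopefully establishing a universal bound of the form $\log_2 |\mathcal{K}^*(\mathbf{A})| = O(|\mathbf{A}| \log |\mathbf{A}|)$, which indeed holds in all the examples treated in the proof of Theorem \ref{PestovSolution}. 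Second, one could try to sidestep the quantitative counting entirely and instead construct $T$ directly by diagonalization against a fixed enumeration of finite Boolean combinations of minimal sets, using recursive amalgamation to encode enough independence inside $T$. I expect the quantitative route to succeed for most concrete excellent pairs but to require genuinely new structural input about Ramsey expansions for the fully general case.
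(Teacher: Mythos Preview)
The statement you are addressing is Conjecture~\ref{NewConj}, and the paper does \emph{not} prove it; it is posed as an open problem at the end of Section~4, immediately after the specific cases handled in Theorems~\ref{AdmissibleSets}, \ref{Hypergraphs}, and~\ref{ForbGraphs}. So there is no paper proof to compare your proposal against.

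As an attack on the conjecture, your outline is the natural extrapolation of the paper's method, and you have correctly isolated the decisive obstruction: the inequality $|\emb{\mathbf{A}_m,\mathbf{A}_N}| > k\log_2|\mathcal{K}^*(\mathbf{A}_{N'})| + O_{k,n}(1)$ is exactly where the argument lives or dies, and there is no general reason it should hold. Two further points deserve more scrutiny than you give them. First, the assertion that a comeager set of $T\subseteq H_m$ has dense orbit in $2^{H_m}$ presupposes that this shift is topologically transitive; in the paper's examples this follows from disjoint amalgamation, but for an arbitrary Fra\"iss\'e class (possibly with algebraicity) it is not automatic and needs an argument. Second, your extension lemma with polynomial control $|\mathbf{A}_{N'}|\le p(N)$ is doing substantial work that you have not justified: in Lemmas~\ref{Extension} and~\ref{ForbExtension} the polynomial bounds come from class-specific combinatorics (Baranyai's theorem, Ramsey bounds on independence number), and for a general $\mathcal{K}$ iterated amalgamation can blow up $N'$ well beyond polynomial in $N$, which would already defeat the counting before the $|\mathcal{K}^*(\cdot)|$ term enters. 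Your concluding self-assessment is accurate: the scheme should handle many further concrete classes, but the fully general statement appears to require a genuinely new idea rather than a refinement of this counting template.
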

\vspace{5 mm}

\section{A closer look at $S_\infty$}
\label{SInfty}
\vspace{5 mm}

In this section, we take a closer look at $S(S_\infty)$, with an eye towards understanding which pairs of points $x\neq y\in S(S_\infty)$ can be separated by retractions. We view $S_\infty$ as the group of permutations of $\omega$. We can view $\omega$ as a \fr structure in the empty language. We set $\mathbf{A}_n = n$, so that $H_n$ is the set of all injections from $n$ into $\omega$, and for $m\leq n$, $\emb{\mathbf{A}_m, \mathbf{A}_n}$ is the set of all injections from $m$ into $n$. We will often abuse notation and write $s\in H_m$ as the tuple $(s_0,...,s_{m-1})$, where $s_i = s(i)$. 

We start by developing some notions for any automorphism group. Let $f\in \emb{\mathbf{A}_m, \mathbf{A}_n}$. If $\mathcal{F}\subseteq \mathcal{P}(H_m)$ is a filter, then we write $f(\mathcal{F})$ for the filter generated by $\{f(T): T\in \mathcal{F}\}$. If $\mathcal{H}\subseteq \mathcal{P}(H_n)$ is a filter, then $\tilde{f}(\mathcal{H})$ is the push-forward filter $\{T\subseteq H_m: f(T)\in \mathcal{H}\}$. This may seem like a conflict of notation since $\tilde{f}: \beta H_n\rightarrow \beta H_m$ is the extended dual map of $f$. We can justify this notation as follows. To each filter $\mathcal{H}$ on $H_n$, we associate the closed set $X_\mathcal{H}:= \bigcap_{A\in \mathcal{H}} \overline A\subseteq \beta H_n$. Conversely, given a closed set $X\subseteq \beta H_n$, we can form the \emph{filter of clopen neighborhoods} $\mathcal{F}_X := \{A\subseteq H_n: X\subseteq \overline A\}$. Then we obtain the identity 
\begin{align*}
\\[-5 mm]
X_{\tilde{f}(\mathcal{H})} = \tilde{f}(X_\mathcal{H}).\\[-5 mm]
\end{align*}
A similar identity holds given a filter $\mathcal{F}$ on $H_m$:
\begin{align*}
\\[-5 mm]
X_{f(\mathcal{F})} = \tilde{f}^{-1}(X_\mathcal{F}).\\[-5 mm]
\end{align*}

Let $Y\subseteq S(G)$ be closed. Let $\pi_m: S(G)\rightarrow \beta H_m$ be the projection map. Then $\pi_m(Y)$ is a closed subset of $\beta H_m$. Write $\mathcal{F}_m^Y$ for $\mathcal{F}_{\pi_m(Y)}$. For $n\geq m$, the filter $\mathcal{F}_n^Y$ extends the filter $i_m^n(\mathcal{F}_m^Y)$ and $\tildei{m}{n}(\mathcal{F}_n^Y) = \mathcal{F}_m^Y$. Conversely, given filters $\mathcal{F}_m$ on $H_m$ for every $m< \omega$ such that $\mathcal{F}_n$ extends $i_m^n(\mathcal{F}_m)$ and with $\tildei{m}{n}(\mathcal{F}_n) = \mathcal{F}_m$, there is a unique closed $Y\subseteq S(G)$ with $\mathcal{F}_m = \mathcal{F}_m^Y$ for each $m< \omega$. We will call such a sequence of filters \emph{compatible}.

We will need to understand the filters $\mathcal{F}_m^M$ when $M\subseteq S(G)$ is a minimal subflow. It turns out that these filters are characterized by a certain property of their members. 
\vspace{3 mm}

\begin{defin}
Given $T\subseteq H_m$, we say that $T$ is \emph{thick} if either of the following equivalent items hold (see [Z1]).
\vspace{2 mm}

\begin{enumerate}
\item
$\chi_{H_m}\in \overline{\chi_T\cdot G}$.
\vspace{2 mm}

\item
For every $n\geq m$, there is $s\in H_n$ with $s\circ \emb{A_m, A_n}\subseteq T$. 
\end{enumerate}
\end{defin}
\vspace{3 mm}

We can now state the following fact from [Z1].

\begin{theorem}
\label{MinimalFlows}
Let $G$ be an automorphism group, and let $M\subseteq S(G)$ be closed. Then $M$ is a minimal subflow iff each $\mathcal{F}_m^M$ is a maximal filter of thick sets.
\end{theorem}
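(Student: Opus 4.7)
The plan is to exploit the correspondence between closed subsets $Y \subseteq S(G)$ and compatible filter sequences $(\mathcal{F}_m^Y)$, with minimality of $M$ translating into maximality of each filter level. Proving $(\Rightarrow)$ requires showing both that $\mathcal{F}_m^M$ is a filter of thick sets and that it is maximal; the former follows from ultrahomogeneity and $G$-invariance, while the latter uses syndeticity (the finite-cover property of minimal flows). Proving $(\Leftarrow)$ requires deducing that $M$ is a subflow (i.e., $G$-invariant), and that any proper subflow would violate maximality.

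For $(\Rightarrow)$, assume $M$ is a minimal subflow. Fix $A \in \mathcal{F}_m^M$ and $n \geq m$: by ultrahomogeneity each $f \in \emb{\mathbf{A}_m, \mathbf{A}_n}$ extends to some $g_f \in G$, so $\alpha g_f \in M$ by $G$-invariance and $A \in (\alpha g_f)(m)$ translates to $f(A) \in \alpha(n)$. Since there are only finitely many such $f$, their intersection is a nonempty element of $\alpha(n)$, and any witness $s$ gives $s \circ \emb{\mathbf{A}_m, \mathbf{A}_n} \subseteq A$, proving thickness. For maximality, assume $B \subseteq H_m$ is thick with $B \notin \mathcal{F}_m^M$; the clopen set $V = \{\gamma : B \notin \gamma(m)\}$ meets $M$, so by compactness and minimality there are $g_1, \ldots, g_k \in G$ with $M \subseteq \bigcup_i V g_i^{-1}$, forcing $D := \bigcup_i (g_i|_m)(H_m \setminus B) \in \mathcal{F}_n^M$ for suitable $n$. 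Applying Proposition~\ref{Amalgamation}(2) to some $g_j|_m$ yields $h \in \emb{\mathbf{A}_n, \mathbf{A}_N}$ with $h \circ (g_j|_m) = i^N_m$; combining this with compatibility then produces the desired $F \in \mathcal{F}_m^M$ witnessing that $B \cap F$ is not thick.

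For $(\Leftarrow)$, assume each $\mathcal{F}_m^M$ is a maximal filter of thick sets. Then $M$ is nonempty by properness of the filters. For $G$-invariance it suffices to show $f(A) \in \mathcal{F}_n^M$ whenever $A \in \mathcal{F}_m^M$ and $f \in \emb{\mathbf{A}_m, \mathbf{A}_n}$: $f(A)$ is thick, so by maximality of $\mathcal{F}_n^M$ it suffices to show $f(A) \cap F'$ is thick for every $F' \in \mathcal{F}_n^M$. Proposition~\ref{Amalgamation}(2) gives $h$ with $h \circ f = i^N_m$, so $h(f(A)) = i^N_m(A) \in \mathcal{F}_N^M$ by compatibility; an analysis of how $h(\cdot)$ interacts with thickness completes the argument. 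Minimality follows because any proper subflow $M' \subsetneq M$ would give $\mathcal{F}_m^{M'} \supsetneq \mathcal{F}_m^M$ at some $m$, which remains a filter of thick sets by the first half of $(\Rightarrow)$ applied to $M'$, contradicting maximality. The main obstacle throughout is the combinatorial bookkeeping needed to connect syndetic and translation behavior across different levels of the inverse limit to the filter structure.
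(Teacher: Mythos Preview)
The paper does not actually prove Theorem~\ref{MinimalFlows}; it is introduced with the sentence ``We can now state the following fact from [Z1]'' and no proof is supplied. So there is no argument in the present paper to compare your proposal against. Any genuine comparison would have to be made against the proof in the cited reference [Z1].

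Regarding the proposal on its own merits: the $(\Rightarrow)$ direction is essentially sound, though the maximality argument is sketched rather loosely. The $(\Leftarrow)$ direction, however, has a real gap in the $G$-invariance step. You want to show that $f(A)\in\mathcal{F}_n^M$ whenever $A\in\mathcal{F}_m^M$ and $f\in\emb{\mathbf{A}_m,\mathbf{A}_n}$, and you invoke Proposition~\ref{Amalgamation}(2) to obtain $h$ with $h\circ f = i_m^N$, concluding $h(f(A)) = i_m^N(A)\in\mathcal{F}_N^M$. But to descend from $h(f(A))\in\mathcal{F}_N^M$ back to $f(A)\in\mathcal{F}_n^M$ you would need something like $\tilde h(\mathcal{F}_N^M)=\mathcal{F}_n^M$; compatibility of the filter sequence only gives this for the inclusion maps $i_n^N$, not for arbitrary embeddings $h$. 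The phrase ``an analysis of how $h(\cdot)$ interacts with thickness completes the argument'' hides exactly the nontrivial step: you must show, using only maximality and compatibility along inclusions, that $f(A)\cap F'$ is thick for every $F'\in\mathcal{F}_n^M$, and your outline does not indicate how to do this without circularly assuming the invariance you are trying to prove. This can be fixed, but it requires a more careful combinatorial argument than what is sketched.
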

\vspace{3 mm}

Another observation is the following.
\begin{prop}
	\label{Invariant}
	Say $Y\subseteq S(G)$ is a subflow, and let $T\in \mathcal{F}_m^Y$ and $f\in \emb{\mathbf{A}_m, \mathbf{A}_n}$. Then $f(T)\in \mathcal{F}_n^Y$.
\end{prop}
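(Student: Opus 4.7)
The plan is to show directly that $f(T) \in \alpha(n)$ for every $\alpha \in Y$, which by the definition of $\mathcal{F}_n^Y$ as the filter of clopen neighborhoods of $\pi_n(Y)$ is exactly what the conclusion asks for. Fix such an $\alpha$.

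The key step is to realize $f$ as the restriction of an honest automorphism. Since both $i_m$ and $f$ are embeddings of $\mathbf{A}_m$ into $\mathbf{K}$, ultrahomogeneity of $\mathbf{K}$ supplies $g \in G$ with $g \circ i_m = f$, i.e.\ $g|_m = f$. Because $Y$ is a subflow and hence $G$-invariant, $\alpha \cdot g \in Y$, and combining this with $T \in \mathcal{F}_m^Y$ yields $T \in \alpha g(m)$.

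Now apply the formula for the right $G$-action from Section 2 at level $n$, which is legitimate since $\ran(g|_m) = \ran(f) \subseteq \mathbf{A}_n$:
$$T \in \alpha g(m) \Longleftrightarrow \{x \in H_n : x \circ g|_m \in T\} \in \alpha(n).$$
Because $g|_m = f$, the right-hand side is by definition the assertion $f(T) \in \alpha(n)$. As $\alpha \in Y$ was arbitrary, $\pi_n(Y) \subseteq \overline{f(T)}$, so $f(T) \in \mathcal{F}_n^Y$.

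I do not anticipate a real obstacle: the argument is essentially a line-up of three facts --- the $G$-invariance of $Y$, the extension of a single finite embedding to an element of $G$ via ultrahomogeneity, and the defining formula for the right $G$-action on $S(G)$. The only point to watch is applying the right-action formula at exactly the level $n$ in which $f$ takes values, so that $g|_m$ may be literally replaced by $f$ in the set on the right-hand side.
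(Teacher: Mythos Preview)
Your proof is correct and follows essentially the same approach as the paper's: pick $g\in G$ with $g|_m = f$ via ultrahomogeneity, use $G$-invariance of $Y$ to get $T\in \alpha g(m)$ for every $\alpha\in Y$, and then invoke the defining formula for the right action at level $n$ to conclude $f(T)\in \alpha(n)$. The paper's version is simply more terse, recording the equivalence $T\in \alpha g(m)\Leftrightarrow f(T)\in \alpha(n)$ without spelling out the intermediate steps.
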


\begin{proof}
	Pick $g\in G$ with $g|_m = f$. Then for any $\alpha\in S(G)$, we have $T\in \alpha g(m)$ iff $f(T)\in \alpha(n)$. As $Y$ is $G$-invariant, the result follows.
\end{proof}
\vspace{5 mm}

We now turn our attention to $G = S_\infty$. Let $\{\sigma_i: i< m!\}$ list the permutations of $m$, i.e.\ the members of $\emb{\mathbf{A}_m, \mathbf{A}_m}$. Then by Proposition \ref{Invariant}, $\bigcap_i \sigma_i(T)\in \mathcal{F}_m^M$. Call $S\subseteq H_m$ \emph{saturated} if whenever $(a_0,...,a_{m-1})\in S$ and $\sigma$ is a permutation, then $(a_{\sigma(0)},...,a_{\sigma(m-1)})\in S$. We have just shown that $\mathcal{F}_m^M$ has a base of saturated sets.

Let $\phi: \bigsqcup_n H_n\rightarrow \bigsqcup_n [\omega]^n$ be the order forgetful map, i.e.\ for $(y_0,...,y_{m-1})\in H_m$, we set $\phi(y_0,...,y_{m-1}) = \{y_0,...,y_{m-1}\}\in [\omega]^m$. Any filter $\mathcal{F}$ on $H_m$ pushes forward to a filter $\phi(\mathcal{F})$ on $[\omega]^m$. We can define a \emph{thick} subset of $[\omega]^m$ in a very similar fashion to a thick subset of $H_m$; more precisely, we say $T\subseteq [\omega]^m$ is thick iff for every $n\geq m$, there is $s\in [\omega]^n$ with $[s]^m\subseteq T$. Call $S\subseteq [\omega]^m$ \emph{thin} if it is not thick. We now have the following crucial corollary of Ramsey's theorem: if $T\subseteq [\omega]^m$ is thick and $T = T_0\cupdots T_k$, then some $T_i$ is thick. In particular, if $\mathcal{H}$ is a \emph{thick filter} on $[X]^m$, i.e.\ a filter containing only thick sets, then we can extend $\mathcal{H}$ to a thick ultrafilter. It also follows that for every $m< \omega$, the collection of thin subsets of $[\omega]^m$ forms an ideal.
\vspace{3 mm}

\begin{theorem}
\label{PushUltra}
Let $M\subseteq S(S_\infty)$ be a minimal right ideal. Then for every $m< \omega$, $\phi(\mathcal{F}_m^M)$ is a thick ultrafilter. Conversely, if $p\in \beta [\omega]^m$ is a thick ultrafilter, then $\{\phi^{-1}(T): T\in p\}$ generates a maximal thick filter on $H_m$, hence there is $M\in S(S_\infty)$ with $p = \phi(\mathcal{F}_m^M)$.
\end{theorem}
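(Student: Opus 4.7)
The proof rests on the observation that saturated subsets of $H_m$ correspond bijectively, via $\phi$, to arbitrary subsets of $[\omega]^m$, and that under this correspondence thickness is preserved: for $s \in H_n$, the set $s \circ \emb{\mathbf{A}_m, \mathbf{A}_n}$ equals $\phi^{-1}([\phi(s)]^m)$, so a saturated $U \subseteq H_m$ is thick iff $\phi(U) \subseteq [\omega]^m$ is thick. Combined with the fact that $\mathcal{F}_m^M$ has a base of saturated sets (established just above the theorem using Proposition \ref{Invariant}), this identifies the push-forward $\phi(\mathcal{F}_m^M)$ with $\{S \subseteq [\omega]^m : \phi^{-1}(S) \in \mathcal{F}_m^M\}$. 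The key Ramsey input, already noted in the excerpt, is that thickness in $[\omega]^m$ is partition-regular, so a maximal thick filter on $[\omega]^m$ is automatically an ultrafilter.

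For the forward direction, $\phi(\mathcal{F}_m^M)$ is a filter of thick sets by the preservation property above and Theorem \ref{MinimalFlows}. For the ultrafilter property, suppose toward a contradiction that neither $S \subseteq [\omega]^m$ nor $S^c$ lies in $\phi(\mathcal{F}_m^M)$. Maximality of $\mathcal{F}_m^M$ among thick filters yields saturated $A_1, A_2 \in \mathcal{F}_m^M$ for which $A_1 \cap \phi^{-1}(S)$ and $A_2 \cap \phi^{-1}(S^c)$ are thin in $H_m$. Then $A_1 \cap A_2 \in \mathcal{F}_m^M$ is thick and saturated, while $\phi(A_1 \cap A_2) \subseteq (\phi(A_1) \cap S) \cup (\phi(A_2) \cap S^c)$ is a thick subset of $[\omega]^m$ contained in the union of two thin sets, contradicting partition regularity.

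For the converse, given a thick ultrafilter $p$ on $[\omega]^m$, let $\mathcal{F}$ be the filter on $H_m$ generated by $\{\phi^{-1}(T) : T \in p\}$; one checks that $\mathcal{F} = \{U \subseteq H_m : U_* \in p\}$, where $U_* := \{X \in [\omega]^m : \phi^{-1}(X) \subseteq U\}$. Each generator is thick in $H_m$, so $\mathcal{F}$ is a thick filter. For maximality, let $\mathcal{G} \supseteq \mathcal{F}$ be any thick filter and take $U \in \mathcal{G}$; we claim $U_* \in p$. Setting $U^* := \phi(U)$ and $B := U^* \setminus U_*$, the sets $U_*$, $(U^*)^c$, $B$ partition $[\omega]^m$, so if $U_* \notin p$ then either $(U^*)^c \in p$ or $B \in p$. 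The first case yields $\phi^{-1}((U^*)^c) \in \mathcal{F} \subseteq \mathcal{G}$ disjoint from $U$, contradicting that $\mathcal{G}$ is proper. In the second case, $V := U \cap \phi^{-1}(B) \in \mathcal{G}$ is thick, so for any $n \geq m$ there is $Y \in [\omega]^n$ with $\phi^{-1}([Y]^m) \subseteq V$; but then $[Y]^m \subseteq U_*$ (from $\phi^{-1}([Y]^m) \subseteq U$) and $[Y]^m \subseteq B$ (from $\phi^{-1}([Y]^m) \subseteq \phi^{-1}(B)$), contradicting $U_* \cap B = \emptyset$.

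Once $\mathcal{F}$ is known to be a maximal thick filter on $H_m$, one extends it to a compatible sequence of maximal thick filters across all levels, using Proposition \ref{Invariant}, the AP, and Zorn's lemma, and then appeals to Theorem \ref{MinimalFlows} to produce a minimal subflow $M$ with $\mathcal{F}_m^M = \mathcal{F}$; unwinding definitions gives $p = \phi(\mathcal{F}_m^M)$. The main obstacle is the boundary case $B \in p$ in the converse: the contradiction requires exploiting that a thickness witness $Y$ in $H_m$, when pulled through the correspondence, is simultaneously forced to lie in $U_*$ and in $B$, which are disjoint by construction. The forward direction's Ramsey step is the other delicate point, but it is cleaner in that it reduces directly to partition regularity in $[\omega]^m$.
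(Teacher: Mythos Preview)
Your argument is correct and follows essentially the same approach as the paper: both directions rest on the correspondence between saturated subsets of $H_m$ and arbitrary subsets of $[\omega]^m$, the preservation of thickness under this correspondence, the fact that $\mathcal{F}_m^M$ has a saturated base, and the Ramsey-theoretic partition regularity of thickness in $[\omega]^m$. The only differences are organizational: for the forward direction the paper extends $\phi(\mathcal{F}_m^M)$ to a thick ultrafilter and derives a contradiction to maximality of $\mathcal{F}_m^M$, whereas you invoke partition regularity directly; for the converse the paper reduces to saturated $S$ (so that $\phi^{-1}(\phi(S))=S$) and observes $\phi(S)\in p$, while your three-way partition $[\omega]^m = U_* \sqcup B \sqcup (U^*)^c$ reaches the same conclusion without that reduction.
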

\vspace{1 mm}

\begin{proof}
Clearly $\phi(\mathcal{F}_m^M)$ is a thick filter. Towards a contradiction, suppose it is not an ultrafilter, and extend it to a thick ultrafilter $p\in \beta [\omega]^m$. Let $T\in p\setminus \phi(\mathcal{F}_m^M)$. Then $\phi^{-1}(T)\not\in \mathcal{F}_m^M$. However, $\phi^{-1}(T)\cap S$ is thick for every saturated $S\in \mathcal{F}_m^M$. As saturated sets form a base for $\mathcal{F}_m^M$, this contradicts the maximality of $\mathcal{F}_m^M$.

Now let $p\in \beta [\omega]^m$ be a thick ultrafilter. Then $\mathcal{F}:= \{\phi^{-1}(T): T\in p\}$ generates a thick filter. Suppose $S\subseteq H_m$ and $\{S\}\cup \mathcal{F}$ generated a thick filter strictly larger than $\mathcal{F}$. We may assume $S$ is saturated. Then $\phi(S)\in p$, so $\phi^{-1}(\phi(S)) = S\in \mathcal{F}$, a contradiction.
\end{proof}
\vspace{5 mm}

Notice that if $p\in \beta [\omega]^n$ is thick and $m\leq n$, then there is a unique thick ultrafilter $q\in \beta [\omega]^m$ with the property that $\{a\in [\omega]^n: [a]^m\subseteq S\}\in p$ for every $S\in q$. Certainly such a $q$ must be unique. To see that this $q$ exists, suppose $[\omega]^m = S\sqcup T$. Then the set $\{a\in [\omega]^n: [a]^m\cap S\neq \emptyset \text{ and } [a]^m\cap T\neq \emptyset\}$ is not thick. We will write $\pi_m^n(p)$ for this $q$. If $M\subseteq S(G)$ is a minimal right ideal and $p = \phi(\mathcal{F}_n^M)$, then we have $\pi_m^n(p) = \phi(\mathcal{F}_m^M)$.

Let $\mathrm{LO}(\omega)$ be the space of linear orders on $\omega$. Viewed as a subset of the right shift $2^{H_2}$, $\mathrm{LO}(\omega)$ becomes an $S_\infty$-flow. It is known (see [GW] or [KPT]) that $\mathrm{LO}(\omega)\cong M(S_\infty)$. Indeed, we saw in section 4 that if $\mathcal{K}$ is the class of finite sets and $\mathcal{K}^*$ is the class of finite linear orders, then $(\mathcal{K}^*, \mathcal{K})$ is an excellent pair, and $X_\mathcal{K^*}\cong \mathrm{LO}(\omega)$. If $M\subseteq S(S_\infty)$ is a minimal right ideal and $<\in \mathrm{LO}(\omega)$, then the map $\lambda: M\rightarrow \mathrm{LO}(\omega)$ given by $\lambda(\alpha) =\, <\!\cdot \alpha:= \lim_{g_i\rightarrow \alpha} <\cdot g_i$ is an $S_\infty$-flow isomorphism. We will often write $<^\alpha$ for $<\cdot \alpha$, and we will write $>$ for the reverse linear order of $<$.

If $<_0, <_1\in \mathrm{LO}(\omega)$ and $m\geq 2$, define the set 
\begin{align*}
\\[-5 mm]
A_m(<_0, <_1) = \{\{a_0,...,a_{m-1}\}\in [\omega]^m: \forall i,j< m(a_i<_0 a_j \Leftrightarrow a_i<_1 a_j)\}\\[-5 mm]
\end{align*}
 and define $B_m(<_0, <_1) = A_m(<_0, >_1)$. If $s\in H_m$, we say that $<_0$ and $<_1$ \emph{agree on $s$} if $\phi(s)\in A_m(<_0, <_1)$, and we say that they \emph{anti-agree on $s$} if $\phi(s)\in B_m(<_0, <_1)$. When $m = 2$ we often omit the subscript. If $M$ is a minimal right ideal, then $\phi(\mathcal{F}_2^M)$ contains exactly one of $A(<_0, <_1)$ or $B(<_0, <_1)$. Let $A^M\subseteq \mathrm{LO}(\omega)\times \mathrm{LO}(\omega)$ be defined $A^M = \{(<_0, <_1): A(<_0, <_1)\in \phi(\mathcal{F}_2^M)\}$. Then $A^M$ is certainly reflexive and symmetric. To see that $A^M$ is an equivalence relation, note that $A(<_0, <_1)\cap A(<_1, <_2)\subseteq A(<_0, <_2)$. Furthermore, $A^M$ has exactly two equivalence classes; this is because $B(<_0, <_1)\cap B(<_1, <_2)\subseteq A(<_0, <_2)$.
\vspace{3 mm}

\lemma
\label{LinearOrders}
Let $M\subseteq S(S_\infty)$ be a minimal right ideal, and let $(<_0, <_1)\in A^M$. Then for any $m< \omega$, we have $A_m(<_0, <_1)\in \phi(\mathcal{F}_m^M)$.
\vspace{1 mm}

\proof
Let $\{f_i: i< k\}$ enumerate $\emb{A_2, A_m}$. Then $\bigcap_i f_i(\phi^{-1}(A(<_0, <_1)))\in \mathcal{F}_m^M$, and this is exactly the desired set.
\qedhere
\vspace{4 mm}

\lemma
\label{Idempotents}
Let $M\subseteq S(S_\infty)$ be a minimal right ideal, and let $\alpha\in M$. Then the following are equivalent:
\vspace{2 mm}

\begin{enumerate}
\item
$\alpha$ is an idempotent,
\vspace{2 mm}

\item
 For any $<\in \mathrm{LO}(\omega)$, we have $(<, <^\alpha)\in A^M$,
\vspace{2 mm}

\item
There is $<\in \mathrm{LO}(\omega)$ with $(<, <^\alpha)\in A^M$.
\end{enumerate}
\vspace{1 mm}

\proof
Suppose $\alpha\in M$ is idempotent, and let $<\in \mathrm{LO}(\omega)$. Then considering $i_2 = (x_0,x_1)\in \emb{A_2, A_2}$, we have $x_0 <^\alpha x_1$ iff $\{f\in H_2: f_0 < f_1\}\in \alpha(2)$. But since $\alpha$ is idempotent, this is equivalent to $\{f\in H_2: f_0 <^\alpha f_1\}\in \alpha(2)$. But this implies that $\phi^{-1}(A(<, <^\alpha))\in \alpha(2)$, implying that $A(<, <^\alpha)\in \phi(\mathcal{F}_2^M)$.

Conversely, suppose $\alpha\in M$ and $<\in \mathrm{LO}(\omega)$ with $(<, <^\alpha)\in A^M$. If $f\in \emb{A_2, A_n}$, then we have $f_0 <^\alpha f_1$ iff $\{s\in H_n: s(f_0)< s(f_1)\}\in \alpha(n)$. By Lemma \ref{LinearOrders}, we see that this is iff $\{s\in H_n: s(f_0) <^\alpha s(f_1)\}\in \alpha(n)$. It follows that $<\cdot \alpha\cdot \alpha = <\cdot\alpha$, so $\alpha$ is idempotent.
\qedhere
\vspace{4 mm}

\theorem
\label{ProductIdemp}
Let $M, N\subseteq S(S_\infty)$ be minimal right ideals. The following are equivalent.
\vspace{2 mm}

\begin{enumerate}
\item
$A^M = A^N$,
\vspace{2 mm}

\item
If $u\in M$ and $v\in N$ are idempotents, then $uv\in M$ is also idempotent.
\end{enumerate}
\vspace{1 mm}

\proof
Suppose $A^M \neq A^N$, with $(<_0, <_1)\in A^N\setminus A^M$. Find $u\in M$ with $<_0\cdot u =\, <_0$, and find $v\in N$ with $<_0\cdot v =\, <_1$. By Lemma \ref{Idempotents}, $u$ and $v$ are idempotents and $uv$ is not an idempotent.

Conversely, suppose $u\in M$ and $v\in N$ are idempotents with $uv$ not idempotent. Find $<_0\,\in \mathrm{LO}(\omega)$ with $<_0\cdot u =\, <_0$, and let $<_1 \,=\, <_0\cdot v$. Since $v$ is idempotent, we have by Lemma \ref{Idempotents} that $(<_0, <_1)\in A^N$; but since $uv$ is not idempotent, we have $(<_0, <_1)\not\in A^M$.
\qedhere
\vspace{6 mm}

It is easy to construct minimal right ideals $M, N\subseteq S(S_\infty)$ with $A^M\neq A^N$. Let $<_0, <_1\in \mathrm{LO}(\omega)$ be linear orders so that for every $m< \omega$, there are $s^m = (s_0^m,...,s_{m-1}^m)\in H_m$ and $t^m = (t_0^m,...,t_{m-1}^m)\in H_m$ so that $<_0$ and $<_1$ agree on $s^m$ and anti-agree on $t^m$. Let $M\subseteq S(S_\infty)$ be a minimal subflow with $\phi^{-1}(A(<_0, <_1))\in \mathcal{F}_2^M$, and let $N\subseteq S(S_\infty)$ be a minimal subflow with $\phi^{-1}(B(<_0, <_1))\in \mathcal{F}_2^N$. Then $(<_0, <_1)\in A^M\setminus A^N$. 
\vspace{3 mm}

We now turn our attention to constructing $M\neq N\subseteq S(S_\infty)$ minimal right ideals with $A^M = A^N$; this will prove Theorem \ref{TwoIdeals} as a corollary of Theorem \ref{ProductIdemp}. To this end, we will construct two thick ultrafilters $p\neq q\in \beta [\omega]^3$ with $\pi_2^3(p) = \pi_2^3(q)$, so that whenever $M$ and $N$ are minimal subflows of $S(S_\infty)$ with $\phi(\mathcal{F}_3^M) = p$ and $\phi(\mathcal{F}_3^N) = q$, then $\phi(\mathcal{F}_2^M) = \phi(\mathcal{F}_2^N)$. In particular, this implies that $A^M = A^N$.

Recall that a \emph{selective ultrafilter} is an ultrafilter $p$ on $\omega$ with the property that for any finite coloring $c: [\omega]^2\rightarrow r$, there is a $p$-large set $A\subseteq \omega$ which is monochromatic for $c$. Another way of saying this is as follows. Given a set $A\subseteq \omega$, set $\lambda A = [A]^2$, and if $\mathcal{F}$ is a filter on $\omega$, let $\lambda \mathcal{F}$ be the filter generated by $\{\lambda A: A\in \mathcal{F}\}$. Then the ultrafilter $p$ is selective iff $\lambda p$ is an ultrafilter. The existence of selective ultrafilters is independent of ZFC.

We will be considering the following generalizations of selective ultrafilters. Let $m< \omega$. If $T\subseteq [\omega]^m$, we set $\lambda T = \{s\in [\omega]^{m+1}: [s]^m\subseteq T\}$. If $n > m$, we set $\lambda^{(n-m)}(T) = \{s\in [\omega]^n: [s]^m\subseteq T\}$. Notice that the $\lambda^{(n-m)}$ operation is the same as applying $\lambda$ $(n-m)$-many times, justifying this notation. If $\mathcal{F}$ is a filter on $[\omega]^m$, we let $\lambda^{(n-m)} \mathcal{F}$ be the filter generated by $\{\lambda^{(n-m)} T: T\in \mathcal{F}\}$. It can happen that for some $T\in \mathcal{F}$ we have $\lambda^{(n-m)} T = \emptyset$. We will usually be working under assumptions that prevent this from happening. For instance, if $T\subseteq [\omega]^m$ is thick, then $\lambda^{(n-m)} T\neq \emptyset$ for every $n> m$. Even better, if $\mathcal{F}$ is a thick filter on $[\omega]^m$, then $\lambda^{(n-m)} \mathcal{F}$ is a thick filter on $[\omega]^n$.
\vspace{3 mm}

\begin{defin}
	\label{MNSelective}
Let $p\in \beta [\omega]^m$ be a thick ultrafilter. We say that $p$ is \emph{$(m,n)$-selective} if $\lambda^{(n-m)} p$ is an ultrafilter. We say that $p$ is \emph{weakly $(m,n)$-selective} if there is a unique thick ultrafilter extending the filter $\lambda^{(n-m)} p$.
\end{defin}
\vspace{3 mm}

If $p\in \beta [\omega]^m$ is a thick ultrafilter and $q\in \beta [\omega]^n$ is a thick ultrafilter extending $\lambda^{(n-m)} p$, then we have $\pi^n_m(q) = p$. Therefore to prove Theorem \ref{TwoIdeals}, it is enough to construct a thick ultrafilter $p\in \beta [\omega]^2$ which is not weakly $(2,3)$-selective. Indeed, if $p\in\beta[\omega]^2$ is not weakly $(2,3)$-selective, then there are thick ultrafilters $q_0\neq q_1$ both extending the filter $\lambda p$, so $\pi^3_2(q_0) = \pi^3_2(q_1)$.

Our construction proceeds in two parts. First we define a certain type of pathological subset of $[\omega]^3$ and show that its existence allows us to construct $p\in \beta [\omega]^2$ which is not weakly $(2,3)$-selective. Then we show the existence of such a pathological set. 
\vspace{3 mm}

We begin by developing some abstract notions. Let $Y$ be a set, and let $\mathcal{I}$ be a proper ideal on $Y$. Write $S\subseteq_{\mathcal{I}} T$ if $S\setminus T\in \mathcal{I}$.  Let $\psi: \mathcal{P}(Y)\rightarrow \mathcal{P}(Y)$ be a map satisfying $\psi^2 = \psi$, $S\subseteq \psi(S)$, $S\subseteq T\Rightarrow \psi(S)\subseteq \psi(T)$, and $\psi(\emptyset) = \emptyset$. Call a set $S\subseteq Y$ \emph{$\psi$-closed} or just \emph{closed} if $\psi(S) = S$, and call $S$ \emph{near-closed} if there is a closed set $T$ with $S\Delta T\in \mathcal{I}$. Call a set $S\subseteq Y$ \emph{$(<\!\aleph_0)$-near-closed} if there are $k< \omega$ and closed $T_0,...,T_{k-1}$ with $S\Delta (\bigcup_{i<k} T_i)\in \mathcal{I}$. Notice that a finite union of near-closed sets is $(<\!\aleph_0)$-near-closed.

Now suppose $S\subseteq Y$ is a set which is not $(<\!\aleph_0)$-near-closed. If $p,q\in \beta Y$, we say that $p$ \emph{$\psi$-intertwines $q$ over $S$ modulo $\mathcal{I}$} if the following three items all hold:
\vspace{3 mm}

\begin{enumerate}
\item
$\{S\setminus T: T \text{ is near-closed and } T\subseteq_{\mathcal{I}} S\}\subseteq p$,
\vspace{2 mm}

\item
$\{\psi(T)\setminus S: T\in p, T\subseteq S\}\subseteq q$,
\vspace{2 mm}

\item
$p$ and $q$ extend the filterdual of $\mathcal{I}$.
\end{enumerate}
\vspace{3 mm}

If $\psi$, $S$, and $\mathcal{I}$ are understood, we will just say that $p$ \emph{intertwines} $q$. Notice in (1) that if $T$ is near-closed with $T\subseteq_{\mathcal{I}} S$, then $S\cap T$ is also near-closed, so it is enough to consider near-closed $T$ with $T\subseteq S$.
\vspace{3 mm}

\begin{lemma}
\label{IntertwineLemmas}
Fix $S\subseteq Y$ which is not $(<\!\aleph_0)$-near-closed.
\vspace{2 mm}

\begin{enumerate}
\item
If $B\subseteq Y$ with $B\in \mathcal{I}$, then $B$ is near-closed. Hence $S\not\in \mathcal{I}$.
\vspace{2 mm}

\item
There are $p, q\in \beta Y$ so that $p$ intertwines $q$.
\end{enumerate}
\end{lemma}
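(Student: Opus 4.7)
The plan is to handle (1) by direct unpacking of definitions and to handle (2) by two successive applications of the ultrafilter extension lemma (i.e.\ Zorn applied to FIP families).

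For (1): since $\psi(\emptyset) = \emptyset$, the empty set is closed. Given $B \in \mathcal{I}$, the symmetric difference $B \Delta \emptyset = B$ lies in $\mathcal{I}$, so $B$ is near-closed with witness $\emptyset$. The second clause follows immediately: if $S \in \mathcal{I}$, then $S$ is near-closed, hence $(<\!\aleph_0)$-near-closed, contradicting the standing hypothesis on $S$.

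For (2), I would first build $p$. Consider the family
\[
\mathcal{A}_p = \{S \setminus T : T \text{ near-closed},\ T \subseteq_{\mathcal{I}} S\} \cup \{Y \setminus B : B \in \mathcal{I}\}.
\]
To verify FIP, take finitely many near-closed $T_1,\dots,T_k \subseteq_{\mathcal{I}} S$ and $B_1,\dots,B_\ell \in \mathcal{I}$. Let $U = T_1 \cup \cdots \cup T_k \cup B_1 \cup \cdots \cup B_\ell$. Since a finite union of near-closed sets is $(<\!\aleph_0)$-near-closed, and adjoining $\mathcal{I}$-sets preserves this, $U$ is $(<\!\aleph_0)$-near-closed; and $U \setminus S \subseteq \bigcup_i(T_i \setminus S) \cup \bigcup_j B_j \in \mathcal{I}$. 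Were $S \setminus U \in \mathcal{I}$, we would have $S \Delta U \in \mathcal{I}$, which combined with $U$ being $(<\!\aleph_0)$-near-closed forces $S$ itself to be $(<\!\aleph_0)$-near-closed, contradiction. In particular the intersection $\bigcap_i(S \setminus T_i) \cap \bigcap_j(Y \setminus B_j) = S \setminus U$ is nonempty, so $\mathcal{A}_p$ extends to an ultrafilter $p \in \beta Y$. Note (taking $T = \emptyset$) that $S \in p$.

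For $q$, now that $p$ is fixed, consider
\[
\mathcal{A}_q = \{\psi(T) \setminus S : T \in p,\ T \subseteq S\} \cup \{Y \setminus B : B \in \mathcal{I}\}.
\]
The key point is: for any $T \in p$ with $T \subseteq S$, one has $\psi(T) \setminus S \notin \mathcal{I}$. Indeed, if $\psi(T) \setminus S \in \mathcal{I}$, then $\psi(T)$ is closed (hence near-closed) with $\psi(T) \subseteq_{\mathcal{I}} S$, so clause (1) of the definition of intertwining places $S \setminus \psi(T) \in p$; but $T \subseteq \psi(T)$ gives $T \cap (S \setminus \psi(T)) = \emptyset$, contradicting $T \in p$. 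To get FIP, given $T_1,\dots,T_k \in p$ with each $T_i \subseteq S$ and $B \in \mathcal{I}$, set $T = \bigcap_i T_i \in p$ (so $T \subseteq S$); by monotonicity of $\psi$, $\psi(T) \subseteq \bigcap_i \psi(T_i)$, and the same argument applied with $S \cup B$ in place of $S$ shows $\psi(T) \setminus (S \cup B) \neq \emptyset$: if $\psi(T) \subseteq S \cup B$ then $\psi(T) \setminus S \in \mathcal{I}$, giving the contradiction just described. Extend $\mathcal{A}_q$ to an ultrafilter $q$.

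The only real substance is the FIP verifications, and the main obstacle — propagating the $(<\!\aleph_0)$-near-closed hypothesis through finite unions and through the ideal — is handled cleanly by absorbing any $B \in \mathcal{I}$ into the finite union of near-closed sets, which preserves the $(<\!\aleph_0)$-near-closed property since $\mathcal{I}$-sets are near-closed by part (1).
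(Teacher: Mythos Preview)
Your proof is correct and follows essentially the same strategy as the paper's: build $p$ from the family $\{S\setminus T: T\text{ near-closed},\ T\subseteq_{\mathcal{I}} S\}$ together with the filterdual of $\mathcal{I}$, then build $q$ from $\{\psi(T)\setminus S: T\in p,\ T\subseteq S\}$ using the key observation that $\psi(T)\setminus S\in\mathcal{I}$ would give $\psi(T)\subseteq_{\mathcal{I}} S$, forcing $S\setminus\psi(T)\in p$ and contradicting $T\subseteq\psi(T)$ with $T\in p$. Your FIP verifications are spelled out in more detail than the paper's, but the underlying argument is identical.
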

\vspace{1 mm}

\proof
The first part follows since the empty set is closed. 

Since $S$ is not $(<\!\aleph_0)$-near-closed, we have that $\{S\setminus T: T \text{ near-closed and } T\subseteq_{\mathcal{I}} S\}$ generates a filter $\mathcal{F}$ extending the filterdual of $\mathcal{I}$. Let $p\in \beta Y$ be any ultrafilter extending $\mathcal{F}$. 

Now let $T\in p$. Then $\psi(T)\setminus S\not\in \mathcal{I}$; otherwise we would have $\psi(T)\subseteq_{\mathcal{I}} S$, so $S\setminus \psi(T)\in p$, contradicting that $T\in p$. Also note by monotonicity of $\psi$ that $(\psi(T_0)\cap \psi(T_1))\setminus S\supseteq \psi(T_0\cap T_1)\setminus S$, so the collection $\{\psi(T)\setminus S: T\in p\}$ generates a filter $\mathcal{H}$ avoiding $\mathcal{I}$; letting $q$ be any ultrafilter extending both $\mathcal{H}$ and the filterdual of $\mathcal{I}$, we see that $p$ intertwines $q$.
\qedhere
\vspace{5 mm}

We now apply these ideas. Let $Y = [\omega]^3$, and let $\mathcal{I}$ be the thin ideal. Given $T\subseteq [\omega]^3$, view $T$ as a $3$-uniform hypergraph, and form the shadow graph $\partial T := \{\{a,b\}\in [\omega]^2: \exists c (\{a,b,c\}\in T)\}$. Define $\psi(T) = \lambda\partial T$. In words, $\psi(T)$ is the largest hypergraph with $\partial\psi(T) = \partial T$. More generally, we can set $Y = [\omega]^n$ and let $\mathcal{I}$ be the ideal of subsets of $[\omega]^n$ which are not thick. If $m< n$ and $T\subseteq [\omega]^n$, we set $\partial^{(n-m)}T = \{s\in [\omega]^m: \exists t\in [\omega]^{n-m} (s\cup t\in T)\}$. Then we can set $\psi(T) = \lambda^{(n-m)}\partial^{(n-m)}T$.
\vspace{3 mm}

\theorem
\label{IntertwineWorks}
Let $Y = [\omega]^n$, let $\mathcal{I}$ be the thin ideal, and let $\psi = \lambda^{(n-m)}\partial^{(n-m)}$ for some $m< n$. Suppose $S\subseteq [\omega]^n$ is not $(<\!\aleph_0)$-near-closed, and say $p,q\in \beta [\omega]^n$ where $p$ intertwines $q$. Then $\pi_m^n(p) = \pi_m^n(q)$.
\vspace{1 mm}

\proof
Suppose towards a contradiction that $p':= \pi_m^n(p) \neq q':= \pi_m^n(q)$ as witnessed by $A\subseteq [\omega]^m$ with $A\in p'$, $[\omega]^m\setminus A\in q'$. Then setting $B:= \{s\in [\omega]^n: [s]^m\subseteq A\}$ and $C:= \{s\in [\omega]^n: [s]^m\subseteq [\omega]^m\setminus A\}$, we have $B\in p$, $C\in q$, and $B\cap C = \emptyset$. Note that both $B$ and $C$ are $\psi$-closed. Since $p$ and $q$ are intertwined, we have $\psi(B\cap S)\setminus S\in q$, so in particular $B\setminus S\in q$. But since $C\in q$, this is a contradiction.
\qedhere
\vspace{6 mm} 

The next theorem along with Theorems \ref{IntertwineWorks} and \ref{ProductIdemp} will prove Theorem \ref{TwoIdeals}.
\vspace{2 mm}

\begin{theorem}
\label{ExistsNotClosed}
With $\mathcal{I}$ and $\psi$ as in Theorem \ref{IntertwineWorks}, there is $S\subseteq [\omega]^n$ which is not $(<\!\aleph_0)$-near-closed. 
\end{theorem}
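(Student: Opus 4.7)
The plan is to construct $S$ by a transfinite recursion of length $\mathfrak{c}=2^{\aleph_0}$, guided by a Ramsey-theoretic reformulation of the failure of $(<\!\aleph_0)$-near-closedness.

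First, I reformulate the target property. I claim that $S$ fails to be $(<\!\aleph_0)$-near-closed if and only if for every finite tuple $\mathbf{A}=(A_1,\dots,A_k)$ of subsets of $[\omega]^m$ and every $N\in\omega$, there is a finite $X\subseteq\omega$ with $|X|\geq N$ on which each $A_i$ is monochromatic on $[X]^m$ (say with values $\epsilon_i\in\{0,1\}$) and $S$ is monochromatic on $[X]^n$ with value $\sigma=1-\max_i\epsilon_i$. Indeed, on such an $X$ the set $\bigcup_i\lambda^{(n-m)}(A_i)\cap[X]^n$ is $[X]^n$ if some $\epsilon_i=1$ and is empty otherwise, so $[X]^n\subseteq S\Delta\bigcup_i\lambda^{(n-m)}(A_i)$, exhibiting thickness of the symmetric difference; conversely, if $S\Delta\bigcup_i\lambda^{(n-m)}(A_i)$ is thick, a further application of finite Ramsey inside any witnessing $X$ produces a monochromatic refinement of the required size.

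Second, I build $S$ by transfinite recursion. Enumerate all pairs $(\mathbf{A}^\alpha,N_\alpha)_{\alpha<\mathfrak{c}}$ of finite tuples of subsets of $[\omega]^m$ with natural numbers, each pair occurring cofinally often. At stage $\alpha$, I use iterated Ramsey's theorem to produce a finite $X_\alpha\subseteq\omega$ with $|X_\alpha|\geq N_\alpha$ monochromatic for each $A^\alpha_i$ on $[X_\alpha]^m$, and I commit $S|_{[X_\alpha]^n}$ to the forced value $\sigma_\alpha=1-\max_i\epsilon_i^\alpha$. After the recursion, I extend $S$ arbitrarily to any $n$-tuples not yet decided. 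Any $(\mathbf{A},N)$ is then handled at some stage $\alpha$, producing an $X_\alpha$ witnessing $[X_\alpha]^n\subseteq S\Delta\bigcup_i\lambda^{(n-m)}(A_i)$, so $S$ will not be $(<\!\aleph_0)$-near-closed.

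The main obstacle is maintaining consistency of the commitments across the $\mathfrak{c}$-many stages: at stage $\alpha$, for every prior $\beta<\alpha$ with $\sigma_\beta\neq\sigma_\alpha$, I need $|X_\alpha\cap X_\beta|<n$ so that $[X_\alpha]^n\cap[X_\beta]^n=\emptyset$. The strategy is to first find an infinite subset $X^*\subseteq\omega$ monochromatic for $\mathbf{A}^\alpha$ with the additional property that $|X^*\cap X_\beta|<n$ for every such conflicting $\beta$, and then pick $X_\alpha\subseteq X^*$ of size $\geq N_\alpha$. There is flexibility in this step because when multiple Ramsey-monochromatic types are available for $\mathbf{A}^\alpha$, each type forces a specific $\sigma_\alpha$, so I may choose the type whose $\sigma_\alpha$ matches the commitments carried by prior $X_\beta$'s one cannot avoid overlapping. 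Securing such an $X^*$ at every stage requires a careful bookkeeping argument (for instance, arranging the enumeration so that conflicting prior $X_\beta$'s have union with infinite complement in $\omega$, or invoking a $\Delta$-system-style refinement of the family $\{X_\beta:\beta<\alpha\}$); this combinatorial management of the $\mathfrak{c}$-many constraints against the countable universe $\omega$ is where the heart of the proof lies.
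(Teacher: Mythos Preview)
Your reformulation in the first paragraph is correct, and the diagonalization idea is natural, but the construction has a genuine gap precisely at the point you flag as ``the heart of the proof.'' You are running a recursion of length $\mathfrak{c}$ (there are continuum many finite tuples $\mathbf{A}$ of subsets of $[\omega]^m$) while making commitments inside the countable set $[\omega]^n$. After only countably many stages the finite sets $X_\beta$ can cover all of $\omega$, and from then on every candidate $X_\alpha$ overlaps prior witnesses in $\geq n$ points. Your proposed escape---choosing the monochromatic type so that $\sigma_\alpha$ matches the prior commitment---is not always available: if $\mathbf{A}^\alpha=([\omega]^m)$ then every monochromatic $X$ has $\epsilon=1$, forcing $\sigma_\alpha=0$, while $\mathbf{A}^\alpha=(\emptyset)$ forces $\sigma_\alpha=1$; both kinds of constraint occur cofinally and cannot be reconciled once $S$ has been fully committed. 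Neither bookkeeping the enumeration nor a $\Delta$-system refinement helps: the former still uses up $\omega$ after countably many steps, and the latter only passes to a subfamily of constraints, whereas you must satisfy all of them.

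The paper avoids this cardinality obstruction entirely by taking $S$ to be the random $n$-uniform hypergraph and giving a counting argument. If $S$ were $(<\!\aleph_0)$-near-closed, then $S=T\Delta G$ with $T$ a union of $k$ closed sets and $G$ containing no clique of some fixed size $\ell$. On any $N$-vertex set, each $T_i$ is determined by its $m$-shadow, giving at most $2^{k\binom{N}{m}}$ choices for $T$, while the number of $\ell$-clique-free $n$-uniform hypergraphs is at most $2^{(1-d)\binom{N}{n}}$ for some $d>0$; multiplying gives far fewer than the $2^{\binom{N}{n}}$ induced subgraphs the random hypergraph actually realizes. This single explicit $S$ defeats all continuum many $\mathbf{A}$'s simultaneously, which is exactly what a stage-by-stage diagonalization against $\omega$ cannot do.
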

\vspace{-1 mm}

\proof
The following elegant proof is due to Anton Bernshteyn. 

We take $S$ to be the random $n$-uniform hypergraph. Suppose towards a contradiction that $S$ was $k$-near-closed for some $k< \omega$. We write $S = S_0\cupdots S_{k-1}$ with each $S_i$ near-closed. Let $T_i\subseteq [\omega]^n$ be a $\psi$-closed set with $S_i\Delta T_i\in \mathcal{I}$, and write $T = \bigcup_{i < k} T_i$. So $S\Delta T\in \mathcal{I}$. This means that there is some $\ell< \omega$ so that the hypergraph $S\Delta T$ contains no clique of size $\ell$.

We now compute an upper bound on the number of induced subgraphs of $S$ that can appear on $N$ vertices $V:= \{v_0,...,v_{N-1}\}\subseteq \omega$. Since $S$ is the random $n$-uniform hypergraph, there must be  $2^{\binom{N}{n}}$ many possibilities. But by assumption, $S = T\Delta G$, where $G$ is some hypergraph with no cliques of size $\ell$. Since an induced subgraph of a $\psi$-closed graph is $\psi$-closed, each $T_i|_V$ is determined by $\partial^{(n-m)}(T_i|_V)$, so in particular, there are at most $2^{\binom{N}{m}}$ many possibilities for each $T_i|_V$, so at most $2^{k\binom{N}{m}}$ possibilities for $T|_V$. As for $G$, we need an estimate on the number of $\ell$-free $n$-uniform hypergraphs on $N$ vertices. It is a fact that for some constant $c> 0$ depending only on $\ell$ and $n$, we can find $c\binom{N}{n}$ subsets of $N$ of size $\ell$ which pairwise have intersection smaller than $n$. By a probabilistic argument, it follows that the proportion of $n$-uniform hypergraphs on $N$ vertices which are $\ell$-free is at most $$(1-2^{-\binom{\ell}{n}})^{c\binom{N}{n}} \leq 2^{-c\binom{\ell}{n}\binom{N}{n}}:= 2^{-d\binom{N}{n}}.$$
Multiplying together the number of choices for $T|_V$ with the number of choices for $G|_V$, we have that the number of possibilities for $S|_V$ is at most
$$(2^{(1-d)\binom{N}{n}})(2^{k\binom{N}{m}}) \ll 2^{\binom{N}{n}}.$$
This shows that $S$ is not $(<\aleph_0)$-near-closed.
\qedhere
\vspace{5 mm}

Let us now briefly discuss why Theorem \ref{TwoIdeals} implies Theorem \ref{PointsSmallestIdeal}. Recall (see [HS]) that in any compact left-topological semigroup $S$, the smallest ideal $K(S)$ is both the union of the minimal right ideals and the union of the minimal left ideals. The intersection of any minimal right ideal and any minimal left ideal is a group, so in particular contains exactly one idempotent. More concretely, if $M\subseteq S(G)$ is a minimal right ideal and $u\in M$ is idempotent, then $S(G)u$ is a minimal left ideal and $Mu = M\cap S(G)u$. All the groups formed in this way are algebraically isomorphic. When $S = S(G)$ for some topological group $G$, we can interpret this group as $\mathrm{aut}(M(G))$, the group of $G$-flow isomorphisms of $M(G)$. 

Fix $M\subseteq S(G)$ be a minimal subflow, and let $\phi: S(G)\rightarrow M$ be a $G$-map. Letting $p = \phi(1_G)$, then we must have $\phi = \lambda_p$. It follows that $\phi$ is a retraction iff $\phi = \lambda_u$ for some idempotent $u\in M$. Furthermore, if $p\in M$, then there is a unique idempotent $u\in M$ with $p = pu\in Mu$. It follows that for some $q\in M$ we have $\lambda_q\circ \lambda_p = \lambda_u$.

Now suppose $N\subseteq S(G)$ is another minimal right ideal, and that $x\neq y\in S(G)$ can be separated by a retraction $\psi$ onto $N$. Pick any $p\in M$ and form the $G$-map $\lambda_p\circ \psi$. Notice that $\lambda_p|_N$ is an isomorphism. For some $q\in M$ we have $\lambda_p\circ \psi = \lambda_q$. Then for some $r\in M$, we have $\lambda_r\circ \lambda_q = \lambda_u$ a retraction. It follows that $x$ and $y$ are also separated by $\lambda_u$. Hence the relation of being separated by a retraction does not depend on the choice of minimal subflow $M\subseteq S(G)$.

Now let $G = S_\infty$, and let $M\neq N$ be the minimal right ideals found in Theorem \ref{TwoIdeals}. Let $L$ be any minimal left ideal, and let $u\in M\cap L$ and $v\in N\cap L$ be idempotents. We will show that $u$ and $v$ cannot be separated by retractions, so let $\phi: S(G)\rightarrow M$ be a retraction. Then $\phi = \lambda_w$ for some idempotent $w\in M$. Then $\phi(u) = wu = u$ since idempotents in $M$ are left identities for $M$. But now consider $\phi(v) = wv$. By our assumption on $M$ and $N$, $wv$ is an idempotent. However, we must also have $wv\in M\cap L$ since $M$ and $L$ are respectively right and left ideals. It follows that $wv = u$, so $\phi(u) = \phi(v)$ as desired.
\vspace{5 mm}

\section{Proximal and Distal}

The technique of finding $M, N\subseteq S(G)$ minimal subflows with $J(M)\cup J(N)$ a semigroup allows for a quick solution to Ellis's problem for some Polish groups $G$. 

Recall from the introduction that a $G$-flow $X$ is called \emph{proximal} if every pair of points is proximal. Now suppose that $M(G)$ is proximal, i.e.\ that $M(G)$ is proximal. Then every element of $M$ is an idempotent; to see why, notice that it suffices to show that $M\cap L$ is a singleton whenever $L$ is a minimal left ideal. Indeed, suppose $u\neq p\in M\cap L$, with $u$ idempotent. Suppose that $(u, p)$ were proximal, i.e.\ that for some $q\in S(G)$ we have $uq = pq$. Since $M\cap L$ is a group with identity $u$, we must have $pu = p$. Now as $M$ is a minimal right ideal, find $r\in M$ with $uqr = u$. But then $pqr = puqr = pu = p$. This is a contradiction, so $(u,p)$ cannot be proximal.

A $G$-flow $X$ is \emph{distal} if every pair of non-equal points is distal. A useful fact is that $X$ is distal iff $E(X)$ is a group. If $M(G)$ is distal and $M\subseteq S(G)$ is a minimal subflow, then $J(M)$ is a singleton. To see this, note that if $u, v\in J(M)$, then $uv = vv = v$, so $(u,v)$ is a proximal pair. If $u\in J(M)$ is the unique idempotent, then the map $\phi: E(M)\rightarrow M$ given by $p\rightarrow u\cdot p$ is a $G$-flow isomorphism.

For automorphism groups $G$ with $M(G)$ proximal or distal, it follows that the conclusion of Theorem \ref{TwoIdeals} is automatic for \emph{any} two minimal right ideals $M\neq N$. The same argument for $S_\infty$ shows that any two idempotents of the same minimal left ideal cannot be separated by retractions. Of course, we need to know that $S(G)$ contains more than one minimal right ideal; see ([Ba], Corollary 11) for a proof of this fact. 

The following theorem collects some examples of Polish groups $G$ with $M(G)$ proximal.
\vspace{2 mm}

\begin{theorem}
\label{ProximalEllis}
Let $G$ be either $\homeo{2^\omega}$ or the automorphism group of the countably-infinite-dimensional vector space over a finite field. Then $S(G) \not\cong E(M(G))$
\end{theorem}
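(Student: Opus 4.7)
The plan is to reduce everything to the general machinery set up in the preceding paragraphs: once we know that $M(G)$ is proximal, the existence of at least two minimal right ideals in $S(G)$ automatically produces pairs of idempotents that cannot be separated by retractions, which by Pestov's criterion rules out $S(G)\cong E(M(G))$.

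Concretely, the first step is to cite the explicit computations of the universal minimal flow in each case and observe that both flows are proximal. For $G=\homeo{2^\omega}$, Glasner and Weiss identify $M(G)$ with the space of maximal chains of closed subsets of $2^\omega$, and this flow is known to be proximal. For $G=\aut{V_\infty}$, where $V_\infty$ is the countably infinite-dimensional vector space over a fixed finite field, Kechris--Soki\'c identify $M(G)$ via an excellent expansion by ``natural'' orderings, and this universal minimal flow is likewise proximal. I would gather both facts in a single preparatory sentence with references rather than reproving them.

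Next I would invoke the observations from the start of Section 6. Since $M(G)$ is proximal, whenever $M\subseteq S(G)$ is a minimal right ideal, $M\cap L$ is a singleton for every minimal left ideal $L$, so every element of $M$ is idempotent, i.e.\ $J(M)=M$. Given two distinct minimal right ideals $M\neq N$, the product of any $u\in M$ and $v\in N$ lies in $M$ (since $M$ is a right ideal) and is therefore an idempotent; hence $J(M)\cup J(N)=M\cup N$ is a subsemigroup of $S(G)$. Since $G$ is Polish, $S(G)$ does contain at least two distinct minimal right ideals by Corollary~11 of [Ba], so such $M,N$ exist.

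Now I would run the argument at the end of Section 5 verbatim. Pick a minimal left ideal $L\subseteq S(G)$, and let $u\in M\cap L$ and $v\in N\cap L$ be the (unique) idempotents in these groups. Any retraction $\phi:S(G)\to M$ has the form $\phi=\lambda_w$ for some idempotent $w\in M$; then $\phi(u)=wu=u$ because idempotents of $M$ are left identities for $M$, while $\phi(v)=wv$ is again idempotent by the semigroup property of $J(M)\cup J(N)$ and lies in $M\cap L$, forcing $wv=u=\phi(u)$. Hence no retraction onto $M$ separates $u$ from $v$, and by the basepoint-independence of separation by retractions established earlier, no retraction at all separates them. Pestov's criterion [P1] then gives $S(G)\not\cong E(M(G))$.

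The only genuine content here is the input that $M(G)$ is proximal for the two groups in question; everything afterwards is an application of material already proved in the paper. The main obstacle, therefore, is simply locating and citing the proximality results cleanly (and noting that the Polish, non-compact hypotheses needed for Corollary~11 of [Ba] are trivially satisfied in both examples).
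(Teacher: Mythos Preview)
Your proposal is correct and follows essentially the same approach as the paper: the theorem is stated without an explicit proof environment, the argument being precisely the discussion in the surrounding paragraphs of Section~6 (proximality of $M(G)$, hence $J(M)=M$ for every minimal right ideal, together with [Ba], Corollary~11, and the retraction argument from the end of Section~5). The only addition in your write-up is supplying explicit references for the proximality of the two universal minimal flows, which the paper leaves implicit.
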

\vspace{3 mm}

The case where $M(G)$ is distal was considered in \cite{MNT}. They consider Polish groups $G$ with $M(G)$ metrizable which are \emph{strongly amenable}, meaning that there are no non-trivial proximal minimal flows. Clearly any group $G$ with $M(G)$ distal must also be strongly amenable. Using the main result from \cite{BYMT}, the relevant result from \cite{MNT} can be stated as follows.

\begin{theorem*}[\cite{MNT}, Theorem 4.3]
	Let $G$ be a Polish group with $M(G)$ metrizable, and suppose $G$ is strongly amenable. Then there is a short exact sequence $1\to H\to G\to K\to 1$ with $H$ extremely amenable and $K$ compact. Furthermore, $M(G)$ is the natural action of $G$ on $K$.
\end{theorem*}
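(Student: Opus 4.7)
The plan is to combine the structural theorem of [BYMT] with the rigidity imposed by strong amenability. By the main result of [BYMT], metrizability of $M(G)$ guarantees that the $G$-action on $M(G)$ admits a dense $G_\delta$ orbit $Gx_0$ whose stabilizer $H := \mathrm{Stab}_G(x_0)$ is a closed extremely amenable subgroup of $G$. This immediately produces the leftmost term of the desired short exact sequence; it remains to build the compact quotient $K$ and verify exactness.

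The next step is to show that $M(G)$ is equicontinuous. Strong amenability rules out any nontrivial proximal minimal $G$-flow and, in particular, forces the universal minimal proximal $G$-flow (the largest proximal factor of $M(G)$) to be trivial. Combining this with the structure theory of minimal flows---every metrizable minimal flow sits atop a tower of equicontinuous and proximal extensions---one concludes that the tower building $M(G)$ has no nontrivial proximal step, so $M(G)$ itself is equicontinuous. Its enveloping semigroup is then a compact metrizable topological group $K$, and the orbit map at $x_0$ identifies $M(G)$ with $K$, with $x_0$ corresponding to $1_K$. The $G$-action factors through a continuous homomorphism $\pi: G \to K$, and $g \in H \iff g x_0 = x_0 \iff \pi(g) = 1_K$ gives $H = \ker \pi$. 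The orbit $\pi(G) = G x_0$ is a comeager subgroup of the Polish group $K$; as $\pi(G)$ is analytic and comeager in $K$ it is non-meager, hence open by Pettis' theorem, hence clopen and dense, hence all of $K$. Thus $\pi$ is surjective, yielding the exact sequence $1 \to H \to G \xrightarrow{\pi} K \to 1$, and $M(G) = K$ carries precisely the natural $G$-action by left multiplication through $\pi$.

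The main obstacle is the equicontinuity step: passing from the absence of nontrivial proximal minimal $G$-flows to the absence of proximal extensions in the structure tower of $M(G)$. This is not merely formal, since a proximal \emph{extension} inside the tower is a relative condition while strong amenability is a global one. One must argue that a nontrivial proximal extension in the tower would descend to a nontrivial proximal minimal subflow of a suitable fibered product, which strong amenability forbids. A cleaner but essentially equivalent route is to show directly that the proximal relation on $M(G)$ is trivial---any proximal pair $(x,y)$ with $x \neq y$ would generate, via orbit-closure in $M(G) \times M(G)$ and Zorn, a nontrivial proximal minimal $G$-subflow---and then obtain equicontinuity from distality plus metrizability, using that the Ellis semigroup of a metrizable distal minimal flow is a compact Hausdorff topological group acting equicontinuously.
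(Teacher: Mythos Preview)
The paper does not prove this statement at all; it is quoted from \cite{MNT} (in the form obtained after combining with \cite{BYMT}) and stated without proof. So there is no ``paper's own proof'' to compare your sketch against.

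That said, since you have attempted a proof, let me comment on it. The overall architecture is reasonable: invoke \cite{BYMT} to get the extremely amenable stabilizer $H$, argue that $M(G)$ is equicontinuous, identify it with a compact group $K$, and read off the exact sequence. The last paragraph of your sketch correctly identifies the equicontinuity step as the crux, but neither of the two routes you offer actually closes the gap. The ``tower of equicontinuous and proximal extensions'' you invoke is not a standard structure theorem in that form; the Furstenberg structure theorem builds \emph{distal} minimal flows from isometric extensions, and the general PI-tower is considerably more delicate and does not reduce to alternating proximal and equicontinuous steps in the way you suggest. More seriously, your alternative route---that a nontrivial proximal pair $(x,y)$ in $M(G)$ would yield a nontrivial proximal minimal $G$-flow via orbit closure in $M(G)\times M(G)$ and Zorn---is not correct as stated. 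A minimal subflow $Z$ of $\overline{(x,y)\cdot G}$ need not be a proximal flow: its two projections onto $M(G)$ are surjective by minimality of $M(G)$, but nothing forces $Z$ itself to have the property that every pair of its points is proximal. Proximality of the single pair $(x,y)$ does not propagate to proximality of the minimal subflow you extract. Finally, the assertion that a metrizable distal minimal flow is automatically equicontinuous is false in general (distal but non-equicontinuous minimal flows exist already for $\mathbb{Z}$-actions), so even if you succeed in showing $M(G)$ is distal you are not done.

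The actual argument in \cite{MNT} proceeds differently: it exploits the presentation $M(G)\cong \widehat{G/H}$ directly, and uses strong amenability to show that $H$ must be \emph{normal} in $G$ (roughly, a failure of normality produces a nontrivial minimal proximal flow from the space of conjugates of $H$). Once $H\trianglelefteq G$, the quotient $G/H$ is a Polish group whose completion $K=\widehat{G/H}$ is compact, and the identification of $M(G)$ with $K$ is immediate. You would do well to look up that argument rather than trying to push equicontinuity through abstract structure theory.
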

\vspace{5 mm}

\section{Some ultrafilters on $[\omega]^2$}
This last section includes a short discussion of some ultrafilters motivated by the work in section \ref{SInfty}. The first main theorem of this section provides a counterpoint to Theorem \ref{TwoIdeals}. 

\begin{theorem}
	\label{ConsistentIdeal}
	It is consistent with ZFC that there is a minimal subflow $M\subseteq S(G)$ so that if $N\subseteq S(G)$ is a minimal subflow with $J(M)\cup J(N)$ a semigroup, then $M = N$.
\end{theorem}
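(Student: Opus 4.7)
We work with $G = S_\infty$. Combining Theorems~\ref{ProductIdemp} and \ref{PushUltra}, the statement reduces to producing, consistently with ZFC (for concreteness under CH), a thick ultrafilter $p \in \beta[\omega]^2$ with two properties:
\begin{itemize}
\item[(i)] $p$ is weakly $(2,m)$-selective for every $m \geq 3$ in the sense of Definition~\ref{MNSelective}, so that $p$ has a unique coherent extension to a minimal subflow $M$;
\item[(ii)] $p$ is the unique thick ultrafilter on $[\omega]^2$ whose trace on the family $\mathcal{A} := \{A(<_0,<_1) : <_0, <_1 \in \mathrm{LO}(\omega)\}$ equals that of $p$ itself.
\end{itemize}
Given (i) and (ii), any minimal subflow $N$ with $A^N = A^M$ forces $\phi(\mathcal{F}_2^N) = p$ by (ii) and hence $N = M$ by (i).

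For (i), the natural candidate is $p = \lambda q$ for a selective ultrafilter $q$ on $\omega$, which exists under CH. Indeed, a base of $\lambda q$ consists of the clique sets $[X]^2$ with $X \in q$; one checks that $\lambda^{(m-2)} p$ is generated by $\{[X]^m : X \in q\}$ whose unique thick ultrafilter extension is $\lambda^{(m-1)} q$. So $p$ is weakly $(2,m)$-selective for every $m \geq 3$, and the coherent sequence $(\lambda^{(m-1)} q)_{m \geq 2}$ determines a unique minimal subflow $M$.

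Property (ii) is the heart of the construction. The motivating observation is that, for an infinite $X \subseteq \omega$ and a fixed $<_0 \in \mathrm{LO}(\omega)$,
\[
[X]^2 = \bigcap\{A(<_0,<_1) : <_1|_X = <_0|_X\},
\]
since on pairs inside $X$ every such $<_1$ is forced to agree with $<_0$, while on other pairs one can choose $<_1$ to create a disagreement. Thus each basic set $[X]^2$ of $p$ is formally recoverable from the order-agreement sets lying in $p$, so (ii) is natural provided enough genericity is available. The \textbf{main obstacle} is that thick ultrafilters are not closed under uncountable intersections, so a second thick ultrafilter $p' \neq p$ could in principle contain every $A(<_0,<_1)\in p$ while failing to contain some $[X]^2$ with $X\in q$.

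To block this I would interleave the usual selective construction of $q$ with a transfinite diagonalization of length $\omega_1$. Using CH, enumerate triples $(S_\alpha, <_0^\alpha, X_\alpha)_{\alpha<\omega_1}$ representing potential witnesses to an alternate extension $p'$. At stage $\alpha$, the next set put into $q$ is chosen both to preserve the Ramsey/selectivity property of $q$ and to rule out the triple $(S_\alpha,<_0^\alpha,X_\alpha)$ as a genuine witness, for instance by arranging that within the chosen $Y\in q$ both $S_\alpha \cap [Y]^2$ and its complement in $[Y]^2$ are governed in a controlled way by an order-agreement set of the form $A(<_0^\alpha, <_1)$ already committed into $p$. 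Showing that these two demands on $q$ are jointly satisfiable at every stage—selectivity on the one hand, rigidity against $\mathcal{A}$-indistinguishable alternatives on the other—is the technical heart of the proof and the step I expect to require the most care.
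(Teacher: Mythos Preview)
Your reduction via Theorems~\ref{ProductIdemp} and \ref{PushUltra} is correct, and property (i) for $p=\lambda q$ with $q$ selective is fine: selectivity gives $\lambda^{(m-1)}q$ an ultrafilter for every $m$, so $p$ is even $(2,m)$-selective. The problem is entirely in (ii), and here your choice $p=\lambda q$ works against you. Observe that no set of the form $[X]^2$ with $X$ infinite and coinfinite can equal a single $A(<_0,<_1)$: if $X$ contains $a<_0 c$ and $b\in\omega\setminus X$ lies $<_0$-between them, then agreement on $\{a,c\}$ together with disagreement on $\{a,b\}$ and $\{b,c\}$ forces $c<_1 b<_1 a$ yet $a<_1 c$, a contradiction. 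Hence your base for $p$ consists entirely of sets that are \emph{not} order-agreement sets, so $p$ cannot be a linear order ultrafilter in the strong sense; you are forced to aim for the weak version. Your diagonalization sketch does not explain what the triples $(S_\alpha,<_0^\alpha,X_\alpha)$ encode, nor why at each stage one can simultaneously refine for selectivity and kill the $\alpha$-th potential competitor. As stated this is a genuine gap, not a routine bookkeeping issue.

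The paper avoids this difficulty by not using $\lambda q$ at all. It forces (over any model, with a countably closed poset) a thick ultrafilter on $[\omega]^2$ whose basic sets have the form $\tilde A=\bigcup_n [A\cap E_n]^2$, a disjoint union of cliques on \emph{finite} blocks $E_n$. For such a set one can write down a single pair of linear orders with $A(<_0,<_1)=\tilde A$: make $<_0,<_1$ disagree completely on $\omega\setminus A$, and add each finite block $A\cap E_n$ with $<_0=<_1$ on it, placing the block $<_0$-below and $<_1$-above everything built so far. Thus the generic ultrafilter is a linear order ultrafilter outright, and (ii) is immediate with no diagonalization needed. The finite-block structure is exactly what circumvents the obstruction above.
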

\vspace{2 mm}

The second theorem points out a key difference between selective ultrafilters and $(2,3)$-selective ultrafilters (recall Definition \ref{MNSelective}). Recall that if $p, q\in \beta \omega$, then we say that $q\geq_{RK} p$ if there is a function $f: \omega\rightarrow \omega$ with $f(q) = p$. Another characterization of selective ultrafilters is that they are exactly the ultrafilters which are minimal in the Rudin-Keisler order (see \cite{Bo}). The next theorem shows that $(2,3)$-selectives can be very far from Rudin-Keisler minimal.
\vspace{2 mm}

\begin{theorem}
	\label{ConsistentRK}
	If $p\in \beta \omega$, there is a countably closed forcing extension $\mathbb{P}$ adding a $(2,3)$-selective ultrafilter $q$ with $q\geq_{RK} p$.
\end{theorem}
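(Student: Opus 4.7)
The plan is to pass to a countably closed forcing extension in which the Continuum Hypothesis holds, and then construct $q$ directly by transfinite recursion. Take $\mathbb{P}=\mathrm{Col}(\omega_1,2^{\aleph_0})$, which is countably closed, adds no new reals, and forces $\mathrm{CH}$; so $p$ remains an ultrafilter on $\omega$ in $V^\mathbb{P}$ and no new subsets of $[\omega]^2$ or $[\omega]^3$ appear. Fix the Rudin--Keisler map $\phi\colon[\omega]^2\to\omega$ given by $\phi(\{a,b\})=\min(a,b)$. Since each $A\in p$ is infinite, any finite $\{a_0<\dots<a_{n-1}\}$ with $a_0,\dots,a_{n-2}\in A$ satisfies $[\{a_0,\dots,a_{n-1}\}]^2\subseteq\phi^{-1}(A)$, so $\phi^{-1}(A)$ is thick; since $\phi^{-1}(A)\cap\phi^{-1}(B)=\phi^{-1}(A\cap B)$, the family $\{\phi^{-1}(A):A\in p\}$ generates a thick filter on $[\omega]^2$. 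Our aim is to extend this filter to a thick ultrafilter $q$ that is $(2,3)$-selective; then $\phi(q)=p$ witnesses $q\geq_{RK}p$.

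Working in $V^\mathbb{P}$, $\mathrm{CH}$ lets us enumerate in order type $\omega_1$ the sets $\mathcal{P}([\omega]^2)=\{S_\alpha:\alpha<\omega_1\}$, $p=\{A_\alpha:\alpha<\omega_1\}$, and $2^{[\omega]^3}=\{c_\alpha:\alpha<\omega_1\}$. Construct a $\subseteq$-increasing chain of countably generated thick filters $\mathcal{F}_\alpha$ on $[\omega]^2$, with $\mathcal{F}_0$ generated by $\phi^{-1}(A_0)$, $\mathcal{F}_\alpha=\bigcup_{\beta<\alpha}\mathcal{F}_\beta$ at limits, and at each successor stage $\alpha+1$ arranging: (i) $\phi^{-1}(A_\alpha)\in\mathcal{F}_{\alpha+1}$; (ii) exactly one of $S_\alpha,[\omega]^2\setminus S_\alpha$ lies in $\mathcal{F}_{\alpha+1}$; and (iii) some $T\in\mathcal{F}_{\alpha+1}$ has $\lambda T$ monochromatic for $c_\alpha$. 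Setting $q:=\bigcup_{\alpha<\omega_1}\mathcal{F}_\alpha$ then yields a thick ultrafilter (by (ii)) which is $(2,3)$-selective (by (iii)) and satisfies $\phi(q)=p$ (by (i)).

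The crucial technical step is a Ramsey-type lemma underlying clause (iii): given a countably generated thick filter $\mathcal{F}$ on $[\omega]^2$ with decreasing base $G_1\supseteq G_2\supseteq\cdots$ and a coloring $c\colon[\omega]^3\to 2$, there exists a thick $T\subseteq[\omega]^2$ with $T\cap G_i$ thick for every $i$ and $\lambda T$ $c$-monochromatic. For each $i$, since $G_i$ is thick it contains cliques of every finite size, so the finite Ramsey theorem yields a finite clique $K_i\subseteq G_i$ of size at least $i$, vertex-disjoint from $K_1,\dots,K_{i-1}$, with all its triangles of some color $\epsilon_i\in\{0,1\}$; by pigeonhole, pass to an infinite subsequence on which $\epsilon_i$ is constant and set $T=\bigsqcup_i K_i$. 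Every triangle of $T$ lies within a single $K_i$, giving the monochromatic condition, while $T\cap G_i\supseteq\bigcup_{j\geq i}K_j$ is thick because the $K_j$ grow in size. Clause (ii) reduces to a short separate argument: if neither $S_\alpha$ nor its complement were thickness-compatible with $\mathcal{F}$, then some $G\in\mathcal{F}$ would decompose as a union of two non-thick pieces, contradicting the Ramsey-theoretic fact that thickness passes through finite unions.

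The main obstacle is the Ramsey lemma above: one must produce $T$ whose triangles are all $c$-monochromatic while simultaneously preserving thickness against the \emph{entire} countable filter base. Because a thick filter need not admit any pseudo-intersection containing an infinite clique, one cannot apply infinite Ramsey to a single set; the diagonal extraction of growing finite mono-triangle cliques from successively deeper base members is precisely what makes the construction go through.
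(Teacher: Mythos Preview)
There is a genuine gap in the interaction between your clauses (i) and (iii). The set $T=\bigsqcup_i [V_i]^2$ produced by your Ramsey lemma at stage $\alpha+1$ is only guaranteed compatible with the countably generated filter built so far, in particular with $\phi^{-1}(A_\beta)$ for $\beta\le\alpha$. At later stages you must add $\phi^{-1}(A_\gamma)$ for $\gamma>\alpha$, and nothing you have arranged prevents $T\cap\phi^{-1}(A_\gamma)$ from being thin or even empty. Concretely: the vertex set $B:=\bigcup_i V_i$ is chosen with no reference to $p$ beyond the finitely many $A_\beta$ already processed, so there is no reason for $B\in p$. If $B\notin p$, take $A:=\omega\setminus B\in p$; every edge of $T$ has both endpoints in $B$, hence $\min$-value in $B$, so $T\cap\phi^{-1}(A)=\emptyset$. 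When this $A$ appears as some $A_\gamma$, clause (i) at stage $\gamma+1$ cannot be carried out. (A parallel failure can hit clause (ii): if some $S_\alpha$ happens to equal $\phi^{-1}(C)$ with $C\notin p$ but compatible with the filter built so far, you may commit to $S_\alpha$ and then be unable to add $\phi^{-1}(\omega\setminus C)$ later.)

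The root problem is that your diagonal Ramsey lemma genuinely needs a countable base, while the family $\{\phi^{-1}(A):A\in p\}$ you must preserve is uncountable and admits no countable subbase. The paper avoids this by hard-wiring $p$ into the forcing: fix a partition $\omega=\bigsqcup_n E_n$ with $|E_n|=n$, take as conditions those $A\subseteq\omega$ with $\{n:|A\cap E_n|\ge k\}\in p$ for every $k$, and let the generic ultrafilter on $[\omega]^2$ be generated by the sets $\tilde A=\bigcup_n [A\cap E_n]^2$. All Ramsey refinements are carried out \emph{block by block} inside each $E_n$, which automatically preserves the ``$p$-many large blocks'' condition, and the Rudin--Keisler map $\psi(\{x,y\})=n$ for $\{x,y\}\subseteq E_n$ then witnesses $q\ge_{RK}p$. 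Your choice $\phi(\{a,b\})=\min(a,b)$ does not cohere with block-structured monochromatic sets; to repair the argument you would essentially need to rediscover the paper's block construction.
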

\vspace{2 mm}

As it turns out, these two theorems will both be proven using the same forcing construction. We define a forcing $\mathbb{P}$ which is very similar to a forcing defined by Laflamme [L]. A slightly more straightforward forcing would suffice for Theorem \ref{ConsistentIdeal} where we don't refer to a fixed $p\in \beta \omega$, but with a bit more work, we can prove both theorems.
\vspace{2 mm}

\begin{defin}
	Fix $p\in \beta \omega$. Write $\omega = \bigsqcup_n E_n$ with $|E_n| = n$. We define $\mathbb{P} = \langle P, \leq\rangle$ as follows.
	\vspace{1 mm}
	
	\begin{enumerate}
		\item 
		A condition $A\in P$ is a subset of $\omega$ so that for every $k< \omega$, we have $\{n< \omega: |A\cap E_n|\geq k\}\in p$.
		\vspace{2 mm}
		
		\item 
		We declare that $B\leq A$ iff $B\subseteq A$.
		
	\end{enumerate}
\end{defin}
\vspace{2 mm}

If $A, B\in \mathbb{P}$, we define $B\preceq A$ iff there is $k< \omega$ so that $\{m< \omega: |E_m\cap (B\setminus A)|\leq k\}\in p$. It is straightforward to see that $\langle P, \preceq\rangle$ is a separative pre-order which is equivalent to $\mathbb{P}$.
\vspace{2 mm}

\begin{lemma}
	$\mathbb{P}$ is countably closed.
\end{lemma}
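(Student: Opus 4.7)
The plan is to exploit the forcing equivalence between $\langle P,\leq\rangle$ and $\langle P,\preceq\rangle$ noted just above, and verify the countable closure in the separative order $\preceq$. Given a $\preceq$-decreasing sequence $A_0 \succeq A_1 \succeq A_2 \succeq \cdots$, the goal is to produce $B \in P$ with $B \preceq A_n$ for every $n$. The strategy has two stages: first I would straighten the sequence to be $\subseteq$-decreasing modulo equivalence, and then I would fuse the straightened sequence through a diagonal choice on the blocks $E_m$.

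For the straightening step, set $A'_0 := A_0$ and inductively $A'_{n+1} := A_{n+1} \cap A'_n$. Assuming $A'_n \approx A_n$ in $\preceq$, the relation $A_{n+1}\preceq A_n \approx A'_n$ gives $k\in\omega$ and $X\in p$ with $|E_m \cap (A_{n+1}\setminus A'_n)|\leq k$ for all $m\in X$. Intersecting $X$ with the $p$-large sets witnessing $A_{n+1}\in P$ shows that for each $j$ the set $\{m : |A'_{n+1}\cap E_m|\geq j\}$ lies in $p$, so $A'_{n+1}\in P$; the same bound gives $A_{n+1}\preceq A'_{n+1}$, while $A'_{n+1}\preceq A_{n+1}$ is immediate from $A'_{n+1}\subseteq A_{n+1}$. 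So one may assume from the outset that $A_0\supseteq A_1\supseteq A_2\supseteq\cdots$ as subsets of $\omega$.

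For the fusion step, let $Y_n := \{m : |A_n\cap E_m|\geq n\}\in p$ and $Z_n := Y_0\cap\cdots\cap Y_n\in p$. Since each $E_m$ is finite, $m\in Z_n$ for all $n$ would force $|E_m|=\infty$, hence $\bigcap_n Z_n=\emptyset$, and so $N(m):=\max\{n : m\in Z_n\}$ is a well-defined natural number for every $m\in Z_0$. Define $B\cap E_m := A_{N(m)}\cap E_m$ for $m\in Z_0$ and $B\cap E_m := \emptyset$ otherwise. Then for $m\in Z_k$ one has $N(m)\geq k$, so $|B\cap E_m|=|A_{N(m)}\cap E_m|\geq N(m)\geq k$, which shows $\{m:|B\cap E_m|\geq k\}\supseteq Z_k\in p$ and hence $B\in P$. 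Moreover, for $m\in Z_k$ we have $A_{N(m)}\subseteq A_k$ and therefore $(B\setminus A_k)\cap E_m=\emptyset$, so $B\preceq A_k$ is witnessed with constant $0$ on the $p$-large set $Z_k$.

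The hard part of the argument is reconciling two opposing pressures: the order $\preceq$ allows bounded per-block errors on a $p$-large set, but membership in $P$ demands that $|A\cap E_m|$ be \emph{unboundedly} large on $p$-large sets. The straightening reduces the problem to a $\subseteq$-chain where errors are trivial, and the choice of threshold $|A_n\cap E_m|\geq n$ in the definition of $Y_n$ is exactly what lets the fusion simultaneously track arbitrary $A_k$ from below (via $Z_k\in p$) while still forcing $B$ to grow on enough blocks to lie in $P$. The remaining verifications are then direct.
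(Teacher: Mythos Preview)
Your proof is correct and is essentially the same as the paper's. The only cosmetic difference is that you form $Z_n = Y_0\cap\cdots\cap Y_n$, but once the $A_n$ are $\subseteq$-decreasing the sets $Y_n$ are already nested (if $|A_{n+1}\cap E_m|\geq n+1$ then $|A_n\cap E_m|\geq n+1>n$), so in fact $Z_n = Y_n = S(n,n)$ in the paper's notation, and your $N(m)$ coincides with the index $m$ determined by the paper's partition $T_m = S(m,m)\setminus S(m+1,m+1)$.
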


\begin{proof}
	First notice that if $\langle A_n: n< \omega\rangle$ is a $\preceq$-decreasing sequence in $P$, then setting $A_n' = \bigcap_{i\leq n} A_i$, we have that $A_n'$ is $\preceq$-equivalent to $A_n$. So we may freely work with $\leq$-decreasing sequences.
	
	Suppose $\langle A_n: n<\omega\rangle$ is a $\leq$-decreasing sequence in $P$. Write $S(m,k) = \{n<\omega: |A_m\cap E_n|\geq k\}$. Note that $S(m,k)\in p$ for every $m, k < \omega$. Also, if $m\leq m'$ and $k\leq k'$, then $S(m',k')\subseteq S(m,k)$.
	
	For $m\geq 1$, we define $T_m = S(m,m)\setminus S(m+1, m+1)$. Note that if $m<\omega$, then $\bigcup_{n\geq m} T_m = S(m,m)$. If $m\geq 1$ and $n\in T_m$, then $|A_m\cap E_n|\geq m$. We form $B\in P$ by setting 
	$$B = \bigcup_{m\geq 1} \bigcup_{n\in T_m} A_m\cap E_n.$$
	For each $m\geq 1$, we have $\{n< \omega: |B\cap E_n|\geq m\} = S(m,m)\in p$, so $B\in p$. To see that $B\preceq A_m$, we note that $\{n<\omega: B\not\subseteq A_m\}\subseteq \omega\setminus S(m,m)$.
\end{proof}
\vspace{3 mm}

If $A\in \mathbb{P}$, we set $\tilde{A} = \bigcup_n [A\cap E_n]^2\subseteq [\omega]^2$. The next proposition will prove Theorem \ref{ConsistentRK}.

\begin{prop}
	Let $G\subseteq \mathbb{P}$ be generic. Then $\tilde{G}:= \{\tilde{A}: A\in G\}$ generates a thick ultrafilter on $[\omega]^2$ which is $(2,n)$-selective for every $n$. Furthermore, this ultrafilter is $RK$-above $p$.
\end{prop}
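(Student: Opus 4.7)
The plan is to prove in sequence that $\tilde G$ generates (i) a thick filter, (ii) in fact a $(2,n)$-selective ultrafilter for every $n \geq 2$, and (iii) one that is Rudin--Keisler above $p$. Parts (i) and (ii) will be handled together by one density lemma combining Ramsey's theorem with the ultrafilter property of $p$; part (iii) uses a simpler density argument.

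To start, I would observe that for $A, B \in \mathbb P$, every $\{a, b\} \in [\omega]^2$ lies in at most one $[E_m]^2$, so $\tilde A \cap \tilde B = \widetilde{A \cap B}$; combined with the fact that any two elements of $G$ have a common lower bound in $G$, this gives the finite intersection property. Each $\tilde A$ is thick: for every $k$, the set $\{m : |A \cap E_m| \geq k\}$ lies in $p$ and so is nonempty, and $A \cap E_m$ is then a clique of size $\geq k$ in $\tilde A$.

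The main step is the following density claim: for every $n \geq 2$, every $S \subseteq [\omega]^n$, and every $A \in \mathbb P$, there is $B \leq A$ in $\mathbb P$ with either $\lambda^{(n-2)}\tilde B \subseteq S$ or $\lambda^{(n-2)}\tilde B \cap S = \emptyset$. For each $m$ with $|A \cap E_m| \geq n$, apply Ramsey's theorem for $n$-uniform hypergraphs to the $2$-coloring of $[A \cap E_m]^n$ coming from $S$, obtaining a monochromatic subset $B_m \subseteq A \cap E_m$ of color $c_m \in 2$ and of size $f_n(|A \cap E_m|)$, where $f_n$ is the nondecreasing, infinity-tending inverse of the Ramsey number $R^n(k)$. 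Extending $c$ arbitrarily to the remaining $m$, the ultrafilter property of $p$ gives a color $i$ with $X := c^{-1}(i) \in p$. Then $B := \bigcup_{m \in X} B_m$ satisfies $B \in \mathbb P$ since $\{m : |B \cap E_m| \geq k\} \supseteq X \cap \{m : |A \cap E_m| \geq R^n(k)\}$, an intersection of two $p$-sets. A direct check yields $\lambda^{(n-2)}\tilde B = \bigcup_{m \in X} [B_m]^n$, which by choice of color is contained in or disjoint from $S$. Since the set of such $B$ is dense below $A$, genericity of $G$ shows that the filter generated by $\lambda^{(n-2)}\tilde G$ decides every subset of $[\omega]^n$, establishing both (i) (taking $n = 2$) and (ii). The main obstacle is that the block-wise Ramsey thinning must still produce a condition; this is exactly what the $p$-density built into the definition of $\mathbb P$ buys us.

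For (iii), fix a bijection $[\omega]^2 \leftrightarrow \omega$ and define $f : [\omega]^2 \to \omega$ by $f(\{a,b\}) = m$ when $\{a,b\} \subseteq E_m$ and $f(\{a,b\}) = 0$ otherwise. Given $X \in p$, the set $D_X := \{A \in \mathbb P : A \subseteq \bigcup_{m \in X} E_m\}$ is dense: for any $C \in \mathbb P$, the set $C \cap \bigcup_{m \in X} E_m$ is in $\mathbb P$ because $\{m : |C \cap E_m| \geq k\} \cap X \in p$. Genericity yields $A \in G$ with $\tilde A \subseteq \bigcup_{m \in X} [E_m]^2 \subseteq f^{-1}(X)$, so $f^{-1}(X) \in \tilde G$. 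When $X \notin p$, the analogous argument applied to $\omega \setminus X \in p$ produces $A \in G$ with $\tilde A \cap f^{-1}(X) = \emptyset$. Hence $f(\tilde G) = p$, which is exactly the assertion that $\tilde G \geq_{RK} p$.
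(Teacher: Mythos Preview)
Your proof is correct and follows essentially the same approach as the paper's: block-wise application of Ramsey's theorem inside each $A\cap E_m$ to extract a monochromatic subset, then use that $p$ is an ultrafilter to select one color and assemble the resulting $B\in\mathbb{P}$; the Rudin--Keisler witness is likewise the map sending $\{a,b\}\subseteq E_m$ to $m$. Your write-up is somewhat more careful about the bookkeeping (finite intersection property, the density sets $D_X$), but the key ideas coincide.
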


\begin{proof}
	Set $E_2 = \widetilde{1_{\mathbb{P}}}$, and suppose $E_2 = S\sqcup T$. Let $A\in p$. By Ramsey's theorem, there is some non-decreasing function $k\rightarrow b(2,k)$ increasing to infinity so that any $2$-coloring of the complete graph on $k$ vertices has a monochromatic clique of size $b(2,k)$. If $|A\cap E_N| = k$, then let $X_N\subseteq A\cap E_N$ be chosen so that $|X_N| = b(2,k)$ and $\tilde{X_N}\subseteq S$ or $\tilde{X_N}\subseteq T$. Define $S', T'\subseteq \omega$, placing $N\in S'$ or $N\in T'$ depending on which outcome happens. WLOG suppose $S'\in p$. Then letting $X = \bigcup_{N\in S'} X_N$, we have $X\in \mathbb{P}$, $X\leq A$, and $X$ decides whether $S$ or $T$ is in the filter generated by $\tilde{G}$.
	
	The argument that the ultrafilter generated by $\tilde{G}$ is $(2,n)$-selective is almost the exact same. By Ramsey's theorem, there is some non-decreasing function $k\rightarrow b(n,k)$ increasing to infinity so that any $2$-coloring of the complete $n$-uniform hypergraph on $k$-vertices has a monochromatic clique of size $b(n,k)$. Now letting $E_n = \lambda^{(n-2)}(E_2)$, fix a partition $E_n = S\sqcup T$. If $A\in \mathbb{P}$, we can in a similar fashion find $X\leq A$ deciding whether $S$ or $T$ is in the filter $\lambda^{(n-2)}(\tilde{G})$.
	
	Lastly, let $\psi: E_2\rightarrow \omega$ be so that $\psi(\{x,y\}) = n$ iff $\{x,y\}\subseteq E_n$. Then if $\mathcal{U}\in V[G]$ is the ultrafilter generated by $\tilde{G}$, then $\psi(\mathcal{U}) = p$.
\end{proof}

We now turn towards the proof of Theorem \ref{ConsistentIdeal}. To do this, we use Theorem \ref{ProductIdemp}. Working in $V[G]$, let $M_G\subseteq S(S_\infty)$ be the unique minimal subflow so that $\phi(\mathcal{F}_2^{M_G})$ is the ultrafilter generated by $\tilde{G}$. We need to show that $\{A(<_0, <_1): (<_0, <_1)\in A^{M_G}\}$ generates $\tilde{G}$. To see why this is, fix $A\in \mathbb{P}$. We may assume that if $A\cap E_n\neq\emptyset$, then $|A\cap E_n|\geq 2$. We will construct linear orders $<_0$ and $<_1$ so that $A(<_0, <_1) = \tilde{A}$. 

First write $\omega = \bigcup_n X_n$, where $X_0 = \omega\setminus A$ and $X_n = A\cap E_n$. Some of the $X_n$ may be empty, but this is fine. First define $<_0$ and $<_1$ on $X_0$ to be any linear orders which completely disagree. Suppose $<_0$ and $<_1$ have been defined on $X_0\cupdots X_{n-1}$. First define $<_0$ and $<_1$ on $X_n$ so that they agree. Now place $X_n$ $<_0$-below everything built so far and also $<_1$-above everything built so far. Then $A(<_0, <_1) = \tilde{A}$ as desired. This completes the proof of Theorem \ref{ConsistentIdeal}.
\vspace{5 mm}

The proof of Theorem \ref{ConsistentIdeal} suggests another type of ultrafilter on $[\omega]^2$ we can define. If $p\in \beta [\omega]^2$ is thick, define $A^p = \{(<_0, <_1): A(<_0, <_1)\in p\}$. As we saw in section \ref{SInfty}, $A^p$ is an equivalence relation on $\mathrm{LO}(\omega)$.

\begin{defin}
	\label{LOUlt}
	Let $p\in \beta [\omega]^2$ be a thick ultrafilter. We call $p$ a \emph{linear order ultrafilter} if $\{A(<_0, <_1): (<_0, <_1)\in A^p\}$ generates $p$. Call $p$ a \emph{weak linear order ultrafilter} if $p$ is the unique thick ultrafilter containing every $A(<_0, <_1)$ with $(<_0, <_1)\in p$.
\end{defin}

One can prove that there are thick ultrafilters $p\in \beta [\omega]^2$ which are not weak linear order ultrafilters, providing an alternate proof of Theorem \ref{TwoIdeals}. The proof is very similar to the proof that some $p\in \beta [\omega]^2$ is not weakly $(2,3)$-selective.
\vspace{3 mm}

We end with some open question about these ultrafilters.
\vspace{2 mm}

\begin{que}
	Does ZFC prove the existence of $(2,3)$-selective ultrafilters? Of linear order ultrafilters?
\end{que}
\vspace{1 mm}

\begin{que}
	Can there exist a weakly $(2,3)$-selective ultrafilter which is not $(2,3)$-selective? Same question for linear order ultrafilters.
\end{que}

The last question is motivated by Theorem \ref{ConsistentRK}. This shows that $(2,3)$-selective ultrafilters can exist arbitrarily high up in the Rudin-Keisler order.

\begin{que}
	Is it consistent with ZFC that the $(2,3)$-selective ultrafilters are upwards Rudin-Keisler cofinal?
\end{que}

\end{document}